\newcommand{\Mf}{\mathfrak{M}}
\newcommand{\R}{\mathbb{R}}
\newcommand{\N}{\mathbb{N}}
\newcommand{\Z}{\mathbb{Z}}
\newcommand{\XtoYGa}{$(X,\Ga)\overset\pi\to(Y,\Ga)$\ }
\newcommand{\al}{\alpha}
\newcommand{\Ga}{\Gamma}
\newcommand{\ga}{\gamma}
\newcommand{\del}{\delta}
\newcommand{\Del}{\Delta}
\newcommand{\ep}{\epsilon}
\newcommand{\sig}{\sigma}
\newcommand{\la}{\lambda}
\newcommand{\La}{\Lambda}
\newcommand{\tet}{\theta}
\newcommand{\om}{\omega}
\newcommand{\ka}{\kappa}
\newcommand{\XGa}{$(X,\Ga)\ $}
\newcommand{\OGa}{\mathcal{O}_{\Ga}}
\newcommand{\OCGa}{ {\ol{\mathcal{O}}}_{\Ga} }
\newcommand{\br}{\vspace{4 mm}}
\newcommand{\imp}{\ \Rightarrow\ }
\newcommand{\rest}{\upharpoonright}
\newcommand{\ol}{\overline}
\newcommand{\ch}{\mathbf{1}}
\newcommand{\cls}{\rm{cls\,}}
\newcommand{\Aut}{\rm{Aut\,}}
\newcommand{\gr}{\rm{graph\,}}
\newcommand{\diam}{\rm{diam\,}}
\newcommand{\supp}{\rm{supp\,}}
\newcommand{\card}{\rm{card\,}}
\newcommand{\Homeo}{\rm{Homeo\,}}
\theoremstyle{plain}
\newtheorem{thm}{Theorem}[section]
\newtheorem{cor}[thm]{Corollary}
\newtheorem{lem}[thm]{Lemma}
\newtheorem{prop}[thm]{Proposition}
\theoremstyle{definition}
\newtheorem{defn}[thm]{Definition}
\newtheorem{rmk}[thm]{Remark}
\newtheorem{prob}[thm]{Problem}
\numberwithin{equation}{section}
\begin{document}
\title
[Tame minimal dynamical systems]{The structure of tame minimal
dynamical systems for general groups}

\author[Eli Glasner]{Eli Glasner}
\address{Department of Mathematics,
Tel-Aviv University, Tel Aviv, Israel}
\email{glasner@math.tau.ac.il}

\date{June, 2016}

\begin{abstract}
We use the structure theory of minimal dynamical systems
to show that, for a general group $\Ga$,
a tame, metric, minimal dynamical system $(X, \Ga)$ has the following structure:
\begin{equation*}
\xymatrix
{
& \tilde{X} \ar[dd]_\pi  \ar[dl]_\eta & X^* \ar[l]_-{\tet^*} \ar[d]^{\iota} \ar@/^2pc/@{>}^{\pi^*}[dd]\\
X & & Z \ar[d]^\sig\\
& Y & Y^* \ar[l]^\tet
}
\end{equation*}
Here (i) $\tilde{X}$ is a metric minimal and tame system
(ii) $\eta$ is a strongly proximal extension, (iii) $Y$ is a strongly proximal system,
(iv) $\pi$ is a point distal and RIM extension with unique section, (v) $\tet$, $\tet^*$ and $\iota$
are almost one-to-one extensions, and (vi) $\sig$ is an isometric
extension. 
%
%

When the map $\pi$ is also open this diagram reduces to
\begin{equation*}
\xymatrix
{
& \tilde{X}  \ar[dl]_\eta  \ar[d]^{\iota} \ar@/^2pc/@{>}^\pi[dd]\\
X &  Z \ar[d]^\sig\\
& Y
}
\end{equation*}

In general the presence of the strongly proximal extension $\eta$ is unavoidable.
If the system $(X, \Ga)$ admits an invariant measure $\mu$ then 
$Y$ is trivial and $X = \tilde{X}$ is an almost automorphic system;
i.e. $X \overset{\iota}{\to} Z$, where $\iota$ is an almost one-to-one 
extension and $Z$ is equicontinuous. 
Moreover, $\mu$ is unique and $\iota$ is a measure theoretical isomorphism
$\iota : (X,\mu, \Ga) \to (Z, \la, \Ga)$, with $\la$ the Haar measure on $Z$.
Thus, this is always the case when $\Ga$ is amenable.
\end{abstract}

\thanks{{\em 2000 Mathematical Subject Classification
54H20}}

\thanks{This research was supported by a grant of the Israel Science Foundation (ISF 668/13)}

\keywords{Enveloping semigroup, 
tame system, strong proximality, almost automorphic system,
relatively invariant measure (RIM)}

\maketitle

\tableofcontents

\section*{Introduction}
In this work we prove a structure theorem for tame, metric, minimal dynamical systems 
for an arbitrary acting group, Theorem \ref{main} and Corollary \ref{cor}. 
This generalizes an older result which delt with
the case of abelian acting group (\cite{H}, \cite{KL} and \cite{Gmt}).
It turns out that the key property needed for the simple form of the structure
theorem is amenability, rather than commutativity. 
In particular Corollary \ref{cor} asserts that, when the acting group 
is amenable, every metric minimal tame system is almost automorphic.
When the group is
not amenable the structure of a minimal tame system is more complicated
and may involve a nontrivial strongly proximal factor.

The method of proof we use is basically the same one we applied in
\cite{Gmt} and consists mainly in applying various structure theorems
for general minimal systems. 
After some efforts to write our proof as a commentary on \cite{Gmt} with 
indications where new ideas and results appear, we found that
the best way (for the reader as well as for the author) to present the new results is to start anew, 
although for some minor points
we nevertheless refer to the older work.
For completeness' sake and in order to provide the reader with the appropriate
background I also reproduce the review section, with the
necessary changes.

\br

A dynamical system is a pair $(X,\Ga)$,
where $X$ is a compact Hausdorff space and $\Ga$ an infinite
abstract group acting as a group of homeomorphisms of the space $X$.
That is, we are given a homomorphism (not necessarily an
isomorphism) of $\Ga$ into $\Homeo(X)$.
For $\ga\in \Ga$ and $x\in X$ we write $\ga x$ for the image
of $x$ under the homeomorphism which corresponds to $\ga$.
We will often abuse this notation and consider $\ga$ as
a homeomorphism of $X$.

The {\em enveloping semigroup} $E(X,\Ga)$ of the dynamical system \XGa
is defined as the closure of image of $\Ga$ in the product space $X^X$.
It is not hard to check that, under composition of maps,
$E(X,\Ga)$ is a compact {\em right topological semigroup} , i.e.
for each $q\in E(X,\Ga)$ the map $R_q:p \mapsto pq$ is continuous.
In fact the canonical homomorphism of $\Ga$ into $E(X,\Ga)$ is a
{\em right topological semigroup compactification} of $\Ga$; i.e.
it has a dense range and for each $\ga \in \Ga$
multiplication on the left $L_\ga: p \mapsto \ga p$
is continuous on $E(X,\Ga)$. This left multiplication by elements of $\Ga$
makes $(E(X,\Ga),\Ga)$ a dynamical system.

The enveloping semigroup was introduced by Robert Ellis in 1960 and
became an indispensable tool in abstract topological
dynamics. However explicit computations of enveloping
semigroups are quite rare.
One reason for this is that often $E(X,\Gamma)$ is non-metrizable.

Following a pioneering work of A. K\"{o}ller, \cite{Ko},
Glasner and Megrelishvily proved the
following dynamical version of the
Bourgain-Fremlin-Talagrand dichotomy theorem, \cite{G}, \cite{GM}.

\begin{thm}[A dynamical BFT dichotomy]
Let $(X,\Gamma)$ be a metric dynamical system and let $E(X, \Gamma)$
be its enveloping semigroup. We have the following dichotomy.
Either
\begin{enumerate}
\item
$E(X, \Gamma)$ is a separable Fr\'{e}chet  compact space, hence with cardinality
${\card}\ {E(X, \Gamma)} \leq 2^{\aleph_0}$; or
\item
the compact space $E$ contains a homeomorphic
copy of $\beta\mathbb{N}$, hence ${\card}\ {E(X, \Gamma)} = 2^{2^{\aleph_0}}$.
\end{enumerate}
\end{thm}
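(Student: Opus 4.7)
The plan is to reduce the theorem to the classical Bourgain--Fremlin--Talagrand (BFT) dichotomy for pointwise compact families of Baire class $1$ functions on a Polish space, transferring the result from $C(X)^*$-style function spaces to the enveloping semigroup $E=E(X,\Gamma)\subset X^X$.

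First I would linearize: since $X$ is compact metric, $C(X)$ is separable, so fix a countable dense sequence $\{f_n\}_{n\geq 1}\subset C(X)$ that separates points of $X$. Each $f_n$ induces a continuous evaluation map $\hat{f}_n: E\to \R^X$ sending $p\mapsto f_n\circ p$, where $\R^X$ carries the pointwise topology. The product map $\Phi=\prod_n \hat{f}_n : E\to\prod_n \R^X$ is continuous, and because the $f_n$ separate points and the topology on $E\subset X^X$ is the pointwise one, $\Phi$ is a topological embedding. The image $F_n:=\hat{f}_n(E)$ is the pointwise closure of the $\Gamma$-orbit $\{f_n\circ\gamma:\gamma\in\Gamma\}$ inside $\R^X$, a pointwise compact set of uniformly bounded real functions on the Polish space $X$.

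Next I would feed each $F_n$ into the classical BFT machinery. BFT asserts that for any pointwise compact subset $F$ of the Baire class $1$ functions $B_1(X)$ on a Polish space $X$, the space $F$ is Fr\'echet (angelic); and more generally Rosenthal's $\ell^1$-dichotomy says that a uniformly bounded sequence $(g_k)$ of continuous functions on a Polish space either has a pointwise convergent subsequence (forcing every limit point to be Baire class $1$) or contains a subsequence equivalent to the $\ell^1$-basis, which produces an independent sequence whose pointwise closure is homeomorphic to $\beta\N$. Applied to the orbit $\{f_n\circ\gamma\}$, this yields, for each $n$, one of two alternatives: either $F_n\subset B_1(X)$ and is Fr\'echet, or $F_n$ contains a copy of $\beta\N$.

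Finally I would splice these coordinate dichotomies. If for every $n$ the first BFT alternative holds, then $\Phi(E)\subset \prod_n B_1(X)$ embeds into a countable product of Fr\'echet compacta, and using $\aleph_0$-stability of the Fr\'echet property plus separability of each $F_n$, conclude that $E$ itself is a separable Fr\'echet compact space, giving $|E|\le 2^{\aleph_0}$. If instead some $F_n$ contains a copy of $\beta\N$, then lifting via $\hat{f}_n$ (a continuous surjection from the compact $E$ onto $F_n$) together with projectivity of $\beta\N$ in the category of compact Hausdorff spaces produces a homeomorphic copy of $\beta\N$ in $E$; the cardinality $|E|=2^{2^{\aleph_0}}$ then follows from $|E|\le|X|^{|X|}=2^{2^{\aleph_0}}$ and $|\beta\N|=2^{2^{\aleph_0}}$.

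The main obstacle is the proper invocation of BFT/Rosenthal: one must certify that the coordinate alternatives combine into a global one. The delicate point is that \emph{failure} of Fr\'echetness at a single coordinate $F_n$ must be promoted to the existence of an independent sequence in $\Gamma$ (a sequence $(\gamma_k)$ whose action makes $\{f_n\circ\gamma_k\}$ equivalent to the $\ell^1$-basis) and then lifted to $E$. This lifting, and the simultaneous proof that Fr\'echetness in all coordinates descends to $E$ (here separability and angelicity of $C_p$-spaces over Polish domains are used), is where most of the real work lies; once established, the cardinality bookkeeping and the statement of the dichotomy follow immediately.
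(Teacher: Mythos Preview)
The paper does not actually prove this theorem: it is quoted in the introduction as a result of Glasner and Megrelishvili (references \cite{G}, \cite{GM}, building on K\"{o}hler \cite{Ko} and the classical Bourgain--Fremlin--Talagrand work \cite{BFT}), and is used throughout as background. So there is no in-paper proof to compare against; your sketch is in fact close to the argument given in those cited sources.

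Your outline is essentially correct, and the two nontrivial moves --- lifting a copy of $\beta\N$ from a coordinate $F_n$ back to $E$ via projectivity of $\beta\N$ among compact Hausdorff spaces, and handling separability by passing through a countable uniformly dense subset of the orbit in $C(X)$ --- are fine. One point, however, is phrased in a way that would not survive scrutiny. You write that in the ``good'' case you conclude Fr\'echetness of $E$ from ``$\aleph_0$-stability of the Fr\'echet property.'' Taken literally this is false: countable products of Fr\'echet compacta need not be Fr\'echet. What actually makes the splice work is that each $F_n$ is not merely Fr\'echet but a \emph{Rosenthal compactum}, i.e.\ a pointwise compact set of Baire class~1 functions on the Polish space $X$. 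The product $\prod_n F_n$ then embeds, via $(g_n)_n \mapsto \big((x,n)\mapsto g_n(x)\big)$, as a pointwise compact set of Baire class~1 functions on the Polish space $X\times\N$, and BFT applied once more to this single family gives Fr\'echetness (and angelicity) of the product, hence of its closed subspace $\Phi(E)\cong E$. If you rephrase that step in terms of Rosenthal compacta rather than abstract Fr\'echet compacta, the argument goes through and matches the published proofs.
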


A dynamical system is called {\em tame} if the first alternative occurs,
i.e. $E(X, \Gamma)$ is  Fr\'{e}chet.
A dynamical characterization of tame metrizable dynamical systems follows,
\cite{GM} and \cite{GMU}:

\begin{thm}
A compact metric dynamical system $(X, \Gamma)$ is tame if and only if
every element of $E(X, \Gamma)$ is a Baire class 1 function from $X$ to itself.
\end{thm}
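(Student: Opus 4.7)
The plan is to derive the equivalence directly from the classical Bourgain--Fremlin--Talagrand (BFT) theorem on pointwise-compact families of Baire class $1$ functions, using the density of $\Ga$ in $E(X,\Ga)$ and the fact that the topology on the enveloping semigroup is pointwise convergence.

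For the ($\Rightarrow$) direction, assume $E:=E(X,\Ga)$ is Fr\'echet. Given $p\in E$, the density of $\Ga$ in $E$ together with the Fr\'echet property produces a sequence $(\ga_n)\subset\Ga$ with $\ga_n\to p$ in $E$. Since the topology on $E\subseteq X^X$ is that of pointwise convergence, this says $\ga_n(x)\to p(x)$ for every $x\in X$. Each $\ga_n\colon X\to X$ is a homeomorphism, hence continuous, so $p$ is a pointwise limit of continuous maps and is therefore of Baire class $1$.

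For the ($\Leftarrow$) direction I would represent $E$ faithfully as a pointwise-compact family of Baire class $1$ real-valued functions on a Polish space, in order to invoke BFT. Fix a countable point-separating family $(f_k)\subset C(X)$, set $Y=X\times\N$ (a Polish space), and for each $p\in E$ define $\tilde p\colon Y\to\R$ by $\tilde p(x,k)=f_k(p(x))$. The restriction of $\tilde p$ to each clopen slice $X\times\{k\}$ is the composition of the continuous function $f_k$ with the Baire class $1$ map $p$, hence is Baire class $1$; so $\tilde p$ itself is Baire class $1$ on $Y$. The assignment $p\mapsto\tilde p$ is continuous (by the very definition of the product topology on $E\subseteq X^X$) and injective (because $(f_k)$ separates points of $X$), and therefore is a homeomorphism of the compact Hausdorff space $E$ onto its image $\widetilde E\subseteq\R^Y$.

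By the BFT theorem $\widetilde E$ is a Rosenthal compactum and hence Fr\'echet (indeed angelic); transporting back, $E$ is Fr\'echet, i.e.\ $(X,\Ga)$ is tame. The step I expect to require the most care is the verification that $\tilde p$ is genuinely of Baire class $1$ on the whole of $Y$, which reduces to the standard facts that composing a Baire class $1$ function with a continuous function preserves the class and that a function on a countable disjoint union of Polish spaces is Baire class $1$ iff each restriction is. An alternative route avoids the embedding entirely by invoking Theorem $0.1$: failure of the Fr\'echet property for $E$ would force a copy of $\beta\N$ inside $E$, which, transported to $\widetilde E$, would contradict the angelic property of any pointwise-compact family of Baire class $1$ functions, since $\beta\N$ is notoriously non-sequential.
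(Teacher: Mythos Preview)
The paper does not supply its own proof of this theorem; it is quoted in the introduction as a known characterization, with references to \cite{GM} and \cite{GMU}. So there is no in-paper argument to compare against.

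Your proof is correct and follows the standard route. The forward direction is immediate once one observes that Fr\'echet plus density of $\Ga$ yields a sequence of homeomorphisms converging pointwise to any given $p\in E$. For the converse, your embedding $p\mapsto\tilde p$ into $\R^{X\times\N}$ via a separating sequence $(f_k)\subset C(X)$ is exactly the device used to reduce to the real-valued BFT theorem; the verification that $\tilde p$ is Baire~1 on $Y=X\times\N$ is unproblematic since the slices $X\times\{k\}$ are clopen, so continuous approximants on each slice glue to continuous functions on $Y$. The map $p\mapsto\tilde p$ is a continuous injection of a compact Hausdorff space, hence a homeomorphism onto its image, and BFT then gives that $\widetilde E$ (and hence $E$) is Fr\'echet--Urysohn.

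One small remark: your ``alternative route'' via the dichotomy theorem (Theorem~0.1 in the paper) is not really independent, since that dichotomy is itself a consequence of the same BFT machinery; invoking it would make the argument circular in spirit if not in logic. The direct embedding argument you give first is the cleaner proof.
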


\br

For the definitions of HAE (hereditarily almost equicontinuous)
systems and other undefined notions which appear in the following theorem,
as well as for some further motivation and examples we refer the reader
to the papers \cite{GM}, \cite{G} and \cite{GMU}.

\begin{thm}[\cite{GMU}]
\label{th:main}
Let $X$ be a compact metric $G$-space.
The following conditions are equivalent:
\begin{enumerate}
\item
the dynamical system $(X, \Gamma)$ is hereditarily almost equicontinuous
(HAE);
\item
the dynamical system $(X, \Gamma)$ is
RN, that is,
it admits a proper representation on a Radon--Nikod\'ym
Banach space;
\item
the enveloping semigroup $E(X, \Gamma)$ is metrizable.
\end{enumerate}
\end{thm}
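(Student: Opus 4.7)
My approach is to establish the cyclic implications $(3) \Rightarrow (1) \Rightarrow (2) \Rightarrow (3)$, organized around fragmentability as the common thread linking metrizability of $E(X,\Gamma)$, almost equicontinuity, and the Radon--Nikod\'ym (equivalently, Asplund) property.

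For $(3) \Rightarrow (1)$ the first observation is that metrizability of the enveloping semigroup is inherited by subsystems: for any closed $\Gamma$-invariant $Y \subseteq X$, the restriction map $E(X,\Gamma) \to E(Y,\Gamma)$ is a continuous semigroup surjection, so $E(Y,\Gamma)$ is metrizable as well. It then suffices to show that metrizability of $E = E(X,\Gamma)$ implies that $(X,\Gamma)$ has an equicontinuity point. Fixing a compatible metric on $E$, the orbit maps $p \mapsto px$ are continuous on $E$ for each $x$, and a standard Baire-category argument shows that the evaluation $E \times X \to X$ is jointly continuous at a dense $G_\delta$ of $E \times X$, supplying the required equicontinuity points. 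Applied to each subsystem this yields HAE.

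The real work is $(1) \Rightarrow (2)$, which I expect to be the main obstacle. Here one must convert the combinatorial-topological HAE condition into a concrete representation of $(X,\Gamma)$ on an Asplund Banach space. The strategy is to produce a $\Gamma$-invariant, point-separating family $\Fcal \subseteq C(X)$ whose $\Gamma$-orbits are norm-fragmented by the pointwise topology; HAE is exactly the geometric input guaranteeing that enough such functions exist. Taking the closed linear span of a countable such $\Fcal$, closed under the isometric $\Gamma$-action, gives a separable Banach space $V \subseteq C(X)$, and the fragmentability of $\Gamma$-orbits translates, via the Namioka--Phelps characterization of Asplund spaces in terms of fragmentability of weak$^*$-compact subsets of the dual, into the statement that $V$ is Asplund. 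The canonical evaluation map $X \hookrightarrow V^*$ into the weak$^*$ ball is then the sought-after proper RN representation. The technical heart is checking that the hereditary nature of almost equicontinuity delivers fragmentability uniformly across orbits, not merely on a single generating piece.

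Finally $(2) \Rightarrow (3)$ is the shortest step: if $X$ embeds $\Gamma$-equivariantly into the weak$^*$-compact unit ball $B \subset V^*$ of an Asplund space $V$, one may replace $V$ by the closed linear span of the coefficient functionals of a countable dense subset of $X$, which is still Asplund and now separable, so its dual is norm-separable and $B$ becomes weak$^*$-metrizable. The enveloping semigroup $E(X,\Gamma)$ sits inside the weak$^*$ closure of the $\Gamma$-action restricted to $B$, and therefore inherits a compatible metric as a weak$^*$-compact subset.
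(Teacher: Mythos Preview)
This theorem is not proved in the present paper at all: it is quoted from \cite{GMU} as background in the introduction, with no argument supplied here. So there is no ``paper's own proof'' to compare your proposal against. Your task description asked for a comparison, but in this case the honest answer is that the paper simply cites the result.

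That said, your outline follows the same architecture as the original proof in \cite{GMU} (and the companion paper \cite{GM}): the bridge between all three conditions is indeed fragmentability, with Asplund (equivalently RN) representability as the functional-analytic incarnation. Your $(1)\Rightarrow(2)$ sketch is on the right track---HAE is equivalent, for compact metric systems, to hereditary non-sensitivity, and the latter is what yields an Asplund-generating family of functions via fragmented orbit-maps.

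One genuine soft spot is your $(2)\Rightarrow(3)$. You write that ``the enveloping semigroup $E(X,\Gamma)$ sits inside the weak$^*$ closure of the $\Gamma$-action restricted to $B$,'' but elements of $E(X,\Gamma)$ are a priori arbitrary (typically discontinuous) self-maps of $X$, not restrictions of adjoint operators, so it is not immediate that $E(X,\Gamma)$ embeds in anything weak$^*$-metrizable. The actual argument in \cite{GMU} passes through an \emph{operator} enveloping semigroup $\mathcal{E}\subset L(V)$ (closure of $\Gamma$ in the weak operator topology), shows that $\mathcal{E}$ is metrizable when $V$ is separable Asplund (using norm-separability of $V^*$), and then exhibits a continuous surjection $\mathcal{E}\to E(X,\Gamma)$. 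Your sketch elides this intermediate object; as written, the step has a gap.
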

It thus follows that every metric HAE system is tame.

\br

I would like to thank the two anonymous referees for their corrections and helpful suggestions.

\section{A brief survey of abstract topological dynamics}\label{sur-sec}
This section is a brief review of the structure theory of
minimal dynamical systems. We will emphasize some
aspects which will be relevant in the present work.
For full details the reader is referred to
the books \cite{E2}, \cite{Gl1}, \cite{Au} and \cite{Vr}
and the review articles \cite{V} and \cite{G-lu}.

If $X$ is a topological space and $A \subset X$ then we use 
$\ol{A} = \cls A$, $A^\circ$, $\partial A$ and $A^c = X \setminus A$ for the
closure, interior, boundary and complementation operations
on $A$, respectively.

A {\em topological dynamical system} or briefly a system
is a pair $(X,\Ga)$, where
$X$ is a compact Hausdorff space and $\Ga$ an abstract infinite group
which acts on $X$ as a group of homeomorphisms.
When there is no room for confusion we write $X$ for
the system $(X,\Ga)$.
A {\em sub-system} of \XGa is a closed invariant subset
$Y\subset X$ with the restricted action.
For a point $x\in X$, we let $\OGa (x)=\{\ga x:\ga \in \Ga\}$,
and $\OCGa (x)={\cls} \{\ga x:\ga \in \Ga\}$. These subsets of $X$
are called the {\em orbit} and
{\em orbit closure} of $x$ respectively.
We say that $(X,\Ga)$ is {\em point transitive} if
there exists a point $x\in X$ with a dense orbit.
In that case $x$ is called a {\em transitive point}.
If every point is transitive we say that $(X,\Ga)$ is a
{\em minimal system}. We say that $x\in X$ is an {\em
almost periodic} or a {\em minimal} point if $\OCGa(x)$
is a minimal system.

The dynamical system \XGa is {\em topologically transitive}
if for any two nonempty open subsets $U$ and $V$ of $X$
there exists some $\ga \in \Ga$ with $\ga U\cap V\ne\emptyset$.
Clearly a point transitive system is topologically transitive
and when $X$ is metrizable the converse holds as well:
in a metrizable topologically transitive system the
set of transitive points is a dense $G_\delta$ subset
of $X$.

The system \XGa is {\em weakly mixing} if the product
system $(X\times X, \Ga)$ (where $\ga(x,x')=(\ga x,\ga x'),
\ x,x'\in X,\ \ga \in \Ga$) is topologically transitive.

If $(Y,\Ga)$ is another system then a continuous onto
map $\pi:X\to Y$ satisfying $\ga\circ \pi=\pi\circ \ga$
for every $\ga \in \Ga$ is called a {\em homomorphism}
of dynamical systems. In this case we say that $(Y,\Ga)$
is a {\em factor} of $(X,\Ga)$ and also that
$(X,\Ga)$ is an {\em extension} of $(Y,\Ga)$.
With the system \XGa we associate the induced action
(the {\em hyper system} associated with \XGa)
on the compact space $2^X$ of closed subsets of $X$
equipped with the Vietoris topology.
A subsystem  $Y$ of $(2^X,\Ga)$ is a {\em
quasifactor} of \XGa if $\bigcup \{A: A\in Y\}=X$.

The system \XGa can always be considered as a quasifactor
of \XGa by identifying $x$ with $\{x\}$.
Recall that if \XtoYGa is a homomorphism then in general
$\pi^{-1}:Y\to 2^X$ is an upper-semi-continuous map
and that
$\pi:X\to Y$ is open iff $\pi^{-1}:Y\to 2^X$ is continuous.

\br

We assume for simplicity that our acting group $\Ga$
is a discrete group. $\beta \Ga$ will denote the
Stone-\v {C}ech compactification of $\Ga$.
The universal properties of $\beta \Ga$ make it
\begin{itemize}
\item
a compact semigroup with right continuous
multiplication (for a fixed $p\in \beta \Ga$
the map $q\mapsto qp,\ q\in \beta \Ga$ is continuous),
and left continuous multiplication by
elements of $\Ga$, considered as elements of $\beta \Ga$
(for a fixed $\ga\in \Ga$
the map $q\mapsto \ga q,\ q\in \beta \Ga$ is continuous).
\item
a dynamical system $(\beta \Ga, \Ga)$ under
left multiplication by elements of $\Ga$.
\end{itemize}

The system $(\beta \Ga, \Ga)$ is the universal point
transitive $\Ga$-system; i.e. for every point
transitive system \XGa and a point $x\in X$ with
dense orbit, there exists a homomorphism of
systems $(\beta \Ga, \Ga)\to (X,\Ga)$ which sends $e$,
the identity element of $\Ga$, onto $x$. For $p\in \beta \Ga$
we let $px$ denote the image of $p$ under this homomorphism.
This defines an ``action" of the semigroup $\beta \Ga$ on
every dynamical system. In fact, by universality there exists
a unique homomorphism $(\beta \Ga, \Ga)\to (E(X,\Ga),\Ga)$
onto the enveloping semigroup $E(X,\Ga)$ which is also a semigroup
homomorphism and we can interpret, and often do, the $\beta \Ga$
action on $X$ via this homomorphism.

When dealing with the hyper system $(2^X,\Ga)$ we write
$p\circ A$ for the image of the closed subset $A\subset X$
under $p\in \beta \Ga$ to distinguish it
from the (usually non-closed) subset $pA=\{px:x\in A\}$.
If $p$ is the limit of a net $\ga_i$ in
$\Ga$ then
$$
p \circ A =\{x\in X:
\text{there are a subnet $\ga_{i_j}$ and a net $x_j \in A$
with $x=\lim_j \ga_{i_j} x_j$}\}.
$$
We always have $pA\subset p\circ A$.

\br

The compact semigroup $\beta \Ga$ has a
rich algebraic structure.
For instance for countable $\Ga$
there are $2^c$ minimal
left (necessarily closed) ideals in
$\beta \Ga$ all isomorphic as systems
and each serving as a universal minimal system.
Each such minimal ideal, say $M$, has a subset $J$
of $2^c$ idempotents such that $\{vM:v\in J\}$ is a
partition of $M$ into disjoint isomorphic (non-closed)
subgroups. An idempotent in $\beta\Ga$ is called {\em minimal} if
it belongs to some minimal ideal.
A point $x$ in a dynamical system \XGa is
a minimal point iff there is some minimal idempotent $v$ in $\beta\Ga$
with $vx=x$, iff there exists some $v \in J$ with $vx=x$.

The group of dynamical system automorphisms of $(M,\Ga)$,\
$G={\Aut}(M,\Ga)$ can be identified with any one of the
groups $vM$ as follows:
with $\al\in vM$ we associate
the automorphism $\hat\al:(M,\Ga)\to (M,\Ga)$ given by
right multiplication $\hat\al(p)=p\al,\ p\in M$.
The group $G$ plays a central role in the algebraic theory.
It carries a natural $T_1$ compact topology, called by
Ellis the $\tau$-{\em topology}, which is weaker than the
relative topology induced on $G=uM$ as a subset of $M$.
The $\tau$-closure of a subset $A$ of $G$
consists of those $\beta\in G$ for which the set
${\gr}(\beta)=\{(p,p\beta):p\in M\}$
is a subset of the closure in $M\times M$
of the set $\bigcup\{{\gr}(\alpha):\alpha\in A\}$.
Both right and left multiplication on $G$ are $\tau$ continuous
and so is inversion.

It is convenient to fix a minimal left ideal $M$
in $\beta \Ga$ and an idempotent $u\in M$. As explained above we
identify $G$ with $uM$ and it follows that
for any subset $A\subset G$,
$$
{\cls}_\tau A = u(u\circ A) = G \cap (u\circ A).
$$
Also in this way we can consider
the ``action" of $G$ on every system \XGa via the
action of $\beta \Ga$ on $X$.
With every minimal
system \XGa and a point $x_0\in uX=\{x\in X: ux=x\}$ we
associate a $\tau$-closed subgroup
$$
\mathfrak{G}(X,x_0)=\{\alpha\in G:\alpha x_0=x_0\},
$$
the {\em Ellis group} of the pointed
system $(X,x_0)$. The quotient space $G/\mathfrak{G}(X,x_0)$
can be identified with the subset $uX \subset X$ via the map
$\al \mapsto \al x_0$
and the induced quotient $\tau$-topology is called the {\em
$\tau$-topology} on $uX$. Again the $\tau$-topology is weaker than
the relative topology induced on $uX$ as a subset of $X$, it is
$T_1$ and compact, and the closure operation is given by
$$
{\cls}_\tau A = u(u\circ A) = uX \cap (u\circ A),
\qquad A \subset uX.
$$

For a homomorphism $\pi: X \to Y$
with $\pi(x_0)=y_0$ we have
$$
\mathfrak{G}(X,x_0)\subset \mathfrak{G}(Y,y_0).
$$

For a $\tau$-closed subgroup $F$ of $G$ the {\em derived group}
$F'$ is given by:
\begin{equation}\label{prime}
F':=\bigcap\{{\cls}_\tau O : O\
\text{a $\tau$-open neighborhood of\ } u\  \text{in} \ F\}.
\end{equation}
$F'$ is a $\tau$-closed normal
(in fact characteristic) subgroup of $F$ and it is
characterized as the smallest $\tau$-closed subgroup
$H$ of $F$ such that $F/H$ is a compact
Hausdorff topological group.
In particular, for an abelian $\Ga$, the topological group
$G/G'$ is the Bohr compactification of $\Ga$.

\br

A pair of points $(x,x')\in X\times X$ for a system \XGa
is called {\em proximal} if there exists a net
$\ga_i\in \Ga$ and a point $z\in X$ such that
$\lim \gamma_ix=\lim \gamma_ix'=z$
(iff there exists $p\in\beta \Ga$ with $px=px'$).
We denote by $P$ the set of proximal pairs in
$X\times X$.
We have
$$
P=\bigcap\ \{\Ga V:V\
{\text{\rm a neighborhood of the diagonal in}}\ X\times X\}.
$$
We write $P[x] := \{x' \in X : (x,x') \in P\}$.
A system \XGa is called {\em proximal} when $P=X\times X$
and {\em distal} when $P=\Delta$, the diagonal in $X\times X$.
It is called {\em strongly proximal} when the following
much stronger condition holds: the dynamical system
$(\Mf(X),\Ga)$, induced on the compact space $\Mf(X)$ of Borel probability
measures on $X$, is proximal.
A minimal system \XGa is called {\em point distal}
if there exists a point $x\in X$ such that if
$x,x'$ is a proximal pair then $x=x'$.

The {\em regionally proximal relation\/} on
$X$ is defined by
$$
Q=\bigcap\ \{\ol {\Ga V}:V\
{\text{\rm a neighborhood of $\Delta$ in}}\ X\times X\}.
$$
It is easy to verify that $Q$ is trivial --- i.e. equals
$\Delta$ --- iff the system is equicontinuous.

\br

An extension \XtoYGa of minimal systems
is called a {\em proximal extension}
if the relation $R_\pi=\{(x,x'):\pi(x)=\pi(x')\}$
satisfies $R_\pi\subset P$, and
a {\em distal extension} when $R_\pi\cap P=\Del$.
One can show that every distal extension is open.
$\pi$ is a {\em highly proximal (HP) extension} if for every closed
subset $A$ of $X$ with $\pi(A)=Y$, necessarily $A=X$.
It is easy to see that a HP extension is proximal.
In the metric case an extension \XtoYGa of minimal systems is
HP iff it is an {\em almost  1-1 extension}, that is the set
$\{x\in X:   \pi^{-1}(\pi(x)) =\{x\}\}$
is a dense $G_\del$ subset of $X$.

The map $\pi$ is {\em strongly proximal} if for every $y\in Y$
and every probability measure $\nu$ with ${\supp}\nu \subset \pi^{-1}(y)$,
there exists a net $\ga_i\in \Ga$ and a point $x\in X$ such that
$\lim_i \ga_i \nu =\delta_x$ in the weak$^*$ topology on the space
$\Mf(X)$ of probability measures on $X$.

The extension $\pi$ is called an
{\em equicontinuous extension} if for every $\ep$,
a neighborhood of the diagonal
$\Delta = \{(x,x): x\in X\} \subset X\times X$,
there exists a neighborhood of the diagonal $\del$ such that
$\ga (\del\cap R_\pi) \subset \ep$ for every $\ga \in \Ga$.
In the metric case an equicontinuous extension is also called
an {\em isometric extension}.

A minimal dynamical system $(X, \Ga)$ is called 
{\em almost automorphic} if
it has the form $X \overset{\tet}{\to} Z$, where $Z$ is equicontinuous and $\tet$
is almost one-to-one. More generally, an extension of minimal systems
$\pi : X \to Y$ is called an {\em almost automorphic extension} if it has the form
$X \overset{\tet}{\to} Z \overset{\sig}{\to} Y$, where $\tet$ is an almost one-to-one
extension, $\sig$ is an isometric extension and $\pi = \sig \circ \tet$.

The extension $\pi$ is a {\em weakly mixing extension}
when $R_\pi$ as a subsystem of the product system
$(X\times X,\Ga)$ is topologically transitive.

\br

The algebraic language is particularly suitable for dealing
with such notions. For example an extension
\XtoYGa of minimal systems is a proximal extension iff
the Ellis groups
$\mathfrak{G} (X,x_0)=A$ and $\mathfrak{G} (Y,y_0)=F$ coincide.
It is distal iff for every $y\in Y$,
and $x\in \pi^{-1}(y),\
\pi^{-1}(y)=\mathfrak{G}(Y,y)x$;\
iff:
\begin{quote}
for every $y=py_0\in Y$, $p$ an element of $M$,
$\pi^{-1}(y)=p\pi^{-1}(y_0)=
pFx_0$, where $F=\mathfrak{G}(Y,y_0)$.
\end{quote}
In particular $(X,\Ga)$ is distal iff
$Gx=X$ for some (hence every) $x\in X$.
The extension $\pi$ is an equicontinuous extension
iff it is a distal extension and, denoting
$\mathfrak{G} (X,x_0)=A$ and $\mathfrak{G} (Y,y_0)=F$,
$$
F'\subset A.
$$
In this case, setting $A_0=\bigcap_{g\in F} gAg^{-1}$,
the group $F/A_0$ is the
group of the {\em group extension\/}\ $\tilde \pi$ associated with
the equicontinuous extension $\pi$.
More precisely, there exists a minimal
dynamical system $(\tilde X,\Ga)$, with
${\mathfrak{G}}(\tilde{X},\tilde{x}_0)=A_0$,
on which the compact Hausdorff topological
group $K=F/A_0$ acts as a group of automorphisms and
we have the following commutative diagram
\begin{equation}\label{g-ext}
\xymatrix
{
\tilde{X} \ar[dd]_{\tilde{\pi}}\ar[dr]^{\phi}  & \\
 & X\ar[dl]^{\pi}\\
Y &
}
\end{equation}
where $\tilde\pi: \tilde X \to Y\cong \tilde X/ K$ is
a group extension and so is the extension
$\phi: \tilde X \to X\cong \tilde X/L$
with $L=A/A_0 \subset F/A_0 = K$. ($\tilde X =X$ iff
$A$ is a normal subgroup of $F$.)

\br

A minimal system \XGa is called {\em incontractible}
if the union of minimal subsets is dense in every
product system $(X^n, \Ga)$. This is the case iff
$p\circ Gx=X$ for some (hence every) $x\in X$
and $p\in M$. When $\Ga$ is abelian $Gx$ is always dense
in $X$ so that every minimal system is incontractible.
However the following relative notion is an important tool
even when $\Ga$ is abelian.

We say that \XtoYGa is a RIC ({\em relatively incontractible})
{\em extension} if:
\begin{quote}
for every $y=py_0\in Y$, $p$ an element of $M$,
$\pi^{-1}(y)=p\circ u\pi^{-1}(y_0)=
p\circ Fx_0$, where $F=\mathfrak{G}(Y,y_0)$.
\end{quote}
One can show that every RIC extension is open and that
every distal extension is RIC. It then
follows that every distal extension is open.

We have the following theorem from \cite{EGS}
about the interpolation of equicontinuous extensions.
For a proof see \cite{Gl1}, Theorem X.2.1.

\begin{thm}\label{eq-ext}
Let $\pi: X \to Y$ be a RIC extension of minimal systems.
Fix a point $x_0\in X$ with $ux_0=x_0$ and let $y_0=\pi(x_0)$.
Let $A=\mathfrak{G}(X,x_0)$ and $F=\mathfrak{G}(Y, y_0)$.
Then there exists a commutative diagram of pointed systems
\begin{equation}\label{eq-ext-d}
\xymatrix
{
(X,x_0) \ar[dd]_{\pi}\ar[dr]^{\sig}  & \\
 & (Z,z_0)\ar[dl]^{\rho}\\
(Y,y_0) &
}
\end{equation}
such that  $\rho$ is the largest intermediate 
equicontinuous extension. Its
Ellis group satisfies $\mathfrak{G}(Z,z_0)=AF'$. 
The extension $\rho$ is an isomorphism iff $AF'=F$.
\end{thm}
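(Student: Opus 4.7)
The plan is to construct $Z$ via the Galois correspondence between $\tau$-closed subgroups lying between $A$ and $F$ and pointed minimal factors of $(X,x_0)$ factoring onto $(Y,y_0)$, then to identify the subgroup $AF'$ as the one producing the largest equicontinuous intermediate extension. First I would check that $AF'$ really is a $\tau$-closed subgroup of $F$ containing $A$. Since $F'$ is a $\tau$-closed normal subgroup of $F$ and, by the characterization of $F'$ in \eqref{prime}, the quotient $F/F'$ is a compact Hausdorff topological group, the image of the $\tau$-closed set $A \cap F$ in $F/F'$ is $\tau$-compact and hence closed there; pulling back yields that $AF'$ is $\tau$-closed in $F$, and evidently $A \subseteq AF' \subseteq F$.

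With $H = AF'$ in hand, the standard Galois correspondence for minimal systems supplies a pointed minimal factor $(Z,z_0)$ of $(X,x_0)$ with $\mathfrak{G}(Z,z_0) = AF'$, together with factor maps $\sigma : (X,x_0) \to (Z,z_0)$ and $\rho : (Z,z_0) \to (Y,y_0)$ satisfying $\rho \circ \sigma = \pi$. To see that $\rho$ is equicontinuous I would apply the Ellis-group criterion recalled above: it suffices to verify that $\rho$ is distal and that $F' \subseteq \mathfrak{G}(Z,z_0)$, and the second inclusion is trivial since $F' \subseteq AF'$. For distality, the RIC hypothesis on $\pi$ gives $\pi^{-1}(y) = p \circ Fx_0$ for any $y = py_0 \in Y$, $p \in M$. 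Pushing this identity down by $\sigma$ and using that $F/AF'$, as a quotient of the compact Hausdorff topological group $F/F'$, acts continuously on its orbit $Fz_0$ so that $pFz_0$ is already closed, I would obtain $\rho^{-1}(y) = \sigma(p \circ Fx_0) = pFz_0$, which is the defining criterion for $\rho$ to be distal.

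For the maximality, suppose $X \overset{\sigma'}{\to} W \overset{\rho'}{\to} Y$ is any intermediate pointed equicontinuous extension, and set $B = \mathfrak{G}(W,w_0)$. Then $A \subseteq B \subseteq F$, and the Ellis-group criterion for the equicontinuous $\rho'$ yields $F' \subseteq B$, so $AF' \subseteq B$. The Galois correspondence then provides a factor map $(Z,z_0) \to (W,w_0)$ over $(Y,y_0)$, exhibiting $\rho$ as dominating every intermediate equicontinuous extension. Finally, $\rho$ is an isomorphism exactly when $\mathfrak{G}(Z,z_0) = \mathfrak{G}(Y,y_0)$, i.e. when $AF' = F$. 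The main technical obstacle is the distality verification in the second paragraph: it requires a careful matching of the RIC fiber description $\pi^{-1}(y) = p \circ Fx_0$ with the behavior of $\sigma$ on closed $\beta\Gamma$-orbits, via the topological-group structure of $F/AF'$; once this is in place, the other steps are essentially formal consequences of the Galois correspondence and the Ellis-group characterizations of distal and equicontinuous extensions.
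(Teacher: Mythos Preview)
The paper does not prove this theorem; it is quoted from \cite{EGS}, with the sentence ``For a proof see \cite{Gl1}, Theorem X.2.1'' standing in lieu of an argument. So there is no in-paper proof to compare against.

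Your outline is the standard one found in the cited reference: identify the $\tau$-closed subgroup $AF'$, build $Z$ from it, verify equicontinuity of $\rho$ via the algebraic criterion (distal plus $F'\subseteq\mathfrak{G}(Z,z_0)$), and deduce maximality from $AF'\subseteq B$ for any competing intermediate group $B$. Two places deserve a bit more care than you give them. First, the ``Galois correspondence'' you invoke is not a bijection in general between $\tau$-closed subgroups containing $A$ and pointed factors of $(X,x_0)$; it is precisely the RIC hypothesis on $\pi$, together with the fact that $F/AF'$ is a Hausdorff quotient of the compact Hausdorff group $F/F'$, that lets one realize $Z$ (e.g.\ as the quasifactor $\{p\circ AF'x_0:p\in M\}$) with Ellis group \emph{exactly} $AF'$ rather than something larger. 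Second, in your distality step the equality $\sigma(p\circ Fx_0)=pFz_0$ is the crux: the inclusion $\sigma(p\circ Fx_0)\subseteq p\circ Fz_0$ is automatic from continuity of $\sigma$, and the reduction $p\circ Fz_0=pFz_0$ is where the compact Hausdorff group structure of $F/F'$ (and hence of $F/AF'$) does real work, since it makes $Fz_0$ $\tau$-closed and the left $M$-action well-behaved on it. You flag this correctly as the main obstacle; once it is written out, the rest is indeed formal.
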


\br

Given a homomorphism $\pi:(X,\Ga) \to (Y,\Ga)$ of minimal metric systems,
there are several standard constructions of associated ``shadow diagrams".
In the {\em O-shadow diagram}
\begin{equation}\label{O}
\xymatrix
{
X \ar[d]_{\pi}  & X^* =X \vee Y^*\ar[l]_-{\tet^*} \ar[d]^{\pi^*} \\
Y  & Y^*\ar[l]^{\tet}
}
\end{equation}
the map $\pi^*$ is open
and the maps $\theta$ and $\theta^*$ are almost 1-1.
The explicit constructions is as follows.
The set valued map $\pi^{-1}: Y \to 2^X$ (where the latter is
the compact space of closed subsets of $X$, equipped with the
Hausdorff, or Vietoris, topology) is upper-semicontinuous
and we let $Y_0\subset Y$ be the set of continuity points
of this map. Set
$Y^*={\cls}\{\pi^{-1}(y): y\in Y_0\}\subset 2^X$,
and $X^*=X\vee Y^* = {\cls}\{(x,\pi^{-1}(y)): y\in Y_0,\ \pi(x)=y\}
\subset X\times Y^*$.
By the upper-semicontinuity of $\pi^{-1}$ every $y^*\in Y^*$
is contained in a fiber $\pi^{-1}(y)$ for some $y\in Y$
and we let $\tet(y^*)=y$.
The maps $\pi^*$ and $\tet^*$ are the restriction to $X^*$
of the coordinate projections on $X$ and $Y^*$ respectively.
One then shows that
$X^*=\{(x,y^*): x \in y^*\in Y^*\}$
and that indeed, $\pi^*$ is open
and the maps $\theta$ and $\theta^*$ are highly proximal.
The O-shadow diagram collapses, i.e. $Y=Y^*$, $X=X^*$
and $\pi=\pi^*$ iff $\pi: X\to Y$ is an open map; iff
the map $\pi^{-1}: Y \to 2^X$ is continuous.

In the {\em RIC-shadow diagram}
\begin{equation}
\xymatrix
{
X \ar[d]_{\pi}  & X^*=X\vee Y^* \ar[l]_-{\tet^*} \ar[d]^{\pi^*} \\
Y  & Y^*\ar[l]^{\tet}
}
\end{equation}
$ \pi^*$ is RIC and $\theta, \theta^*$
are proximal (thus we still have
$A={\mathfrak{G}}(X,x_0)={\mathfrak{G}}( X^*, x^*_0)$ and
$F={\mathfrak{G}}(Y,y_0)= {\mathfrak{G}}(Y^*, y^*_0)$).
The concrete description of these objects uses
quasifactors and the circle operation:
$$
Y^*= \{p\circ Fx_0:p\in M\}\subset  2^X,\qquad
X^*= \{(x, y^*):x\in  y^*\in Y^*\}
\subset X\times  Y^*
$$
and
$$
\theta(p\circ Fx_0)=py_0, \quad
\theta^*(x, y^*)=x,\quad
\pi^*(x, y^*)= y^*,\quad (p\in M),
$$
where $F={\mathfrak{G}}(Y,y_0)$.
The map $\theta$ is an isomorphisms (hence $\pi=\pi^*$)
when and only when $\pi$ is already RIC.

\br

We say that $\pi:(X,\Ga) \to (Y,\Ga)$ has a
{\em relatively invariant measure} (or that is a RIM extension),
if there exists a projection $P:C(X) \to C(Y)$ such that
\begin{enumerate}
\item
$P(f) \ge 0$ for $f\ge 0$ in $C(X)$.
\item
$P(\ch)=1.$
\item
$P(h \circ \pi)= h$ for every $h \in C(Y)$.
\item
$P(f\circ \ga) = P(f) \circ \ga$ for every $f\in C(X)$ and $\ga\in \Ga$.
\end{enumerate}
This property is equivalent to the existence of a continuous {\em section},
i.e. a continuous $\Ga$ equivariant map $y \mapsto \la_y$
from $Y$ into $\Mf(X)$ such that $\pi(\la_y)=\del_y$ for every $y\in Y$.
Here and in the sequel we use the same letter $\pi$ to denote the induced
map $\pi: \Mf(X) \to \Mf(Y)$ on the spaces of probability measures.
Sometimes though we will write $\pi_*$ for the induced map.

In the sequel we will need the following lemmas.

\begin{lem}\label{RIM-fs}
Let $\pi:(X,\Ga) \to (Y,\Ga)$ be a RIM homomorphism of minimal metric systems
with section $y \mapsto \la_y$. Then there exists a dense $G_\del$ subset
$Y_{fs} \subset Y$ with the property that ${\supp}(\la_y) = \pi^{-1}(y)$
for every $y \in Y_{fs}$.
\end{lem}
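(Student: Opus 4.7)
The plan is to show that $Y \setminus Y_{fs}$ is meager and $F_\sig$, so that $Y_{fs}$ is $G_\del$ and, by Baire on the compact metric space $Y$, dense. Fix a countable basis $\{V_n\}$ for the topology of $X$. The inclusion $\supp(\la_y) \subset \pi^{-1}(y)$ is automatic from $\pi_*\la_y = \del_y$, so $y \notin Y_{fs}$ iff some $x \in \pi^{-1}(y)$ lies outside $\supp(\la_y)$, iff for some $n$ we have $V_n \cap \pi^{-1}(y) \ne \emptyset$ and $\la_y(V_n) = 0$. Hence $Y \setminus Y_{fs} = \bigcup_n E_n$, with $E_n = \pi(V_n) \cap \{y : \la_y(V_n) = 0\}$. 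The second factor is closed by lower semicontinuity of $y \mapsto \la_y(V_n)$, and $\pi(V_n)$ is $F_\sig$: writing $V_n = \bigcup_m L_{n,m}$ as a countable union of compact sets (every open set in a compact metric space is $\sig$-compact) gives $\pi(V_n) = \bigcup_m K_{n,m}$ with $K_{n,m} = \pi(L_{n,m})$ compact. So each $E_n$ is a countable union $\bigcup_m F_{n,m}$ of closed pieces $F_{n,m} = K_{n,m} \cap \{y : \la_y(V_n) = 0\}$.

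The crux, and the step I expect to be the obstacle, is to show each $F_{n,m}$ has empty interior; the naive attempt of trying the whole closed set $\{y : \la_y(V_n) = 0\}$ fails because that set can indeed be large (in the trivial case $\pi=\id$ it is simply $V_n^c$). Suppose for contradiction that $W \subset F_{n,m}$ is a nonempty open subset of $Y$. Since $W \subset K_{n,m} \subset \pi(V_n)$, the set $U := V_n \cap \pi^{-1}(W)$ is a nonempty open subset of $X$, and for every $x \in U$ we have $\la_{\pi(x)}(V_n) = 0$ while $V_n$ is an open neighborhood of $x$, which forces $x \notin \supp(\la_{\pi(x)})$.

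The contradiction is then produced by using minimality of $(X,\Ga)$ to spread this ``self-avoiding'' behavior to all of $X$. Define $N := \{x \in X : x \notin \supp(\la_{\pi(x)})\}$. Equivariance of the section, $\la_{\ga y} = \ga_*\la_y$ (a direct consequence of property (4) of the RIM projection $P$), yields $\supp(\la_{\ga y}) = \ga\supp(\la_y)$, whence $N$ is $\Ga$-invariant. Since $U \subset N$, invariance together with minimality gives $X = \Ga U \subset N$, so $N = X$. But this contradicts the elementary fact that for any single $y$, $\la_y$ is a probability measure, so $\supp(\la_y)$ is nonempty, and any $x \in \supp(\la_y) \subset \pi^{-1}(y)$ satisfies $\pi(x) = y$ and $x \in \supp(\la_{\pi(x)})$, so $x \notin N$. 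Hence each $F_{n,m}$ is nowhere dense, $Y \setminus Y_{fs}$ is meager, and $Y_{fs}$ is the required dense $G_\del$ subset of $Y$.
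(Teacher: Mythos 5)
Your proof is correct, and it takes a genuinely different route from the paper's. The paper observes that $y \mapsto \supp(\la_y)$ is a lower semicontinuous map from $Y$ into $2^X$, takes $Y_{fs}$ to be its dense $G_\del$ set of Vietoris-continuity points, and then uses minimality plus equivariance of the section to see that $\bigcup_y \supp(\la_y)$ is dense in $X$; approximating a given $x \in \pi^{-1}(y)$ by points $x_i \in \supp(\la_{y_i})$ and invoking continuity at $y$ then forces $x \in \supp(\la_y)$. You instead work with the set $\{y : \supp(\la_y) = \pi^{-1}(y)\}$ itself, exhibit its complement as a countable union of explicitly closed pieces $F_{n,m}$, and kill the interior of each piece by transporting the open set $U = V_n \cap \pi^{-1}(W)$ of ``self-avoiding'' points around $X$ via minimality and the equivariance $\la_{\ga y} = \ga_*\la_y$, contradicting $\supp(\la_y) \ne \emptyset$. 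All the steps check out: the lower semicontinuity of $y \mapsto \la_y(V_n)$ for open $V_n$ makes $\{y : \la_y(V_n)=0\}$ closed, the $\sig$-compactness of $V_n$ handles the fact that $\pi(V_n)$ need only be $F_\sig$, and $\Ga U = X$ for nonempty open $U$ in a minimal system. The two arguments use the same two ingredients (Baire category, and minimality combined with equivariance of the section), but your version is more elementary in that it avoids the hyperspace $2^X$ and the generic-continuity theorem for semicontinuous set-valued maps; it also yields the slightly stronger conclusion that the \emph{entire} set of full-support fibers is a dense $G_\del$, whereas the paper only certifies the subset of continuity points.
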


\begin{proof}
It is easy to see that, in general, the map $y \mapsto {\supp}(\la_y)$ from
$Y$ into $2^X$ is lower-semicontinuous; i.e. whenever $y_i \to y$ in $Y$
then $\liminf {\supp}(\la_{y_i}) \supset {\supp}(\la_y)$. Such a map admits
a dense $G_\del$ set of continuity points which we denote by $Y_{fs}$.
We claim that for each $y \in Y_{fs}$ the measure $\la_y$ has {\em full support}, i.e.  
${\supp}(\la_y) = \pi^{-1}(y)$.
To see this observe first that, by minimality, ${\cls} (\cup\{{\supp}(\la_y) : y \in Y\})=X$.
Now if $y \in Y_{fs}$ and $x \in  \pi^{-1}(y)$ then there is a sequence $x_i \to x$
with $x_i \in {\supp}(\la_{y_i})$, for some sequence $y_i \in Y$. Necessarily
$\pi(x_i) = y_i \to y$ and, as $y \in Y_{fs}$, we have $\lim {\supp}(\la_{y_i}) = {\supp}(\la_y)$.
Now
$$
x \in \liminf {\supp}(\la_{y_i}) = \lim {\supp}(\la_{y_i}) = {\supp}(\la_y).
$$
\end{proof}

\begin{lem}\label{ORIM}
Let $\pi:(X,\Ga) \to (Y,\Ga)$ be a RIM homomorphism of minimal metric systems
with section $y \mapsto \la_y$. Let (\ref{O}) be the associated O-shadow diagram.
Then the open homomorphism $\pi^*$ is RIM as well. 
\end{lem}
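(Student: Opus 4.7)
The plan is to construct the RIM projection $P^*\colon C(X^*) \to C(Y^*)$ directly from the given section $y \mapsto \la_y$. Recall from the O-shadow diagram that $X^*$ sits inside $X \times Y^*$ as the closed set $\{(x, y^*) : x \in y^*\}$, and that the subset $Y^*_0 := \{\pi^{-1}(y) : y \in Y_0\} \subset Y^*$ (where $Y_0$ is the set of continuity points of $\pi^{-1}\colon Y \to 2^X$) is dense in $Y^*$ by the very definition of $Y^*$, with the characteristic property that $y^* = \pi^{-1}(\tet(y^*))$ for every $y^* \in Y^*_0$.

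Given $F \in C(X^*)$, I would use the Tietze--Urysohn theorem to extend $F$ to some $\tilde F \in C(X \times Y^*)$, and set
$$
P^*(F)(y^*) := \int_X \tilde F(x, y^*) \, d\la_{\tet(y^*)}(x).
$$
Joint continuity of $\tilde F$ on the compact product $X \times Y^*$ together with continuity of the composite $y^* \mapsto \la_{\tet(y^*)}$ from $Y^*$ into $\Mf(X)$ (weak$^*$ topology) ensures that $P^*(F) \in C(Y^*)$.

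The main step, and the only genuinely technical one, is verifying that $P^*(F)$ does not depend on the choice of extension $\tilde F$. If $\tilde F_1$ and $\tilde F_2$ both extend $F$, then $H := \tilde F_1 - \tilde F_2$ vanishes identically on $X^*$. For any $y^* \in Y^*_0$ one has $\supp(\la_{\tet(y^*)}) \subset \pi^{-1}(\tet(y^*)) = y^*$ (from $\pi_* \la_{\tet(y^*)} = \del_{\tet(y^*)}$ combined with the defining property of $Y^*_0$), so $H(x, y^*)$ is integrated only over points $(x, y^*) \in X^*$, and the integral $\int H(x, y^*)\, d\la_{\tet(y^*)}(x)$ vanishes. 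Since this integral is a continuous function of $y^*$ and $Y^*_0$ is dense in $Y^*$, it vanishes throughout $Y^*$, establishing well-definedness.

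Finally, the four RIM axioms for $P^*$ are routine translations of those for $y \mapsto \la_y$. Positivity follows by choosing a nonnegative extension of a nonnegative $F$ (e.g.\ $\max(\tilde F, 0)$); the normalization $P^*(\ch) = \ch$ is immediate from $\tilde \ch = \ch$; for property~(3), given $h \in C(Y^*)$ the function $h \circ \pi^*$ admits the canonical extension $\tilde F(x,y^*) := h(y^*)$, yielding $P^*(h \circ \pi^*)(y^*) = h(y^*)$; and the $\Ga$-equivariance of $P^*$ follows from the equivariance identities $\ga_* \la_y = \la_{\ga y}$ and $\tet(\ga y^*) = \ga\tet(y^*)$. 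I do not anticipate further obstacles beyond the well-definedness step, after which the lemma is obtained by applying the equivalent section-formulation of the RIM property to $P^*$.
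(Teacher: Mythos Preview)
Your proposal is correct and takes essentially the same approach as the paper: both construct the section $y^* \mapsto \la_{\tet(y^*)} \times \del_{y^*}$ for $\pi^*$, and the only substantive issue is verifying that this measure is supported on $X^* = \{(x,y^*) : x \in y^*\}$. The paper does this directly, using the lower-semicontinuity of $y \mapsto \supp(\la_y)$ to show $\supp(\la_{\tet(y^*)}) \subset y^*$ for \emph{every} $y^* \in Y^*$, whereas you reach the same conclusion indirectly via the density of $Y^*_0$ and continuity of the integral; both arguments are valid and the Tietze detour, while unnecessary, does no harm.
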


\begin{proof}
We use the notations following the definition of the O-shadow diagram (\ref{O}).
Let $Y_0 \ni y_i \to y \in Y \setminus Y_0$ and let $Y^* \ni y^* = \lim \pi^{-1}(y_i)$.
By the lower-semicontinuity of the map $\la_y \mapsto {\supp}(\la_y)$
we see that ${\supp}(\la_y) \subset y^*$. Thus for every
such $y$ ${\supp}(\la_y) \subset \bigcap \{y^* \in Y^*:  y^* \subset \pi^{-1}(y)\}$.
It is now easy to check that the map
$y^* \mapsto \la_{\tet(y^*)} \times \del_{y^*}$ is a section for $\pi^*$.
\end{proof}

Given a homomorphism $\pi:(X,\Ga) \to (Y,\Ga)$ of minimal metric systems there is an associated  {\em RIM shadow diagram}
\begin{equation}\label{RIM}
\xymatrix
{
X \ar[d]_{\pi}  & \tilde{X}=X \vee \tilde{Y}
 \ar[l]_-{\tilde{\tet}} \ar[d]^{\tilde{\pi}} \\
Y  & \tilde{Y}\ar[l]^{\tet}
}
\end{equation}
the map $\tilde \pi$ has a RIM
and the maps $\theta$ and $\tilde\theta$ are strongly proximal.
It can be shown that every isometric extension has a RIM and is open.
See \cite{G0} for more details, also a treatment of SPI systems can
be found in \cite{G1}.

\br

We say that a minimal system \XGa is a
{\em strictly PI system} if there is an ordinal $\eta$
(which is countable when $X$ is metrizable)
and a family of systems
$\{(W_\iota,w_\iota)\}_{\iota\le\eta}$
such that (i) $W_0$ is the trivial system,
(ii) for every $\iota<\eta$ there exists a homomorphism
$\phi_\iota:W_{\iota+1}\to W_\iota$ which is
either proximal or equicontinuous
(isometric when $X$ is metrizable), (iii) for a
limit ordinal $\nu\le\eta$ the system $W_\nu$
is the inverse limit of the systems
$\{W_\iota\}_{\iota<\nu}$,  and
(iv) $W_\eta=X$.
We say that \XGa is a {\em PI-system} if there
exists a strictly PI system $\tilde X$ and a
proximal homomorphism $\theta:\tilde X\to X$.

If in the definition of (strictly) PI-systems we replace
proximal extensions by HP extensions (almost 1-1 extensions
in the metric case) we get the notion of (strictly) {\em HPI} ({\em AI-systems}
in the metric case).
If we replace the proximal extensions by trivial
extensions (i.e.\ we do not allow proximal
extensions at all) we have {\em I-systems}.
In this terminology the structure theorem for distal
systems (Furstenberg \cite{F}, 1963) can be stated as follows:
\begin{thm}\label{FST}
A metric minimal system is distal iff it is an I-system.
\end{thm}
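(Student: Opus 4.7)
The easy direction---that every I-system is distal---follows by a routine transfinite induction: the trivial system is distal; an equicontinuous (hence distal) extension of a distal system is distal; and an inverse limit of distal systems is distal, since any proximal pair in the inverse limit would project to a proximal pair at some stage.

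For the converse, assume $(X,\Ga)$ is a metric minimal distal system, fix $x_0 \in uX$, and let $A = \mathfrak{G}(X,x_0) \subseteq G$. The plan is to build, by transfinite recursion, a tower of factors $W_\iota$ of $X$ with $W_0$ the trivial system, each $W_{\iota+1} \to W_\iota$ isometric, and $W_\nu = \varprojlim_{\iota<\nu} W_\iota$ at limit ordinals. Suppose $W_\iota$ has been constructed as a factor of $X$, with Ellis group $F_\iota \supseteq A$. The factor map $X \to W_\iota$ is a distal extension, hence RIC, so Theorem \ref{eq-ext} applies and produces the largest intermediate equicontinuous extension $W_{\iota+1} \to W_\iota$, whose Ellis group is $F_{\iota+1} = A F_\iota'$. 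At limit ordinals, $W_\nu$ is again a factor of $X$, with Ellis group $F_\nu = \bigcap_{\iota<\nu} F_\iota$, and $X \to W_\nu$ is still distal, so the recursion may continue.

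This produces a descending transfinite sequence of $\tau$-closed subgroups $F_\iota$ lying between $A$ and $G$. Metrizability of $X$ restricts the sequence to a countable ordinal length (only countably many strict drops are possible), so it stabilizes at some countable $\eta$ with $F_\eta = F_{\eta+1} = A F_\eta'$. Once this is established, the theorem follows provided one can conclude $F_\eta = A$, i.e.\ $W_\eta = X$, which then exhibits $X = W_\eta$ as a strictly I-system.

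The last step is the main obstacle. Stabilization $F_\eta = A F_\eta'$ says only that there is no further nontrivial intermediate equicontinuous extension above $W_\eta$; what we need is the stronger conclusion that there is no nontrivial distal intermediate extension at all. My approach is to exploit the distality of $X \to W_\eta$ via the $\tau$-topological structure: one examines the derived series $F_\eta \supseteq F_\eta' \supseteq F_\eta'' \supseteq \cdots$ together with its interaction with $A$, and shows that if $A$ were strictly contained in $F_\eta$, the distality of the extension would force $A F_\eta' \subsetneq F_\eta$, contradicting stabilization. The delicate point is propagating this through all $\tau$-closed iterations and handling limit ordinals, and it is here that the distal hypothesis (rather than just openness or RIC of the extension) is genuinely used; the cardinality bound from metrizability of $X$ then ensures that the eventual descent to $A$ happens at a countable stage.
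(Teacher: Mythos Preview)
The paper does not supply its own proof of this statement; it is quoted as a background result due to Furstenberg and is explicitly noted (just after Theorem~\ref{structure}) to follow as a corollary of the general structure theorem for minimal systems. In that framework, when $X$ is distal every proximal extension in the canonical PI-tower is an isomorphism, and the terminal weakly mixing RIC extension $\pi_\infty$ must collapse because a nontrivial distal extension cannot be weakly mixing; hence $X = X_\infty = Y_\infty$ is strictly~I.

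Your outline sets up the same tower via Theorem~\ref{eq-ext}, which is fine, but it has a genuine gap at the decisive step. You correctly isolate the obstacle: from the stabilization $F_\eta = A F_\eta'$ you must deduce $F_\eta = A$. Your proposed attack---``examine the derived series $F_\eta \supseteq F_\eta' \supseteq \cdots$ and show that distality forces $AF_\eta' \subsetneq F_\eta$''---is not an argument but a restatement of what needs to be shown. Nothing in the $\tau$-topological machinery you have invoked so far distinguishes a distal extension from a general RIC one at this point; indeed, for a general RIC extension stabilization can occur with $A \subsetneq F_\eta$ (that is exactly when $\pi_\infty$ is nontrivial in Theorem~\ref{structure}). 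The missing ingredient is a substantive theorem: a RIC extension with no nontrivial intermediate equicontinuous factor is \emph{weakly mixing} (this is the EGS/McMahon result underlying Theorem~\ref{structure}). Granting that, distality of $X \to W_\eta$ immediately forces the extension to be trivial, since a distal weakly mixing extension has minimal, hence diagonal, $R_\pi$. Without invoking this (or reproving it), your final paragraph does not close the argument.
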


And, the Veech-Ellis structure theorem for point distal systems
(Veech \cite{V}, 1970 and Ellis \cite{E1}, 1973):

\begin{thm}
A metric minimal dynamical system is point distal iff it is an
AI-system.
\end{thm}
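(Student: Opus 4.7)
The plan is to prove the two directions separately: the forward direction by a direct transfinite induction along the AI tower, the reverse by constructing the tower through the shadow-diagram machinery together with the equicontinuous interpolation Theorem \ref{eq-ext}.

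For the direction AI $\Rightarrow$ point distal, let $\tilde X$ be a strictly AI system built from a tower $\{W_\iota\}_{\iota \leq \eta}$, and let $\theta : \tilde X \to X$ be an almost $1$-$1$, hence proximal, extension. I would show by transfinite induction along the tower that each $W_\iota$ contains a dense $G_\delta$ set $D_\iota$ of distal points. The base case is trivial. For an isometric step $W_{\iota+1} \to W_\iota$, a proximal pair upstairs projects to a proximal pair downstairs, hence to the diagonal, and lying in a common fiber the distality of the extension forces equality; thus $D_\iota$ pulls back to a dense $G_\delta$ in $D_{\iota+1}$, using that distal extensions are open. For an almost $1$-$1$ step $\phi : W_{\iota+1} \to W_\iota$, the singleton-fiber set $\{w : |\phi^{-1}(w)|=1\}$ is a dense $G_\delta$ in $W_\iota$; its intersection with $D_\iota$ pulls back bijectively into $D_{\iota+1}$, since a proximal pair upstairs projects into a singleton fiber over a distal point, forcing the diagonal. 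Limit ordinals are handled by inverse limits. Finally, the proximal image of a distal point is distal: if $\tilde x \in D_\eta$ and $(\theta \tilde x, x') \in P_X$, lifting $x'$ to some $\tilde x' \in \tilde X$ and using that $\theta$ is proximal gives $(\tilde x, \tilde x') \in P_{\tilde X}$, whence $\tilde x = \tilde x'$ and $x' = \theta \tilde x$.

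For the direction point distal $\Rightarrow$ AI, I would apply Zorn's lemma to the class of strictly AI factors of $X$ (closed under inverse limits by the limit-ordinal clause in the definition) to obtain a maximal factor map $\phi : X \to W$ with $W$ strictly AI, and claim that $\phi$ must be almost $1$-$1$, which exhibits $X$ as AI. Suppose for contradiction that $\phi$ is not almost $1$-$1$. Successively applying the O-shadow and RIC-shadow constructions to $\phi$ yields a RIC extension $\phi^{**} : X^{**} \to W^{**}$ whose shadow maps $X^{**} \to X$ and $W^{**} \to W$ are almost $1$-$1$ in the metric setting; in particular $W^{**}$ remains strictly AI, $X^{**}$ inherits point distality from $X$ along the almost $1$-$1$ shadow, and $\phi^{**}$ is still not almost $1$-$1$. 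Theorem \ref{eq-ext} then produces the maximal intermediate equicontinuous extension $\rho : Z \to W^{**}$ with Ellis group $\mathfrak{G}(Z,z_0) = A F'$, where $A = \mathfrak{G}(X^{**}, x_0^{**})$ and $F = \mathfrak{G}(W^{**}, y_0^{**})$. Granting $A F' \subsetneq F$, so that $\rho$ is a nontrivial isometric extension, concatenating $\rho$ with the almost $1$-$1$ step $W^{**} \to W$ and the strictly AI tower below $W$ produces a strictly AI factor of $X$ strictly extending $W$, contradicting maximality.

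The crucial ingredient, and the main expected obstacle, is the Ellis-group inequality $A F' \subsetneq F$, deduced from the RIC property of $\phi^{**}$, metric point distality of $X^{**}$, and the failure of $\phi^{**}$ to be almost $1$-$1$. This is the technical heart of the Veech-Ellis argument and must be carried out in the $\tau$-topology on $G$: a distal point $x_0^{**} = u x_0^{**}$ localizes the proximal relation on the $F$-orbit inside the fiber $(\phi^{**})^{-1}(y_0^{**})$, identifying $A$ with the $\tau$-closed stabilizer of $x_0^{**}$ in $F$, and the RIC structure of the fibers combined with the non-almost-$1$-$1$ hypothesis then prevents $F'$ from being absorbed into $A$ relative to $F$. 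Once this algebraic step is secured, the remainder of the construction is routine transfinite bookkeeping in the spirit of the PI structure theorem.
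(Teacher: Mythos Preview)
The paper does not prove this theorem; it is quoted as the Veech--Ellis structure theorem with references to \cite{V0} and \cite{E1}, so there is no proof in the paper to compare against. Your outline follows the classical strategy, and the forward direction (AI $\Rightarrow$ point distal) is essentially correct, though the final step pushing a distal point through the almost $1$-$1$ map $\theta$ is phrased loosely: from $(\theta\tilde x,\theta\tilde x')\in P_X$ you first get $p\in\beta\Ga$ with $\theta(p\tilde x)=\theta(p\tilde x')$, then use proximality of $\theta$ on the fiber to conclude $(p\tilde x,p\tilde x')\in P_{\tilde X}$, hence $(\tilde x,\tilde x')\in P_{\tilde X}$.

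There is, however, a genuine gap in the reverse direction. You write that ``successively applying the O-shadow and RIC-shadow constructions'' yields shadow maps $X^{**}\to X$ and $W^{**}\to W$ that are almost $1$-$1$. This is false: the RIC-shadow maps $\theta,\theta^*$ are merely \emph{proximal} (see the description of the RIC-shadow diagram in Section~\ref{sur-sec}), not highly proximal, even in the metric case and even when composed with an O-shadow. Consequently $W^{**}$ need not be strictly AI, and the contradiction with maximality of $W$ collapses. The point of the Veech--Ellis argument is precisely that for a \emph{point distal} extension the O-shadow alone (which does give almost $1$-$1$ maps) is enough: once $\pi^*$ is open, one shows directly that a nontrivial equicontinuous interpolation exists, without passing to a RIC modification. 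This is why Theorem~\ref{PD-rel} singles out the open case as the one giving a \emph{strictly} AI tower. Your invocation of Theorem~\ref{eq-ext}, which requires a RIC hypothesis, therefore does not apply, and the substitute argument --- producing the inequality $AF'\subsetneq F$ from openness plus point distality --- is exactly the technical core you defer at the end. As written, then, the proposal identifies the right architecture but does not supply the step that distinguishes the AI theorem from the easier PI theorem.
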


The structure theorem for the general minimal system is proved in
\cite{EGS} and \cite{McM} (see also \cite{V}) and asserts that
every minimal system admits a canonically defined proximal
extension which is a weakly mixing RIC extension of a strictly PI system.
Both the Furstenberg and the Veech-Ellis structure theorems are
corollaries of this general structure theorem.

\begin{thm}[Structure theorem for minimal systems]\label{structure}
Given a minimal system $(X,\Ga)$, there exists an ordinal $\eta$
(countable when $X$ is metrizable) and a canonically defined
commutative diagram (the canonical PI-Tower)
\begin{equation*}
\xymatrix
        {X \ar[d]_{\pi}             &
     X_0 \ar[l]_{{\theta}^*_0}
         \ar[d]_{\pi_0}
         \ar[dr]^{\sigma_1}         & &
     X_1 \ar[ll]_{{\theta}^*_1}
         \ar[d]_{\pi_1}
         \ar@{}[r]|{\cdots}         &
     X_{\nu}
         \ar[d]_{\pi_{\nu}}
         \ar[dr]^{\sigma_{\nu+1}}       & &
     X_{\nu+1}
         \ar[d]_{\pi_{\nu+1}}
         \ar[ll]_{{\theta}^*_{\nu+1}}
         \ar@{}[r]|{\cdots}         &
     X_{\eta}=X_{\infty}
         \ar[d]_{\pi_{\infty}}          \\
        pt                  &
     Y_0 \ar[l]^{\theta_0}          &
     Z_1 \ar[l]^{\rho_1}            &
     Y_1 \ar[l]^{\theta_1}
         \ar@{}[r]|{\cdots}         &
     Y_{\nu}                &
     Z_{\nu+1}
         \ar[l]^{\rho_{\nu+1}}          &
     Y_{\nu+1}
         \ar[l]^{\theta_{\nu+1}}
         \ar@{}[r]|{\cdots}         &
     Y_{\eta}=Y_{\infty}
    }
\end{equation*}
where for each $\nu\le\eta, \pi_{\nu}$
is RIC, $\rho_{\nu}$ is isometric, $\theta_{\nu},
{\theta}^*_{\nu}$ are proximal and
$\pi_{\infty}$ is RIC and weakly mixing.
For a limit ordinal
$\nu ,\  X_{\nu}, Y_{\nu}, \pi_{\nu}$
etc. are the inverse limits (or joins) of
$ X_{\iota}, Y_{\iota}, \pi_{\iota}$ etc. for $\iota
< \nu$.
Thus $X_\infty$ is a proximal extension of $X$ and a RIC
weakly mixing extension of the strictly PI-system $Y_\infty$.
The homomorphism $\pi_\infty$ is an isomorphism (so that
$X_\infty=Y_\infty$) iff $X$ is a PI-system.
\end{thm}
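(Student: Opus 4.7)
My plan is to construct the PI-tower by transfinite induction on $\nu$, alternating two moves: Theorem~\ref{eq-ext} to peel off the largest intermediate isometric extension $\rho_{\nu+1}$ above a given RIC extension $\pi_\nu$, and the RIC-shadow diagram to restore the RIC property once that isometric layer has been inserted. At limit ordinals I take inverse limits of everything in sight. The whole construction closes off at an ordinal $\eta$ (countable when $X$ is metrizable) because the Ellis groups of the $Y_\nu$ form a strictly decreasing transfinite chain between $A = \mathfrak{G}(X_\nu, x_0)$ and $G$; at the top the absence of further equicontinuous intermediate factors will force $\pi_\infty$ to be weakly mixing.

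\textbf{Successor and limit steps.} At $\nu = 0$ the map $X \to pt$ need not be RIC, so I apply the RIC-shadow diagram to it, producing $\theta_0^* : X_0 \to X$ proximal, $\pi_0 : X_0 \to Y_0$ RIC, and $\theta_0 : Y_0 \to pt$ (automatically proximal). Inductively, assuming $\pi_\nu$ is RIC, fix base points $x_0,\, y_0 = \pi_\nu(x_0)$ with $u x_0 = x_0$ and set $A_\nu = \mathfrak{G}(X_\nu, x_0)$, $F_\nu = \mathfrak{G}(Y_\nu, y_0)$. Applying Theorem~\ref{eq-ext} gives a factorization $\pi_\nu = \rho_{\nu+1} \circ \sigma_{\nu+1}$ with $\rho_{\nu+1} : Z_{\nu+1} \to Y_\nu$ isometric and $\mathfrak{G}(Z_{\nu+1}) = A_\nu F_\nu'$. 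Since $\sigma_{\nu+1}$ need not be RIC, I apply the RIC-shadow diagram to it, producing proximal maps $\theta_{\nu+1}^* : X_{\nu+1} \to X_\nu$ and $\theta_{\nu+1} : Y_{\nu+1} \to Z_{\nu+1}$ together with a RIC extension $\pi_{\nu+1} : X_{\nu+1} \to Y_{\nu+1}$. Because proximal extensions preserve Ellis groups, $A_{\nu+1} = A_\nu =: A$ is constant throughout the induction, while $F_{\nu+1} = \mathfrak{G}(Y_{\nu+1}) = \mathfrak{G}(Z_{\nu+1}) = A F_\nu'$. At a limit ordinal $\nu$ I set $X_\nu, Y_\nu, \pi_\nu$ to be the inverse limits of the earlier stages; one checks that RIC is preserved under inverse limits, so $\pi_\nu$ is again RIC, and $F_\nu = \bigcap_{\iota < \nu} F_\iota$ while $A$ stays fixed.

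\textbf{Stabilization and weak mixing.} The groups $F_\nu$ form a decreasing transfinite chain of $\tau$-closed subgroups of $G$ containing $A$, with $F_{\nu+1} = A F_\nu' \subsetneq F_\nu$ as long as $\rho_{\nu+1}$ is not an isomorphism (by the last clause of Theorem~\ref{eq-ext}). In the metric case such a chain must stabilize at a countable ordinal $\eta$, because each $Y_\nu$ is a metric minimal factor of a universal model and the lattice of such factors is of countable dimension. At the stopping stage we have $F_\eta = A F_\eta'$, so Theorem~\ref{eq-ext} shows there is no nontrivial intermediate isometric extension above $\pi_\infty = \pi_\eta$. By construction $Y_\infty = Y_\eta$ is built from a point by alternating isometric and proximal extensions together with inverse limits at limit stages, hence is strictly PI; and the composite of the $\theta_\nu^*$ (with limits at limit ordinals) is a proximal extension $X_\infty \to X$, since composition and inverse limits of proximal maps are proximal. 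What remains is the implication \emph{``a RIC extension with no nontrivial intermediate equicontinuous factor is weakly mixing''}, which I view as the main obstacle: it is the relative Furstenberg-type dichotomy for RIC extensions and constitutes the deep analytic input needed to finish the theorem. Granted this dichotomy, $\pi_\infty$ is weakly mixing and the proof is complete.
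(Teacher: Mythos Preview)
The paper does not supply a proof of this theorem; it is quoted as a known background result from \cite{EGS} and \cite{McM} (see the sentence preceding the statement). So there is nothing in the paper to compare your argument against directly.

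That said, your outline is essentially the standard construction in those references: alternate the RIC-shadow diagram with the interpolation Theorem~\ref{eq-ext}, take inverse limits at limit ordinals, and stop when the interpolated equicontinuous factor is trivial. You have also correctly isolated the one genuinely nontrivial ingredient, namely that a RIC extension with no nontrivial intermediate equicontinuous extension is relatively weakly mixing; this is precisely the content supplied by \cite{McM} and, in a different form, by \cite{EGS}, and without it the tower construction by itself proves nothing about $\pi_\infty$. Two minor points: your stabilization argument in the metric case is a bit loose --- the clean way is to observe that a metric minimal system has only countably many factors, so a strictly increasing chain of $Y_\nu$'s must have countable length --- and you should note that in the non-metric case stabilization at \emph{some} ordinal follows simply because the $F_\nu$ form a strictly decreasing chain of subsets of the fixed set $G$.
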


Two useful criteria on minimal systems for being PI and HPI 
are given in theorems of Bronstein \cite{Bro}
and  van der Woude \cite{vdW}, respectively.

In the sequel we will mostly deal with metrizable systems. So from now on,
unless we explicitly say
otherwise {\bf all the dynamical systems will be assumed to be metrizable.}

\br

All of the above mentioned theorems have their relative versions.
One starts with a homomorphism $\pi : (X, \Ga) \to (Y, \Ga)$ between
minimal systems and then all the various notions defined above,
equicontinuity, distality, point-distality etc. have natural relative analogues.

We will use two such relative theorems which I now proceed to describe.

\begin{defn}\label{Y-pd-def}
Let $\pi : X \to Y$ be a homomorphism of minimal systems. 
Then $\pi$ is a {\em point distal extension} if
there is a point $x_0 \in X$ such that
$P[x_0] \cap \pi^{-1}(y_0) =\{x_0\}$, where $y_0 = \pi(x_0)$.
A point such as $x_0$ is called a {\em $Y$-distal point}.
\end{defn}
%

%

\begin{thm}[Veech \cite{V0}, Ellis \cite{E3}]\label{PD-rel}
Let $\pi : X \to Y$ be a homomorphism of metric minimal systems. 
Then $\pi$ is point distal iff $\pi$ is an AI extension and the
set of $Y$-distal points is a dense $G_\del$ subset of $X$.
If $\pi$ is open then it is point distal iff it is strictly AI.
\end{thm}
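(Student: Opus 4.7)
The ``if'' direction is immediate: a dense $G_\del$ set is non-empty, so it contains a $Y$-distal point, which is precisely the definition of point distality of $\pi$. For the converse, I would apply the relative form of the structure theorem (Theorem \ref{structure}) to $\pi:X\to Y$, obtaining a canonical PI-tower over $Y$
\[
\xymatrix{
X \ar[d]_\pi & X_\infty \ar[l]_\theta \ar[d]^{\pi_\infty}\\
Y & Y_\infty \ar[l]^\sigma
}
\]
in which $\theta$ is a proximal extension, $\pi_\infty$ is RIC and weakly mixing relative to $Y$, and $\sigma$ is strictly PI over $Y$. The goal is to use the existence of a $Y$-distal point to show that $\pi_\infty$ is an isomorphism and that each proximal step in the tower building $\sigma$ is in fact highly proximal.

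\textbf{Core argument.} Fix a $Y$-distal point $x_0\in X$, set $y_0=\pi(x_0)$, pick a minimal idempotent $u$ with $ux_0=x_0$, and lift $x_0$ to $\tilde x_0=u\tilde x_0\in X_\infty$ with $\theta(\tilde x_0)=x_0$. For any $x'\in\pi_\infty^{-1}(\pi_\infty(\tilde x_0))$, since $\theta$ is a proximal extension the pair $(\theta(x'),x_0)$ is proximal in $\pi^{-1}(y_0)$, so $Y$-distality of $x_0$ forces $\theta(x')=x_0$. Hence the entire fiber $\pi_\infty^{-1}(\pi_\infty(\tilde x_0))$ lies inside $\theta^{-1}(x_0)$, whose points are pairwise proximal. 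Applying the relative form of Furstenberg's lemma --- a RIC weakly mixing extension one of whose fibers is trapped inside a single proximal cell must be an isomorphism --- yields that $\pi_\infty$ is an isomorphism. Identifying $X_\infty$ with $Y_\infty$, the same lifting-and-proximality argument, applied inductively to each proximal step in the canonical tower building $\sigma$, upgrades each such step to a highly proximal (hence, in the metric case, almost 1-1) extension. Thus $\pi$ is AI.

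\textbf{Dense $G_\del$ and the open case.} The set $X_d$ of $Y$-distal points is a $G_\del$ by a standard upper-semicontinuity argument for the proximal relation within the fibers of $\pi$ (writing $X_d=\bigcap_n U_n$ for an appropriate decreasing family of open sets indexed by a countable basis of neighborhoods of the diagonal of $X$). Density follows directly from the AI structure: the structure group of each isometric step in the tower acts by $\pi$-equivariant automorphisms, and the resulting orbit of $x_0$ is dense in $X$ and consists entirely of $Y$-distal points. If in addition $\pi$ is open, then since isometric extensions are automatically open, applying the O-shadow diagram iteratively at each stage of the AI tower shows that the top proximal extension $\theta$ can be taken to be trivial, so $\pi:X\to Y$ is itself strictly AI.

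\textbf{Main obstacle.} The key technical point is the relative Furstenberg lemma invoked in the core argument: one must show that a RIC weakly mixing extension cannot possess a fiber entirely contained in a single proximal cell unless it is an isomorphism. This is a fiberwise relativization of Furstenberg's classical assertion that a weakly mixing minimal system with a distal point is trivial, and it requires careful manipulation of minimal left ideals, minimal idempotents, the $\tau$-closed Ellis groups $\mathfrak{G}(X,x_0)$ and $\mathfrak{G}(X_\infty,\tilde x_0)$, and the circle operation, in order to transfer the proximal/distal dichotomy from the base system into the fibers of $\pi_\infty$.
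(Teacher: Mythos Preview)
The paper does not supply a proof of this theorem: it is quoted as a background result and attributed to Veech \cite{V0} and Ellis \cite{E3}. So there is no ``paper's own proof'' to compare against; your proposal is really a sketch of the classical Veech--Ellis argument.

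As such a sketch, your outline is on the right track for the collapse of $\pi_\infty$, but two of the subsequent steps are not correct as written. First, the upgrade of the proximal stages in the PI tower to highly proximal does \emph{not} follow from ``the same lifting-and-proximality argument.'' The canonical tower in Theorem~\ref{structure} is built with RIC-shadow diagrams, whose horizontal maps are merely proximal; to obtain an AI tower one must rebuild using O-shadow diagrams and check that point distality of $\pi$ is inherited by the O-shadow $\pi^*$ so that the inductive step can be restarted --- this is the substance of Ellis's argument in \cite{E3}, and it is not a consequence of your fiber-proximality observation. Second, your density argument is wrong: the structure group of an isometric stage acts on that intermediate system, not on $X$, and there is no reason its orbit through a lift of $x_0$ should be dense in $X$. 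The standard argument reads density off the AI tower directly: the set of points lying over the one-to-one locus of every almost one-to-one stage is a dense $G_\delta$, and each such point is $Y$-distal because isometric extensions are distal and almost one-to-one fibers are singletons there.
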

%

A homomorphism $\pi : (X,\Ga) \to (Y,\Ga)$ is called {\em semiopen}
if the interior of $\pi(U)$ is nonempty for every nonempty open subset
$U$ of $X$. When $X$ is minimal every
$\pi : (X,\Ga) \to (Y,\Ga)$ is semiopen (see Lemma \ref{so-min} below).
We will say that a subset $W \subset X\times X$ is a {\em S-set}
if it is closed, invariant, topologically transitive, and the restriction
to $W$ of the projection maps are semiopen.



\begin{thm}[van der Woude,  \cite{vdW}]\label{vdw-rel}
A homomorphism $\pi : X \to Y$ between metric minimal systems $(X,\Ga)$ and $(Y,\Ga)$ 
is a point distal extension iff every S-set of $R_\pi  = \{(x, x') \in X \times X : \pi(x) = \pi(x')\}$ 
is minimal.
\end{thm}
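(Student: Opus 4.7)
My plan is to combine Theorem~\ref{PD-rel} with a standard minimal-idempotent trick. By Theorem~\ref{PD-rel} the set $X_d$ of $Y$-distal points is a dense $G_\del$ in $X$. For any S-set $W \subseteq R_\pi$, the projections $p_1, p_2 \colon W \to X$ are semiopen surjections onto the minimal $X$, so preimages of dense sets stay dense; hence $p_1^{-1}(X_d) \cap p_2^{-1}(X_d)$ is a dense $G_\del$ in the Baire space $W$. Intersecting with the dense $G_\del$ of transitive points of $W$ yields a transitive point $(a,b)\in W$ with $a,b\in X_d$. Writing $y=\pi(a)=\pi(b)$, I would pick a minimal idempotent $u\in\beta\Ga$ with $uy=y$ (possible since $y$ is a minimal point of $Y$). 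Then $\pi(ua)=uy=y$ and $(a,ua)$ is $u$-proximal, so the $Y$-distality of $a$ forces $ua=a$; the same argument applied to $b$ gives $ub=b$. Hence $u(a,b)=(a,b)$, so $(a,b)$ is almost periodic in $X\times X$, and $W=\overline{\Ga\cdot(a,b)}$ is minimal.

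\textbf{Reverse direction (every S-set of $R_\pi$ is minimal $\Rightarrow$ $\pi$ point distal).} I would argue by contraposition: assume $\pi$ is not point distal, so no $x\in X$ is $Y$-distal, and try to construct a non-minimal S-set in $R_\pi$ to contradict the hypothesis. Fix $x_0\in X$ and a minimal idempotent $u\in\beta\Ga$ with $u\pi(x_0)=\pi(x_0)$ and $ux_0\ne x_0$; then $(x_0,ux_0)$ is a nontrivial proximal pair in $\pi^{-1}(\pi(x_0))$. Set $W:=\overline{\Ga\cdot(x_0,ux_0)}\subseteq R_\pi$. This $W$ is closed, $\Ga$-invariant, and topologically transitive (orbit closure of a single point), and from $u(x_0,ux_0)=(ux_0,ux_0)\in W$ combined with the minimality of $X$ one deduces $\Delta_X\subseteq W$. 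Since $(x_0,ux_0)\in W\setminus\Delta_X$, $W$ strictly contains the minimal set $\Delta_X$ and is therefore not minimal.

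\textbf{Main obstacle.} The decisive step is verifying that the projections $p_1,p_2\colon W\to X$ are semiopen, so that $W$ qualifies as a bona fide S-set; the naive orbit closure of a proximal pair need not have this property. My plan to overcome this is to replace $W$, if necessary, by passing through the O- or RIC-shadow diagrams~\eqref{O}, \eqref{RIM} applied to the map $p_1\colon W\to X$: these produce a related closed invariant topologically transitive subset of $R_\pi$ whose projections to $X$ inherit semiopenness from Lemma~\ref{so-min} applied to the minimal ambient system, while still containing a nontrivial proximal pair together with the diagonal. Once such a non-minimal S-set is exhibited, the standing hypothesis is violated and $\pi$ must be point distal. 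The forward direction, by contrast, is essentially routine once Theorem~\ref{PD-rel} and the idempotent trick are in hand.
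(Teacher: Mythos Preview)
The paper does not supply its own proof of this theorem; it is quoted as a result of van der Woude \cite{vdW} and used only as a black box (in one direction, inside the proof of Theorem~\ref{PD}). So there is no in-paper argument to compare against, and your attempt must be judged on its own merits.

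Your forward direction is correct and is in fact the standard argument. The only point worth tightening is the sentence ``pick a minimal idempotent $u$ with $uy=y$'': you should note that since $X$ is minimal, $p_i(W)=X$ automatically, so the semiopen projections are genuine surjections and Lemma~\ref{so-lem} applies to give density of $p_i^{-1}(X_d)$.

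The reverse direction, however, has a real gap that your own diagnosis already locates but does not close. The orbit closure $W=\overline{\Ga\cdot(x_0,ux_0)}$ of a nontrivial proximal pair is indeed closed, invariant, transitive, and strictly contains $\Delta_X$; the issue is precisely that nothing forces its coordinate projections to be semiopen, and without that $W$ is not an S-set and yields no contradiction. Your proposed repair via the O- or RIC-shadow diagrams does not work as stated: those constructions (see \eqref{O} and the RIC-shadow diagram) are defined for homomorphisms between \emph{minimal} systems, whereas here $W$ is by design non-minimal, so you cannot feed $p_1\colon W\to X$ into them. Moreover, even if one could produce a related transitive set with semiopen projections, you would still need to ensure it is non-minimal, and the passage through shadow diagrams typically alters the set in ways that could destroy the proximal pair witnessing non-minimality. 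Van der Woude's actual proof of this implication is considerably more delicate and goes through the HPI structure theory rather than a direct construction of a bad S-set; your contrapositive strategy, while natural, does not seem to lead to a proof without substantial new input.
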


\br

\section{On semiopen maps}\label{so-sec}

\begin{lem}\label{so-lem}
Let $\pi: X\to Y$ be a continuous surjection between
compact Hausdorff spaces. The conditions {\rm 1} and {\rm 2}
below are equivalent.
If $X$ is metrizable then the three conditions
are equivalent:
\begin{enumerate}
\item
$\pi$ is semiopen.
\item
The preimage of every dense subset in $Y$ is dense in $X$.
\item
The set
\begin{equation*}
X_0 = \{x\in X: \text{ the set valued map $\pi^{-1}: Y \to 2^X$
is continuous at $\pi(x)$}\}
\end{equation*}
is dense in $X$. 
\end{enumerate}
\end{lem}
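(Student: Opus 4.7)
My plan is to treat the three equivalences in sequence: first the unconditional (1)$\Leftrightarrow$(2), then (3)$\Rightarrow$(1) (which also does not need metrizability), reserving the metric hypothesis for (1)$\Rightarrow$(3).

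For (1)$\Leftrightarrow$(2) I exploit the elementary duality that a subset $A\subset Y$ has empty interior iff $Y\setminus A$ is dense. Assume (1) and let $D\subset Y$ be dense; for any nonempty open $U\subset X$, semiopenness provides a point of $D$ inside $\pi(U)^{\circ}$, which lifts to a point of $U\cap\pi^{-1}(D)$, so $\pi^{-1}(D)$ is dense. Conversely, if $\pi$ is not semiopen, pick a nonempty open $U$ with $\pi(U)^{\circ}=\emptyset$; then $D=Y\setminus\pi(U)$ is dense in $Y$, while $\pi^{-1}(D)\cap U=\emptyset$, violating (2).

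For (3)$\Rightarrow$(1) I invoke the lower-semicontinuity half of the continuity of $\pi^{-1}:Y\to 2^X$ at points of $\pi(X_0)$. Given a nonempty open $U\subset X$, choose $x_0\in X_0\cap U$ and set $y_0=\pi(x_0)$; since $\pi^{-1}$ is continuous at $y_0$ and the open set $U$ meets $\pi^{-1}(y_0)$, there is a neighborhood $W$ of $y_0$ with $\pi^{-1}(y)\cap U\neq\emptyset$ for every $y\in W$. This says $W\subset\pi(U)$, so $y_0\in\pi(U)^{\circ}$ and $\pi$ is semiopen.

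The main step, and the only one requiring metrizability, is (1)$\Rightarrow$(3). I aim to show that the set $Y_0$ of continuity points of $\pi^{-1}$ is dense in $Y$; combined with the already-established (2), this forces $X_0=\pi^{-1}(Y_0)$ to be dense in $X$. Upper-semicontinuity of $\pi^{-1}:Y\to 2^X$ is automatic from compactness, so failure of continuity at $y$ is failure of lower-semicontinuity, and unfolding this against a countable base $\{V_n\}$ of $X$ gives
\[
Y\setminus Y_0 \;=\; \bigcup_{n\in\N}E_n,\qquad E_n:=\pi(V_n)\setminus\pi(V_n)^{\circ}.
\]
Each $V_n$ is $\sigma$-compact, so each $\pi(V_n)$ and hence each $E_n$ is $F_\sigma$ in $Y$. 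Moreover any open $W\subset E_n$ would lie in $\pi(V_n)$ and therefore in $\pi(V_n)^{\circ}$, contradicting $E_n\cap\pi(V_n)^{\circ}=\emptyset$; so $E_n$ has empty interior. Writing $E_n$ as a countable union of closed sets, each of those closed pieces must also have empty interior, hence be nowhere dense, so $E_n$ and therefore $\bigcup_n E_n$ are meager. As $Y$ is Baire, $Y_0$ is comeager and in particular dense. The obstacle here is mild once the framework is set: metrizability is used precisely to produce a countable base of $X$, which is what converts the discontinuity set into a meager $F_\sigma$ subset of $Y$.
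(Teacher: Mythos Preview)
Your proof is correct and follows the same overall route as the paper: the equivalence (1)$\Leftrightarrow$(2) is handled directly, (3)$\Rightarrow$(1) is done via continuity of $\pi^{-1}$ at a point of $X_0$, and (1)$\Rightarrow$(3) is obtained by first showing that the continuity set $Y_0\subset Y$ of the upper-semicontinuous map $\pi^{-1}$ is dense and then invoking (2) to pull this back to density of $X_0=\pi^{-1}(Y_0)$. The only difference is one of detail: the paper simply cites the standard fact that an upper-semicontinuous set-valued map into $2^X$ with $X$ metrizable has a dense $G_\delta$ set of continuity points, whereas you supply an explicit proof of this via the decomposition $Y\setminus Y_0=\bigcup_n\bigl(\pi(V_n)\setminus\pi(V_n)^{\circ}\bigr)$ and a Baire category argument.
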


\begin{proof}
The equivalence of 1 and 2 is straightforward. For any
continuous surjection $\pi: X\to Y$ the corresponding set map
$\pi^{-1}: Y \to 2^X$ is upper-semicontinuous and, when $X$ is metrizable,
this implies that it has a dense $G_\del$ subset $Y_0\subset Y$ of
continuity points.
Assuming 2 we conclude that $X_0=\pi^{-1}(Y_0)$ is a dense $G_\del$
subset of $X$. Conversely if 3 is valid and $U\subset X$ is
open and nonempty, then $U \cap X_0 \not=\emptyset$ and if $x_0$
is any point in this intersection then $\pi$ is open at $x_0$,
so that $\pi(U)$ is a neighborhood of $\pi(x_0)$ and we conclude
that $\pi(U)^\circ \not=\emptyset$.
\end{proof}

\begin{lem}\label{so-res-lem}
Let $X \overset{\al}{\to} Z \overset{\beta}{\to} Y$, where $X$, $Y$ and
$Z$ are compact metric spaces, $\al$ and $\beta$ are continuous surjections,
and $\al$ is semiopen. For $y \in Y$ set $Z_y = \beta^{-1}(y)$ and
$X_y = (\beta \circ \al)^{-1}(y) = \al^{-1}(Z_y)$.
Then there is a dense $G_\del$ subset $Y_0 \subset Y$
such that for every $y \in Y_0$ the restriction $\al : X_y \to Z_y$ is semiopen.
\end{lem}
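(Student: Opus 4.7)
The plan is to use Lemma \ref{so-lem} to translate semiopenness of each restriction $\al|_{X_y}$ into a density condition on the fiber, and then extract a dense $G_\del$ of such $y$'s by a Baire-category argument that crucially uses the semiopenness of $\al$.

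By Lemma \ref{so-lem} applied to $\al$, the set $Z_c$ of continuity points of the set-valued map $\al^{-1} : Z \to 2^X$ is a dense $G_\del$ in $Z$, and by condition~(2) of that lemma, $X_c := \al^{-1}(Z_c)$ is a dense $G_\del$ in $X$. The key observation is that if $x \in X_c \cap X_y$, continuity of $\al^{-1}$ at $\al(x) \in Z_c \cap Z_y$ restricts to continuity of the map $(\al|_{X_y})^{-1}: Z_y \to 2^{X_y}$ at $\al(x)$, since Vietoris convergence is inherited by closed subspaces. Thus if $X_c \cap X_y$ is dense in $X_y$, Lemma \ref{so-lem}(3) applied to $\al|_{X_y}$ gives that $\al|_{X_y}$ is semiopen. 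Writing $X_c = \bigcap_n W_n$ with each $W_n \subset X$ open and dense, Baire category inside the compact metric fiber $X_y$ reduces density of $X_c \cap X_y$ in $X_y$ to the density of each $W_n \cap X_y$ in $X_y$.

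Fix a countable base $\{U_k\}$ for $X$ and set
\begin{equation*}
E_{k, n} := \{y \in Y : U_k \cap X_y \neq \emptyset,\ W_n \cap U_k \cap X_y = \emptyset\},
\end{equation*}
so that $Y_0 := Y \setminus \bigcup_{k, n} E_{k, n}$ is precisely the set of $y$'s with $X_c \cap X_y$ dense in $X_y$. It suffices to show each $E_{k, n}$ is nowhere dense in $Y$. Set $V_k := \al(U_k)^\circ$ and $V_{k, n} := \al(U_k \cap W_n)^\circ$; both are nonempty open subsets of $Z$ by semiopenness of $\al$. A short argument shows $V_{k, n}$ is dense in $V_k$: for any open $N \subset Z$ meeting $V_k$, pick $x \in U_k$ with $\al(x) \in N$; then $U_k \cap W_n \cap \al^{-1}(N)$ is nonempty open (since $W_n$ is open dense), and applying semiopenness of $\al$ to it produces a nonempty open subset of $V_{k, n} \cap N$. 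The inclusion $E_{k, n} \subset \pi(U_k) \setminus \beta(V_{k, n})$, with $\pi := \beta \circ \al$, is immediate from the definitions.

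The hardest step is showing $E_{k, n}$ is nowhere dense, not merely of empty interior. The approach is by contradiction: given a hypothetical nonempty open $O \subset \ol{E_{k, n}}$, the open set $U_k \cap \pi^{-1}(O) \subset X$ is nonempty (extract a limit of witnesses $x_i \in U_k \cap (X \setminus W_n)$ with $\pi(x_i)$ approaching a point of $O$), and semiopenness of $\al$ applied to it yields a nonempty open $V' \subset V_k \cap \beta^{-1}(O)$. Density of $V_{k, n}$ in $V_k$ forces $V' \cap V_{k, n} \neq \emptyset$, yielding a point $y' \in O \cap \beta(V_{k, n})$. The clinching step—the main obstacle—is to argue that such a $y'$ contradicts $O \subset \ol{E_{k, n}}$; this requires a careful closure argument tracking the persistence of the defining condition of $E_{k, n}$ through limits, using the upper semicontinuity of $\pi^{-1} : Y \to 2^X$ to propagate the condition $W_n \cap U_k \cap X_{y} = \emptyset$ from $E_{k, n}$ into a neighborhood of $y'$. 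Once nowhere-density of each $E_{k, n}$ is established, $\bigcup_{k, n} E_{k, n}$ is meager, $Y_0$ contains a dense $G_\del$ in $Y$, and the lemma follows.
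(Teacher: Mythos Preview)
Your approach differs from the paper's: you route through continuity points of $\al^{-1}$ and try to show the exceptional sets $E_{k,n}=\pi(U_k)\setminus\pi(U_k\cap W_n)$ are nowhere dense, whereas the paper works directly with the open sets $\al(V)^\circ$ in $Z$, defining for each basic $V\subset X$ the set
\[
Y_V=\{y:\al(V)^\circ\cap Z_y\neq\emptyset\}\cup\{y:\ol{V}\cap X_y=\emptyset\},
\]
and proving each $Y_V$ is open and dense. That formulation is chosen precisely so that openness of $Y_V$ is immediate from closedness of the complement, sidestepping the obstacle you run into.

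The gap in your argument is real and is exactly where you flag it. From $O\subset\ol{E_{k,n}}$ you produce $y'\in O$ with $y'\notin E_{k,n}$; this only shows $E_{k,n}$ has empty interior, not that it is nowhere dense. The fix you propose --- using upper semicontinuity of $\pi^{-1}$ to propagate the condition $W_n\cap U_k\cap X_y=\emptyset$ to a neighborhood of $y'$ --- does not work: if $y_i\to y'$ with $y_i\in E_{k,n}$, upper semicontinuity gives $\limsup X_{y_i}\subset X_{y'}$, but the emptiness condition $X_{y_i}\subset(W_n\cap U_k)^c$ passes only to $\limsup X_{y_i}$, not to the (possibly larger) fiber $X_{y'}$. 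So you cannot conclude $y'\in E_{k,n}$, and no contradiction follows. Equally, the open set $V'\cap V_{k,n}\subset Z$ you construct has $\beta$-image inside $O\setminus E_{k,n}$, but $\beta$ is not assumed open or semiopen, so this image need not contain any open subset of $O$.

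Concretely, the paper avoids this by replacing your condition ``$W_n\cap U_k\cap X_y\neq\emptyset$'' (which is $y\in\pi(W_n\cap U_k)$, generally neither open nor closed) with ``$\al(V)^\circ\cap Z_y\neq\emptyset$'' (which \emph{is} open in $y$, being $y\in\beta(\al(V)^\circ)$ with $\al(V)^\circ$ open --- no wait, more carefully, its complement $\{y:Z_y\subset(\al(V)^\circ)^c\}$ is closed by upper semicontinuity of $\beta^{-1}$). This is the missing idea: work with the open set $\al(V)^\circ$ in $Z$ rather than with preimages of the continuity set $Z_c$, so that the relevant ``good'' condition on $y$ becomes manifestly open.
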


\begin{proof}
Let $V \subset X$ be a nonempty open subset.
Set 
$$
Y_V = \{y \in Y : \al(V)^\circ \cap Z_y \not = \emptyset\}\
\cup \ \{y \in Y : \ol{V} \cap X_y = \emptyset\} : =
Y_{V,1} \cup Y_{V,2}.
$$
We will show that $V_Y$ is open and dense.

Suppose $y_n \to y$ is a convergent sequence with $y_n \not\in Y_V$.
Then, as  $Y_V^c = Y_{V,1}^c \cap Y_{V,2}^c$, we have
\begin{itemize}
\item
For all $n$
\begin{gather*}
y_n \in Y_{V,1}^c \ \imp \ \al(V)^\circ \cap Z_{y_n} = \emptyset
 \ \imp \ Z_{y_n} \subset (\al(V)^\circ)^c \ \imp \ \\
Z_y \subset (\al(V)^\circ)^c \ \imp\ 
\al(V)^\circ \cap Z_y = \emptyset
 \ \imp\ 
y \in Y_{V,1}^c.
\end{gather*}

\item
And, for all $n$
\begin{gather*}
y_n \in Y_{V,2}^c \ \imp \ \ol{V} \cap X_{y_n} \not = \emptyset 
\ \imp \  \ol{V}  \cap \limsup X_{y_n} \not= \emptyset \ \imp \ \\
\ol{V} \cap X_y  \not= \emptyset  \ \imp \ 
y \in Y_{V,2}^c.
\end{gather*}
\end{itemize}
This shows that $Y_V$ is open.

Next let $O \subset Y$ be a nonempty open set. We will show that
$O \cap Y_V \not = \emptyset$.
\begin{itemize}
\item
Case1 : Suppose $\beta^{-1}(O) \cap \ol{\al(V)} \not=\emptyset$.

Then $\beta^{-1}(O) \cap (\al(V)^\circ \cup \partial(\al(V)) \not=\emptyset$.
As $\al$ is a semiopen map, the set $\partial(\al(V))$ has an empty interior
and it follows that $\beta^{-1}(O) \cap \al(V)^\circ  \not=\emptyset$.
Let $z$ be a point in this intersection and set $y = \beta(z)$. Then
$y \in O$ and $z \in  \al(V)^\circ \cap Z_y$ imply that $O \cap Y_{V,1} \not=\emptyset$.
\item
Case 2 : Suppose $\beta^{-1}(O) \cap \ol{\al(V)} =\emptyset$.

It then follows that for every $y \in O$ we have $\ol{V} \cap X_y = \emptyset$,
whence $y \in O \cap Y_{V,2}$.
\end{itemize}

Let now $\{V_i\}_{ \in \N}$ be an enumeration of a basis for open sets on $X$.
Set $Y_i = Y_{V_i}$ and let $Y_0 = \cap_{i \in \N} Y_i$. 

We now check that
the set $Y_0 \subset Y$ satisfies the requirement of the lemma.
In fact, suppose $y \in Y_0$. We want to show that $\al : X_y \to Z_y$ is semiopen.
Let then $U \subset X_y$ be a nonempty open subset. With no loss
of generality we assume that $U = V \cap X_y$ for some $V$ in our basis $\{V_i\}_{i \in \N}$.
In particular then $V \cap X_y \not=\emptyset$, whence
$y \not \in Y_{V,2}$. But $y \in Y_0 \subset Y_V$, hence $y \in Y_{V,1}$,
i.e. $\al(V)^\circ \cap Z_y \not=\emptyset$.
If $z$ is a point in this intersection then $z = \al(x)$ for some $x \in V \cap X_y = U$.
Thus $\al(U)^\circ \not=\emptyset$ and our proof is complete.
\end{proof}

\begin{cor}\label{so-res}
Let $\pi: X \to Y$ be a continuous semiopen surjection between
compact metric spaces. Let $R_\pi = \{(x,x') \in X \times X : \pi(x) = \pi(x')\}$.
Suppose $W \subset R_\pi$ is a closed subset such that $(\pi \times \pi)(W) = \Del_Y
= \{(y, y) : y \in Y\}$.
Suppose further that the restrictions of the projection maps ${\bf{p}}_i : X \times X \to X \ (i=1,2)$
to $W$ are semiopen. Then there is a dense $G_\del$ subset $Y_{so} \subset Y$ such that
for each $y \in Y_{so}$ the maps ${\bf{p}}_i : W \cap (\pi^{-1}(y) \times \pi^{-1}(y)) \to \pi^{-1}(y)$
are semiopen.
\end{cor}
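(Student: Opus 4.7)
The plan is to apply Lemma \ref{so-res-lem} twice, once for each projection $\mathbf{p}_i$, $i = 1,2$, and to intersect the two resulting dense $G_\del$ subsets of $Y$.

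For each $i$ consider the composition
\[
W \;\overset{\mathbf{p}_i}{\longrightarrow}\; X \;\overset{\pi}{\longrightarrow}\; Y,
\]
in which the first map is semiopen by hypothesis and the second is a continuous semiopen surjection; moreover $\pi \circ \mathbf{p}_i : W \to Y$ is surjective, since the hypothesis $(\pi\times\pi)(W) = \Del_Y$ says exactly that each fibre of $\pi$ is hit by $W$. Lemma \ref{so-res-lem} then furnishes a dense $G_\del$ subset $Y_0^{(i)} \subset Y$ such that, for every $y \in Y_0^{(i)}$, the restriction $\mathbf{p}_i : (\pi\circ\mathbf{p}_i)^{-1}(y) \to \pi^{-1}(y)$ is semiopen.

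The inclusion $W \subset R_\pi$ makes the identification of the fibre trivial: for every $(x, x') \in W$ one has $\pi(x) = \pi(x')$, so the two \emph{a priori} different sets $(\pi \circ \mathbf{p}_1)^{-1}(y)$ and $(\pi \circ \mathbf{p}_2)^{-1}(y)$ both equal $W_y := W \cap (\pi^{-1}(y) \times \pi^{-1}(y))$. Setting $Y_{so} := Y_0^{(1)} \cap Y_0^{(2)}$, which is again a dense $G_\del$ subset of the Baire space $Y$, one obtains the desired semi-openness of both maps $\mathbf{p}_i : W_y \to \pi^{-1}(y)$ for every $y \in Y_{so}$. The only subtle point is that Lemma \ref{so-res-lem} is formally stated for a surjection as first map, whereas $\mathbf{p}_i \rest W$ need not surject onto $X$; however, inspection of the proof of the lemma shows that only semi-openness of the first map is actually used (the density of the sets $Y_V$ rests on the fact that $\partial \al(V)$ has empty interior, which holds whenever $\al$ is semiopen), so no modification is needed. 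Alternatively one may simply replace the middle factor $X$ by the compact image $\mathbf{p}_i(W) \subset X$ before applying the lemma.
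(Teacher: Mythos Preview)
Your proof is correct and follows essentially the same approach as the paper, which simply says: apply Lemma \ref{so-res-lem} to $W \overset{\mathbf{p}_i}{\to} X \overset{\pi}{\to} Y$ for $i=1,2$ and intersect the two resulting dense $G_\del$ sets. Your additional remarks --- the explicit identification of $(\pi\circ\mathbf{p}_i)^{-1}(y)$ with $W_y$ via $W\subset R_\pi$, and the observation that the surjectivity of $\al$ in Lemma \ref{so-res-lem} is never actually used in its proof --- are accurate and make the argument more complete than the paper's own terse version.
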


\begin{proof}
Apply Lemma \ref{so-res-lem} to $W \overset{{\bf{p}}_i}{\to} X \overset{\pi}{\to} Y$
with $i=1, 2$ and then take let $Y_{so}$ be the intersection of the two corresponding dense $G_\del$
sets of $Y$. 
\end{proof}

%
%
%
\br

A result of Ditor and Eifler from 1972, \cite{DE} asserts that a
continuous surjection $\pi: X\to Y$ between compact Hausdorff
spaces $X$ and $Y$ is open iff the induced map $\pi_*:
\Mf(X) \to \Mf(Y)$ is an open surjection.
In the course of the proof of our main theorem (Theorem \ref{main})
we will need the following analogous result (in the metric case) for semiopen maps.
For the proof we refer to \cite[Theorem 2.3]{Gmt}.

\begin{thm}\label{so-thm}
Let $\pi: X \to Y$ be a continuous surjection between compact
metric spaces which is semiopen. Then the induced map $\pi_*:
\Mf(X) \to \Mf(Y)$ is a semiopen surjection.
\end{thm}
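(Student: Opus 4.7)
The plan is to invoke Lemma~\ref{so-lem}(3): $\pi_*$ will be semiopen provided the set $\Mf_0 \subset \Mf(X)$ of $\mu$ at which the multifunction $\pi_*^{-1}:\Mf(Y) \to 2^{\Mf(X)}$ is continuous at $\pi_*(\mu)$ is weak$^*$-dense in $\Mf(X)$. Upper semicontinuity of $\pi_*^{-1}$ follows automatically from the continuity of $\pi_*$ together with the compactness of $\Mf(X)$, so only its lower semicontinuity at the relevant points needs to be established.

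As the dense test set I would take the finite convex combinations $\mu = \sum_{i=1}^n \al_i \del_{x_i}$ with $x_i \in X_0$, where $X_0 = \{x \in X : \pi^{-1}:Y \to 2^X \text{ is continuous at } \pi(x)\}$ is the dense $G_\del$ supplied by Lemma~\ref{so-lem}(3) applied to the semiopen map $\pi$. Such measures are weak$^*$-dense in $\Mf(X)$. Writing $y_i := \pi(x_i) \in Y_0$ and $\nu_0 := \pi_*(\mu) = \sum_i \al_i \del_{y_i}$, take any $\mu_0 \in \pi_*^{-1}(\nu_0)$, decompose $\mu_0 = \sum_i \al_i \mu_{0,i}$ with $\mu_{0,i}$ supported on $\pi^{-1}(y_i)$, and weak$^*$-approximate each $\mu_{0,i}$ by a finite convex combination $\sum_j \beta_{ij} \del_{z_{ij}}$ with $z_{ij} \in \pi^{-1}(y_i)$ and $\sum_j \beta_{ij} = 1$.

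Next, pick pairwise disjoint small open neighborhoods $W_{ij} \ni z_{ij}$ in $X$ together with open neighborhoods $V_i \ni y_i$ in $Y$ whose boundaries are $\nu_0$-null. Lower semicontinuity of $\pi^{-1}$ at $y_i$ allows one to shrink $V_i$ so that $\pi^{-1}(y) \cap W_{ij} \neq \emptyset$ for every $y \in V_i$ and every $j$. The multifunctions $y \mapsto \pi^{-1}(y) \cap \ol{W_{ij}}$ on $V_i$ are then upper semicontinuous with nonempty compact values, so Kuratowski--Ryll-Nardzewski selection yields Borel sections $s_{ij}:V_i \to \ol{W_{ij}}$ with $\pi \circ s_{ij} = \id_{V_i}$. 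A global Borel section $s:Y \to X$ of $\pi$ (again from Kuratowski--Ryll-Nardzewski) will absorb the residual mass on $Y \setminus \bigcup_i V_i$.

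For $\nu'$ sufficiently weak$^*$-close to $\nu_0$ I would then define the lift
$$
\mu' \;:=\; \sum_{i,j} (s_{ij})_* \bigl(\beta_{ij}\, \nu'|_{V_i}\bigr) \;+\; s_* \bigl(\nu'|_{Y \setminus \bigcup_i V_i}\bigr).
$$
A direct calculation gives $\pi_*(\mu') = \nu'$, and by the Portmanteau theorem $\nu'(V_i) \to \al_i$ while $\nu'(Y \setminus \bigcup_i V_i) \to 0$; consequently each summand $(s_{ij})_*(\beta_{ij}\, \nu'|_{V_i})$ concentrates in a small neighborhood of $z_{ij}$ with total mass close to $\al_i\beta_{ij}$, so $\mu' \to \mu_0$ in weak$^*$, which verifies the desired lower semicontinuity. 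The main obstacle is the simultaneous coordination of the $W_{ij}$ and $V_i$: they must be small enough to force the lift close to $\mu_0$, yet compatible with lower semicontinuity of $\pi^{-1}$ at $y_i$ so that each intersection $\pi^{-1}(y) \cap W_{ij}$ remains nonempty for $y \in V_i$. This is precisely where the hypothesis $y_i \in Y_0$ enters, and without the semiopenness of $\pi$ one could not arrange the selections $s_{ij}$ on any neighborhood of $y_i$.
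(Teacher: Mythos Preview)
The paper does not give its own proof of this theorem; it simply refers to \cite[Theorem~2.3]{Gmt}. Your argument is correct and is in the same spirit as the proof given there: reduce via Lemma~\ref{so-lem}(3) to showing that $\pi_*^{-1}$ is continuous at every finitely supported $\nu_0=\sum_i\al_i\del_{y_i}$ with $y_i\in Y_0$, and verify lower semicontinuity at such $\nu_0$ by lifting nearby $\nu'$ with the aid of the continuity of $y\mapsto\pi^{-1}(y)$ at the $y_i$.

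Two small remarks. First, what you are really using from Lemma~\ref{so-lem} is not quite clause~(3) as stated for $\pi_*$, but rather the (equivalent) fact that the set of continuity points of $\pi_*^{-1}$, pulled back to $\Mf(X)$, is dense; your argument in fact shows the cleaner statement that $\pi_*^{-1}$ is continuous at \emph{every} finitely supported measure on $Y_0$, which immediately gives density of the preimage. Second, the appeal to Kuratowski--Ryll-Nardzewski, while perfectly valid, is a bit heavier than necessary: since you only need to lift the finitely many pieces $\nu'|_{V_i}$ into the fixed small sets $\ol{W_{ij}}$, any Borel section of $\pi$ restricted to $\pi^{-1}(\ol{W_{ij}})\to\pi(\ol{W_{ij}})\supset V_i$ (which exists for any continuous surjection of compact metric spaces) already does the job. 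The original proof in \cite{Gmt} proceeds without explicit selection theorems, but the underlying mechanism is the same.
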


Recall the following well known result;
for completeness we include a proof.

\begin{lem}\label{so-min}
Let $\pi: (X,\Ga) \to (Y,\Ga)$ be a homomorphism between minimal
systems. Then  $\pi$ is semiopen.
\end{lem}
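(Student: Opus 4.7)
The plan is to prove semiopenness by a Baire category argument using minimality plus compactness of $X$, with the standard trick of shrinking $U$ to ensure that the pieces of the resulting finite cover of $Y$ are closed.

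Let $U \subset X$ be a nonempty open set; I want to show $\pi(U)^\circ \neq \emptyset$. First, pick a nonempty open $V \subset X$ with $\ol V \subset U$ (possible since $X$ is compact Hausdorff, hence regular). By minimality of $(X,\Ga)$, the open cover $\{\ga V : \ga \in \Ga\}$ exhausts $X$, so by compactness there are $\ga_1, \dots, \ga_n \in \Ga$ with
\begin{equation*}
X = \ga_1 V \cup \cdots \cup \ga_n V \subset \ga_1 \ol V \cup \cdots \cup \ga_n \ol V.
\end{equation*}
Applying the equivariant surjection $\pi$, we get
\begin{equation*}
Y = \ga_1 \pi(\ol V) \cup \cdots \cup \ga_n \pi(\ol V),
\end{equation*}
and each $\ga_i \pi(\ol V)$ is closed in $Y$ because $\ol V$ is compact and $\ga_i$ is a homeomorphism of $Y$.

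Since $Y$ is compact Hausdorff, hence a Baire space, at least one of these finitely many closed sets, say $\ga_{i_0}\pi(\ol V)$, has nonempty interior. Translating by $\ga_{i_0}^{-1}$ (which is a homeomorphism of $Y$), we conclude that $\pi(\ol V)$ has nonempty interior. Finally, since $\ol V \subset U$ gives $\pi(\ol V) \subset \pi(U)$, the open set $\pi(\ol V)^\circ$ is contained in $\pi(U)$, so $\pi(U)^\circ \supset \pi(\ol V)^\circ \neq \emptyset$.

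There is really no obstacle here; the only subtle point is the reason for passing from $U$ to a smaller $V$ with $\ol V \subset U$: Baire category requires a cover by closed sets, and the naive cover $Y = \bigcup \ga_i \pi(U)$ consists of sets that need not be closed. Shrinking to $\ol V$ simultaneously produces closed pieces and keeps them inside $\pi(U)$, which is exactly what is needed to conclude.
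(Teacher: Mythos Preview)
Your proof is correct and follows essentially the same approach as the paper: the paper starts directly with a closed set $W$ with nonempty interior (your $\ol V$), covers $X$ by finitely many translates using minimality, and concludes via Baire that some $\ga_i\pi(W)$ has nonempty interior. Your version is slightly more explicit about the shrinking step from $U$ to $\ol V$, but the argument is the same.
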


\begin{proof}
Let $W\subset X$ be a closed set with nonempty interior.
By minimality of \XGa there is a finite set $\{\ga_1,\dots,\ga_n\}
\subset \Ga$ with $X = \bigcup_{i=1}^n \ga_i W$.
Therefore $Y = \bigcup_{i=1}^n \pi(\ga_i W)$
and it follows that for some $i$ the interior of the closed
set $\pi(\ga_i W) = \ga_i\pi(W)$ is nonempty. Thus, as required, also
$\pi(W)^\circ \not=\emptyset$.
\end{proof}

\br

\section{A key proposition on diffused measures}\label{diff-sec}
As explained above we fix a minimal ideal $M$ in $\beta \Ga$ and let
$u$ be an idempotent in $M$.
We denote the subgroup $uM$ of $M$ by $G$ and identify it
with the group of automorphisms of the universal $\Ga$-minimal
system $(M,\Ga)$, where for $\al\in G$ the corresponding
automorphism $R_\al: M \to M$ is given by right multiplication
$p \mapsto p \al$.
For an abelian $\Ga$ each subgroup $vM\subset M$, where $v$ is an idempotent
in $M$, is dense in $M$ and it follows that the $G$-dynamical
system $(M,G)$ (where $G$ acts by right multiplication) is minimal.
In the general case the compact dynamical system $(M, G)$ admits
a minimal subset and it follows that there is a minimal idempotent $v$
such that ${\cls}(vG)$ is minimal under the right $G$ action.
In the sequel we will need the following slightly stronger statement.

\begin{lem}\label{G-min-id}
If $(Y, \Ga)$ is a minimal proximal system then
\begin{enumerate}
\item
For every minimal idempotent $v \in M$
there is a unique point $y_0 \in Y$ with $vy_0 = y_0$ and 
moreover, $vy=y_0$ for every $y \in Y$.
\item
For every $y \in Y$ there is a minimal idempotent $v \in M$
such that $vy = y$ and the $G$-system $({\cls}(vG), G)$ is minimal.
\end{enumerate}
\end{lem}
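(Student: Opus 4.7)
For part (1), fix a minimal idempotent $v \in M$ and consider $vY$, the set of $v$-fixed points of $Y$. Given $y_1, y_2 \in vY$, proximality of $Y$ yields $p_0 \in \beta\Ga$ with $p_0 y_1 = p_0 y_2$. Set $p := p_0 v$; then $py_i = p_0 y_i$ (because $vy_i = y_i$), so still $py_1 = py_2$, and $p \in \beta\Ga \cdot M \subset M$ by the left-ideal property of $M$. Next put $\alpha := vp$; then $\alpha \in M$ and $v\alpha = \alpha$, placing $\alpha$ in $vM$. Since $vM$ is a group with identity $v$, the inverse $\alpha^{-1} \in vM$ exists and, applied to $\alpha y_1 = \alpha y_2$, yields $v y_1 = v y_2$, i.e.\ $y_1 = y_2$. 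So $vY = \{y_0\}$ is a singleton, and then $vy \in vY = \{y_0\}$ for every $y \in Y$.

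For part (2), fix $y \in Y$. Since every point in a minimal system is a minimal point, there exists a minimal idempotent $v_0 \in J$ with $v_0 y = y$. Consider the closed right $G$-invariant subset ${\cls}(v_0 G) \subset M$. Part (1) applied to $v_0$ identifies $y$ as the unique $v_0$-fixed point in $Y$ and shows that $v_0$ collapses all of $Y$ to $y$; hence for any $p = \lim v_0 g_i \in {\cls}(v_0 G)$ we get $py = \lim v_0(g_i y) = y$. By Zorn's lemma pick a minimal right $G$-subsystem $N \subset {\cls}(v_0 G)$; every element of $N$ still fixes $y$.

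The key technical step is to show that $N$ is in fact a closed subsemigroup of $M$. Given $p, q \in N$, minimality of $N$ provides a net $(g_i)$ in $G$ with $p = \lim q g_i$. Right-continuity of multiplication by $q$ then gives
\[
pq = \lim (q g_i) q = \lim q (g_i q).
\]
Crucially each $g_i q$ lies in $G$: from $g_i = u g_i$ and $g_i q \in MM \subset M$ we get $g_i q = u(g_i q) \in uM = G$. Therefore $q(g_i q) \in qG \subset N$, and passing to the limit gives $pq \in N$. Since $N$ is thus a compact right-topological subsemigroup of $M$, the Ellis--Numakura lemma provides an idempotent $v \in N$. This $v$ is automatically a minimal idempotent (it lies in the minimal ideal $M$), it fixes $y$ (as $v \in N$), and ${\cls}(vG) = N$ is minimal (since $N$ is minimal and $v \in N$).

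I expect the main obstacle to be the verification that $N$ is a subsemigroup; the crucial moves are to expand $pq$ via an orbit representation of $p$ under right multiplication by $q$, so that right-continuity applies, and to exploit the inclusion $GM \subset G$ to keep the computation inside $qG \subset N$.
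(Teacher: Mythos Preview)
Your proof is correct and follows essentially the same strategy as the paper. The paper is much terser: for part~(1) it simply says ``clear'', and for part~(2) it works directly with the full stabilizer $S=\{p\in M:py=y\}$ (closed and right $G$-invariant, hence containing a $G$-minimal set) rather than first passing to $\overline{v_0G}$, and then asserts without justification that the minimal set ``clearly'' contains an idempotent---your subsemigroup verification via $p=\lim qg_i$, right-continuity, and the inclusion $GM\subset G$ is exactly the argument needed to fill in that ``clearly''.
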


\begin{proof}
The first assertion is clear. For the second observe that
the closed set $\{p \in M : py =y\}$ is $G$-invariant (under the 
right action of $G$) and hence contains a $G$-minimal set.
Clearly this set contains an idempotent, and any such 
idempotent  $v$ satisfies the required property.
\end{proof}

As our choice of $u$ was arbitrary we can, and will from now on, assume that
${\cls}(uG) = {\cls}(Gu) = {\cls}(G)$ is $G$-minimal.

\br

\begin{lem}\label{sat}
Let $(Z^*, \Ga)$ be a minimal metric system and let
$\phi : (Z^*, \Ga) \to (Y, \Ga)$ be its maximal proximal factor.
Suppose further that we have the following diagram
$$
Z^* \overset{\tet}{\to} Z \overset{\sig}{\to} Y,
$$
where $\sig$ is a maximal isometric (i.e. equicontinuous) extension,
$\tet$ is an almost 1-1 extension, 
and $\phi = \tet \circ \sig$ (i.e. $\phi$ is an almost automorphic extension).
Let $O$ be a nonempty open subset of $Z^*$.
\begin{enumerate}
\item
There is a nonempty open subset $V\subset O$ such that
$\tet^{-1}(\tet(V))\subset O$.
\item
There is a nonempty open subset $W\subset O$ such that
${\cls}_\tau(W \cap uZ^*) \subset O$.
\end{enumerate}
\end{lem}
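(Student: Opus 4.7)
The plan is to establish (1) first and then deduce (2) by refining the open set produced in (1), using the isometric structure of $\sig$ together with the proximality of $Y$.

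\textbf{Part (1).} I will exploit the almost 1-1 property of $\tet$. The set $Z^*_0 := \{z^* \in Z^* : \tet^{-1}(\tet(z^*)) = \{z^*\}\}$ is a dense $G_\del$ subset of $Z^*$, so I may pick $z^*_0 \in O \cap Z^*_0$ and set
\[
V := Z^* \setminus \tet^{-1}\bigl(\tet(Z^* \setminus O)\bigr).
\]
The set $\tet(Z^* \setminus O)$ is closed by compactness, so $V$ is open in $Z^*$; the 1-1 property at $z^*_0$ forces $\tet(z^*_0) \notin \tet(Z^* \setminus O)$, whence $z^*_0 \in V$; and $V$ is $\tet$-saturated by construction, so $\tet^{-1}(\tet(V)) = V \subset O$.

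\textbf{Part (2).} Starting from $V$ as in (1), I first check that $\tet(V)^\circ \not= \emptyset$. For a 1-1 point $z^*_0 \in V$, if $\tet(z^*_0)$ were not interior to $\tet(V)$ there would exist $z_n \to \tet(z^*_0)$ in $Z$ with $z_n \notin \tet(V)$; lifting $z_n = \tet(z^*_n)$ and passing to a convergent subsequence $z^*_{n_k} \to z^*$, one would force $\tet(z^*) = \tet(z^*_0)$ and hence $z^* = z^*_0 \in V$, contradicting $z^*_{n_k} \notin V$ (which holds since $V = \tet^{-1}(\tet(V))$). Setting $V_0 := \tet(V)^\circ$, by regularity of $Z$ I pick an open $V_1 \subset Z$ with $\ol{V_1} \subset V_0$, and let $W := \tet^{-1}(V_1)$. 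Then $W$ is a nonempty open subset of $V$, with $\tet(W) = V_1$.

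It remains to verify ${\cls}_\tau(W \cap uZ^*) \subset V$. I will use two structural facts. First, since $\tet$ is almost 1-1 (hence proximal), the Ellis groups coincide, $\mathfrak{G}(Z^*, z^*_0) = \mathfrak{G}(Z, \tet(z^*_0)) =: A$, and $\tet$ restricts to a $\tau$-homeomorphism $uZ^* \to uZ$ (both sides identify with $G/A$ equipped with its quotient $\tau$-topology, which is Hausdorff because $G' \subset A$, using that $\sig$ is isometric and $F := \mathfrak{G}(Y, y_0) = G$ by proximality of $Y$). Second, proximality of $Y$ together with distality of $\sig$ give $uZ = G z_0 = \sig^{-1}(y_0)$, a closed fiber of $\sig$, and the equicontinuity of $\sig$ then yields $u \circ B = \ol{B}$ (ambient closure) for every $B \subset uZ$, so that the $\tau$-topology on $uZ$ coincides with the topology induced from $Z$. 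Combining these, $\tet$ identifies ${\cls}_\tau(W \cap uZ^*)$ with ${\cls}_\tau(V_1 \cap uZ) = \ol{V_1 \cap uZ} \subset \ol{V_1} \cap uZ \subset V_0 \cap uZ \subset \tet(V)$, and pulling back through $V = \tet^{-1}(\tet(V))$ gives ${\cls}_\tau(W \cap uZ^*) \subset V \subset O$, as required.

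The subtlest point I expect is the equality $u \circ B = \ol{B}$ on $uZ$ --- equivalently, that the $\tau$-topology on $uZ$ is induced from $Z$. This is where both the isometric structure of $\sig$ and the reduction, via proximality of $Y$, of $uZ$ to a full single $\sig$-fiber are genuinely used.
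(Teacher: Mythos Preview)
Your proof is correct and follows essentially the same approach as the paper. The differences are cosmetic: in Part (1) you give a direct construction of the saturated set $V = Z^* \setminus \tet^{-1}(\tet(Z^* \setminus O))$ where the paper argues by contradiction with shrinking balls; in Part (2) you take $W$ to be $\tet$-saturated (forcing the small detour through $\tet(V)^\circ \ne \emptyset$), whereas the paper simply chooses any open $W$ with $\ol{W} \subset V$ and uses $\tet^{-1}(\tet(\ol{W})) \subset \tet^{-1}(\tet(V)) \subset O$ directly. The structural ingredients---that $\tet\rest uZ^*$ is a $\tau$-homeomorphism onto $uZ$ and that the $\tau$-topology on $uZ$ coincides with the subspace topology (since $G' \subset A$, making $G/A$ compact Hausdorff)---are invoked identically in both arguments.
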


\begin{proof}

1.\
Suppose $O \subset Z^*$ is a nonempty open set for which the statement of
the lemma fails. Choose a point $z^* \in O$ such that
$\tet^{-1}(\tet(z^*))=\{z^*\}$ and let $V_n\subset O$ be a sequence of
open balls centered at $z^*$ with ${\diam}(V_n) \searrow 0$.
By assumption there are pairs of points $z_n \in V_n$
and $z'_n\not\in O$ with $\tet(z_n)=\tet(z'_n)$. However,
as $\lim_{n\to\infty} \tet^{-1}(\tet(z_n)) = \{z^*\}$,
we have $z'_n \to z^*$ in contradiction of the fact that $O$
is a neighborhood of $z^*$.

2.\
Since $Z$ is an isometric extension of the proximal system $Y$, $uZ$ is a closed subset of $Z$
and the $\tau$-topology on $uZ$ coincides with its
compact Hausdorff group topology.
Since $\tet$ is an almost one-to-one map,
the restriction $\tet\rest uZ^* : uZ^* \to uZ$ is a
homeomorphism of $uZ^*$, equipped with the $\tau$ topology, onto $uZ$.
Let $O \subset Z^*$ be a nonempty open set. Let $V \subset O$
be as in part 1, and let $W$ be a
nonempty open subset such that $\ol{W} \subset V$.
Now
\begin{align*}
{\cls}_\tau (W \cap uZ^*) 
& = \tet^{-1}({\cls}_\tau(\tet(W \cap uZ)) \cap uZ^*\\
& = \tet^{-1}(\ol{\tet(W \cap uZ)}) \cap uZ^*\\
& \subset  \tet^{-1}(\ol{\tet(W)})
 = \tet^{-1}(\tet(\ol{W}))\\
& \subset  \tet^{-1}(\tet(V)) \subset O.
\end{align*}
\end{proof}

\begin{lem}\label{tau}
Let $(X, \Ga)$ be a minimal metric system and let
$\phi : (X,\Ga) \to (Y,\Ga)$ be its maximal proximal factor.
Suppose further that we have the following diagram
$$
X \overset{\rho}{\to} Z^* \overset{\tet}{\to} Z \overset{\sig}{\to} Y,
$$
where $\rho$ and $\sig$ are maximal isometric extensions,
$\tet$ is an almost 1-1 extension, 
and $\phi = \sig \circ \tet \circ \rho$.
%
%
%
%
Fix a minimal ideal $M \subset \beta\Ga$ and an idempotent $u\in M$ as above.
Let $U$ be an open subset of $X$ such that $U \cap uX \not= \emptyset$.
Then
$$
{\cls}_\tau (U\cap uX) \supset \rho^{-1}(\rho(U)) \cap uX.
$$
\end{lem}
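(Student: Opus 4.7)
My approach is to translate the statement into a question about the Ellis group $G = uM$. Fix a basepoint $x_0 \in U \cap uX$ (which exists by assumption) and set $A = \Gf(X, x_0)$, $B^* = \Gf(Z^*, \rho(x_0))$, $B = \Gf(Z, \sig\rho(x_0))$. Since $Y$ is the maximal proximal factor, $\Gf(Y, \phi(x_0)) = G$. Standard Ellis-group relations give $B^* = B$ (from $\tet$ being proximal), $G' \subset B$ (from the equicontinuous extension $\sig$ over a proximal base), and $(B^*)' \subset A$ (from the equicontinuous extension $\rho$). Identify $uX$ with $G/A$ via $\al \mapsto \al x_0$. Since $\rho$ is distal, the formula $\rho^{-1}(z^*) = \Gf(Z^*, z^*)\cdot x$ (for any $x \in \rho^{-1}(z^*)$) shows that $\rho^{-1}(z^*) \subset uX$ for every $z^* \in uZ^*$. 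Hence, if $x \in uX$ satisfies $\rho(x) \in \rho(U)$, then $\rho(x) = \rho(x')$ for some $x' \in U \cap uX$. Setting $H := \{\al \in G : \al x_0 \in U\}$, this gives $U \cap uX = Hx_0$ and $\rho^{-1}(\rho(U)) \cap uX = HB^* x_0$, reducing the lemma to the algebraic statement
\[
HB^* \subset {\cls}_\tau^G(H).
\]

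Next I would verify that $H$ is $\tau$-open in $G$. Its complement $G \setminus H = \{\al \in G : \al x_0 \in X \setminus U\}$ is $\tau$-closed: if $\al \in G$ equals $\lim \ga_i \beta_i$ in $\beta\Ga$ with $\ga_i \to u$ and $\beta_i x_0 \in X \setminus U$, then by continuity of $p \mapsto p x_0$ and closedness of $X \setminus U$, we get $\al x_0 \in X \setminus U$. Consequently, for each $h \in H$ the translate $h^{-1}H$ is a $\tau$-open neighborhood of $u$ in $G$. Using the right-$\tau$-invariance of $G$, the claim $HB^* \subset {\cls}_\tau^G(H)$ is equivalent to $B^* \subset {\cls}_\tau^G(h^{-1}H)$ for every $h \in H$.

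The definition of the derived group immediately yields $(B^*)' \subset {\cls}_\tau(h^{-1}H \cap B^*) \subset {\cls}_\tau(h^{-1}H)$, and more generally $G' \subset {\cls}_\tau(h^{-1}H)$ since $h^{-1}H$ is a $\tau$-open neighborhood of $u$ in $G$. The main obstacle is upgrading these inclusions to $B^* \subset {\cls}_\tau(h^{-1}H)$. I would do so by appealing to Lemma \ref{sat} applied to the subdiagram $Z^* \overset{\tet}{\to} Z \overset{\sig}{\to} Y$: the almost one-to-one extension $\tet$ induces a $\tau$-homeomorphism $uZ^* \cong uZ$, and $uZ$ carries the compact Hausdorff group topology coming from $G/G'$. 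This identifies the $\tau$-closure at the $Z^*$ level with the ordinary closure, allowing me to transfer density statements from the base $uZ^*$ to $uX$ through the equicontinuous fibration $\rho$, exploiting that $B^*/(B^*)'$ is a compact topological group with closed subgroup $A/(B^*)'$. Carrying out this transfer coherently, using both the maximality of $\rho$ and of $\sig$, is the principal technical step of the proof.
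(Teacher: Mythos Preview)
Your reduction to the algebraic statement $HB^* \subset {\cls}_\tau(H)$ is correct and matches the paper's translation, but the argument breaks at the claim that $H=\{\al\in G:\al x_0\in U\}$ is $\tau$-open. Your justification says: if $\al=\lim \ga_i\beta_i$ with $\ga_i\to u$ and $\beta_i x_0\in X\setminus U$, then $\al x_0\in X\setminus U$. This does not follow: continuity of $p\mapsto px_0$ gives $\al x_0=\lim \ga_i(\beta_i x_0)$, but the points $\ga_i(\beta_i x_0)$ need not lie in $X\setminus U$ --- the $\ga_i$ move $X\setminus U$ around arbitrarily. In fact $H$ is \emph{not} $\tau$-open in general (if every $X$-open set restricted to $uX$ were $\tau$-open, the $\tau$ and relative topologies on $uX$ would coincide, forcing $uX$ to be closed and $X$ distal). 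Consequently your deduction $G'\subset{\cls}_\tau(h^{-1}H)$ ``by definition of the derived group'' collapses, since that definition requires a genuine $\tau$-neighborhood of $u$.

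The paper circumvents this by a different mechanism. It uses the special choice of $u$ (made just before Lemma~\ref{G-min-id}) so that $\ol{G}$ is minimal under the \emph{right} $G$-action; then the $M$-open set $\tilde U=h^{-1}(U)$ gives a finite cover $\{\tilde U\al_i\}$ of $\ol{G}$, whence $\bigcup_i{\cls}_\tau(\tilde U\cap G)\al_i=G$, and one of the closed pieces ${\cls}_\tau(\tilde U\cap G)$ has nonempty $\tau$-interior. This yields only a \emph{single} coset $\beta G'\subset{\cls}_\tau(H)$, not the full containment you aim for. Two further ingredients then finish the job: first, the identity $G'A=F=B^*$, which comes from the \emph{maximality} of $\sig$ (via Theorem~\ref{eq-ext}: the Ellis group of the largest equicontinuous extension is $AG'$) and which turns $\beta G'x_0$ into the full $\rho$-fiber $\rho^{-1}(\rho(\beta x_0))$ --- you never state or use this relation; second, Lemma~\ref{sat} is invoked not to ``transfer density'' as you suggest but to arrange, by shrinking $U$, that the resulting $\beta$ satisfies $\rho(\beta x_0)\in\rho(U)$. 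Iterating over all small $U$ and using openness of $\rho$ then gives the full inclusion.
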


%
%
\begin{proof}
Fix a point $x_0 \in X$ with $ux_0=x_0$ and let $z^*_0$,  $z_0$ and $y_0$
be its images in $Z^*$, $Z$ and $Y$ respectively.
Let
\begin{gather*}
A=\mathfrak{G}(X,x_0)=\{\alpha\in G:\alpha x_0=x_0\}, \ \text{and}\ \\
F = \mathfrak{G}(Z,z_0)=
\mathfrak{G}(Z^*,z^*_0) = \{\alpha\in G:\alpha z_0=z_0\} =
\{\alpha\in G:\alpha z^*_0=z^*_0\}.
\end{gather*}
Note that, as $Y$ is a proximal system, we have $\mathfrak{G}(Y, y_0) = G$.
The assumption that $\rho$ is an isometric extension implies that
$B = F/A$ is a homogeneous space of the Hausdorff compact topological group
$F/\cap\{f A f^{-1} : f \in F\}$ (with respect to its $\tau$-topology).
The fact that $(Z,\Ga)$ is the maximal equicontinuous extension of $(Y, \Ga)$
(within $X$) implies that $F \supset G'$ and that $G'A=AG'=F$.
Let $h: M \to X$ denote the evaluation map $p \mapsto px_0$.

Let $U$ be a nonempty open subset of $X$
such that $U \cap uX \not= \emptyset$.
Set $\tilde{U}=h^{-1}(U)=
\{p\in M: px_0 \in U\}$.
Then $\tilde{U}$ is an open subset of $M$ with
 $\tilde{U} \cap G \not=\emptyset$
and,
by minimality of the $G$-system $({\cls} {(G)},G)$,
the collection $\{\tilde{U}\al: \al \in G\}$ is an open cover of ${\cls}{G}$.
Choose a finite subcover, say $\{\tilde{U}\al_i: i=1,2,\dots,n\}$.
Now
$$
\bigcup_{i=1}^n  {\cls}_\tau(\tilde{U}\al_i \cap G)
= \bigcup_{i=1}^n {\cls}_\tau(\tilde{U} \cap G)\al_i = G;
$$
hence ${\cls}_\tau(\tilde{U}\cap G)$ has a nonempty $\tau$-interior.
Since ${\cls}_\tau(\tilde{U} \cap G)$ is also $\tau$-closed, it must
contain a left translate of $G'$, say $\beta G'$ for some $\beta \in G$
(this follows from the definition of $G'$, see equation (\ref{prime})).
Projecting back to $X$ via $h$ we get
\begin{align*}
{\cls}_\tau (U \cap uX ) & =
h({\cls}_\tau (\tilde{U}\cap G)) \\
&\supset
\beta G' x_0 = \beta G' A x_0 =
\beta Fx_0 \\
& = (\beta F \beta^{-1})\beta x_0 = \rho^{-1}(\rho(\beta x_0)).
\end{align*}

Let $O=\rho(U)$, then $O$ is a nonempty open subset of $Z^*$
and by Lemma \ref{sat} there is a nonempty open subset $W\subset O$
such that ${\cls}_\tau(W \cap uZ^*) \subset O$. Set $U_1=\rho^{-1}(W)
\cap U$. Then $U_1$ is a nonempty open subset of $X$ and by the above argument
there exists $\beta_1\in G$ with $\rho^{-1}(\rho(\beta_1 x_0))
\subset {\cls}_\tau (U_1 \cap uX )\subset {\cls}_\tau (U \cap uX )$. Now
\begin{align*}
\rho(\beta_1 x_0) &
\in \rho({\cls}_\tau (U_1 \cap uX ))\\
& = {\cls}_\tau (\rho(U_1) \cap uZ^*)\\
& \subset {\cls}_\tau (W \cap uZ^* ) \subset O = \rho(U).
\end{align*}

Thus we have shown that for every nonempty open subset $U\subset X$
which meets $uX$,
the set ${\cls}_\tau (U \cap uX )$ contains a full fiber
$\rho^{-1}(z^*)$ for some $z^*\in \rho(U)$. Since $\rho$ is an open map
we conclude that
$$
{\cls}_\tau (U\cap uX) \supset \rho^{-1}(\rho(U)) \cap uX
$$
as required.
\end{proof}

\br

This lemma has the following surprizing corollary.

\begin{cor}\label{finite}
In the situation described in Lemma \ref{tau}, if $\sig : Z \to Y$ is a finite to one isometric extension
then $\rho : X \to Z^*$ is an isomorphism, i.e. 
$\phi : X \to Y$ (with $\phi = \sig \circ \tet \circ \rho$)
 is an almost automorphic extension. 
In particular if, in addition,
$Y$ is the trivial one point system, then $X = Z$ is a finite system.
\end{cor}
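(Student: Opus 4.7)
The plan is to argue by contradiction: if $\rho$ is not an isomorphism, then the fiber $\rho^{-1}(z^*_0)$ contains two distinct points, and we derive a Hausdorff contradiction by combining Lemma~\ref{tau} with two supporting facts: (a) the finite-to-one hypothesis on $\sig$ forces $uZ$ and $uZ^*$ to be finite, so we can control the open set to which we apply the lemma; and (b) on a fiber of an isometric extension the $\tau$-topology coincides with the topology induced from $X$.

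First I would set up the Ellis groups in the manner used in the proof of Lemma~\ref{tau}: $A = \mathfrak{G}(X, x_0)$, $F = \mathfrak{G}(Z^*, z^*_0) = \mathfrak{G}(Z, z_0)$ (equal because the almost one-to-one extension $\tet$ is proximal, hence preserves Ellis groups), and $\mathfrak{G}(Y, y_0) = G$ since $Y$ is proximal. Because $\sig$ is distal and $Y$ proximal, $\sig^{-1}(y_0) = uZ = G/F$, which is finite by hypothesis; since $\tet$ restricts to a bijection $uZ^* \to uZ$, the set $uZ^*$ is finite as well. Now suppose for contradiction that $A \subsetneq F$. Since $\rho$ is distal, $\rho^{-1}(z^*_0) = Fx_0 \subset uX$, and we may choose $x_1 \in Fx_0 \setminus \{x_0\}$. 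Using Hausdorffness of $X$, pick disjoint open $U_0 \ni x_0$ and $U_1 \ni x_1$ with $\ol{U_0} \cap \ol{U_1} = \emptyset$; using finiteness of $uZ^*$ in the Hausdorff space $Z^*$, pick an open $V \subset Z^*$ with $V \cap uZ^* = \{z^*_0\}$; and replace $U_0$ by $U_0 \cap \rho^{-1}(V)$. With this choice, $U_0 \cap uX = U_0 \cap Fx_0$ and $\rho^{-1}(\rho(U_0)) \cap uX = Fx_0$ (the inclusion $\subset$ because $\rho(U_0) \subset V$ meets $uZ^*$ only at $z^*_0$, and the inclusion $\supset$ because $z^*_0 \in \rho(U_0)$ drags the whole fiber $\rho^{-1}(z^*_0) = Fx_0$ into the preimage). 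Lemma~\ref{tau} now yields that $U_0 \cap Fx_0$ is $\tau$-dense in $Fx_0$.

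To convert this into a contradiction, I would invoke the group-extension structure~(\ref{g-ext}) associated with the isometric extension $\rho$. The fiber $Fx_0 = F/A$ is naturally identified with $K/L$, where $K = F/A_0$ (with $A_0 = \bigcap_{f \in F} fAf^{-1}$) is a compact Hausdorff topological group under $\tau$ and $L = A/A_0$ is a closed subgroup; both the $\tau$-topology on $F/A$ and the subspace topology on $Fx_0 \subset X$ are the quotient topology coming from $K$. Consequently $U_0 \cap Fx_0$ is dense in $Fx_0$ in the $X$-topology, so $x_1 \in \ol{U_0 \cap Fx_0}^{X} \subset \ol{U_0}$, contradicting $\ol{U_0} \cap \ol{U_1} = \emptyset$. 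Hence $F = A$, $\rho$ is an isomorphism, and $\phi = \sig \circ \tet$ is almost automorphic. For the second assertion, if $Y$ is the trivial system, then $\sig$ being finite-to-one means $Z$ is finite; an almost one-to-one extension of a finite (hence discrete) space is an isomorphism, so $Z^* = Z$; combined with the isomorphism $\rho$ just proved, this forces $X = Z$, a finite system.

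The main obstacle is the conceptual step of upgrading the $\tau$-density conclusion of Lemma~\ref{tau} to genuine topological density inside $X$, so that Hausdorff separation produces a contradiction. This is exactly where the hypothesis that $\rho$ is isometric is decisive: only the group-extension description of an isometric extension makes the $\tau$-topology and the ambient topology agree on each fiber.
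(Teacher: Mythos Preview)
Your proof is correct and follows essentially the same approach as the paper's: both argue by contradiction, exploit the finiteness of $uZ^*$ (coming from $\sig$ being finite-to-one) to isolate the fiber $\rho^{-1}(z^*_0)$, apply Lemma~\ref{tau} to a small open set meeting that fiber properly, and then use the isometric nature of $\rho$ to pass from the $\tau$-closure conclusion to an ordinary closure, yielding a Hausdorff contradiction. The only difference is cosmetic: where the paper asserts in one line that ${\cls}_\tau(U\cap uX)\subset \ol{U}$ ``because $\rho$ is an isometric extension,'' you unpack this by invoking the group-extension diagram~(\ref{g-ext}) to identify the $\tau$-topology and the $X$-subspace topology on the fiber $Fx_0\cong K/L$; your version makes the mechanism more transparent but is the same idea.
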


\begin{proof}
Let $z_0 \in Z$ be a point with $\tet^{-1}(z_0) = \{z_0^*\}$ a singleton.
With no loss of generality we can assume that $uz_0 =z_0$
(see Lemma \ref{G-min-id}).
Note that then also $uz_0^* = z_0^*$ and $\rho^{-1}(z_0^*)
\subset uX$.
Let $y_0 =uy_0 = \sig(z_0)  \in Y$ and let $X_0 = \phi^{-1}(y_0)$, $Z^*_0= \rho(X_0)$ and 
$Z_0 = \sig^{-1}(y_0) = uZ$.
By our assumptions $Z_0$ is a finite set and therefore
$\{z^*_0\}$  is a relatively open subset of u$Z^*_0$. 
Then $\rho^{-1}(z^*_0)$ is a relatively open subset of $X_0$. 
We claim that $\rho^{-1}(z^*_0)$ is a singleton.
If this is not the case,
then there is an open subset $U \subset X$ with $\emptyset
\not = \ol{U} \cap \rho^{-1}(z^*_0)  \subsetneq \rho^{-1}(z^*_0)$.
By Lemma \ref{tau} we have
$$
{\cls}_\tau (U \cap uX) \supset \rho^{-1}(\rho(U)) \cap uX.
$$
But ${\cls}_\tau (U \cap uX)   \subset \ol{U}$ (because $\rho$ 
is an isometric extension) and
$z^*_0 \in \rho(U) $, whence 
$\rho^{-1}(\rho(U)) \cap uX \supset  \rho^{-1}(z^*_0)$. It follows
that the set $\ol{U}$ contains the whole fiber
$\rho^{-1}(z^*_0)$, a contradiction.
Thus $\rho^{-1}(z^*_0)$ is indeed a singleton and it follows that $\rho$
is one-to-one as claimed.
\end{proof}

\br

\begin{prop}\label{diff}
Let \XGa be a minimal metric system and let
$\phi : (X,\Ga) \to (Y,\Ga)$ be its maximal proximal factor.
Suppose further that we have the following diagram
$$
X \overset{\rho}{\to} Z^* \overset{\tet}{\to} Z \overset{\sig}{\to} Y,
$$
where $\rho$ and $\sig$ are maximal isometric extensions,
$\tet$ is an almost 1-1 extension, 
and $\phi = \sig \circ \tet \circ \rho$.
\begin{enumerate}
\item
For every $z^*\in Z^*$ the fiber $\rho^{-1}(z^*)$ has the structure of a
homogeneous space of a compact Hausdorff topological group
and we let $\la_{z^*}$ be the corresponding Haar measure on
this fiber.
In particular $\rho$ is a RIM and open extension and
$z^*\mapsto \la_{z^*}$, \ $Z^* \to \Mf(X)$, is the corresponding section.
Let $\La: \Mf(Z^*) \to \Mf(X)$, defined by
\begin{equation*}
\La(\nu) =\int_{Z^*} \la_{z^*}\, d\nu(z^*),
\end{equation*}
be the associated affine injection.
\item
Let $y_0 \in Y$ be the unique point with $uy_0=y_0$. Denote
$Z_0 = \sig^{-1}(y_0)$, 
$X_0 = \phi^{-1}(y_0)$ and $Z_0^* = (\sig \circ \tet)^{-1}(y_0) = \rho(X_0)$.
Set $\Mf_m(X_0)=\{\La(\nu): \nu \in \Mf(Z_0^*)\}$.
Then the set
$$
R=\{\nu \in \Mf(X_0): \text{the orbit closure of $\nu$ meets $\Mf_m(X_0)$}\}
$$
is a dense $G_\del$ subset of $\Mf(X_0)$.
\end{enumerate}
\end{prop}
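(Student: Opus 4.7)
\textbf{Part 1.} I begin with standard equicontinuous-extension theory. Since $\rho:X\to Z^*$ is an isometric extension of minimal systems, diagram~(\ref{g-ext}) realizes $\rho$ as the quotient of a group extension $\tilde\rho:\tilde X\to Z^*$ whose structure group $K=F/A_0$ is compact Hausdorff, by the closed subgroup $L=A/A_0$. Its fibers $\rho^{-1}(z^*)$ are then homogeneous $K/L$-spaces, and each carries the push-forward $\la_{z^*}$ of normalized Haar measure on $K$; continuity and $\Ga$-equivariance of $z^*\mapsto\la_{z^*}$ are inherited from $K$. Hence $\rho$ is RIM, and is open because every isometric extension is RIC. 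The map $\La(\nu)=\int\la_{z^*}\,d\nu(z^*)$ is weak-$*$ continuous by dominated convergence, affine by construction, and injective since $\rho_*\circ\La=\id_{\Mf(Z^*)}$.

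\textbf{Part 2, set-up.} By Lemma~\ref{G-min-id}, $y_0$ is the unique $u$-fixed point of the proximal system $(Y,\Ga)$, so $u\delta_{y_0}=\delta_{y_0}$ in $\Mf(Y)$ and $\Mf(X_0)=\phi_*^{-1}(\delta_{y_0})$ is preserved by every $p\in\beta\Ga$ with $py_0=y_0$. Fixing a metric $d$ compatible with the weak-$*$ topology on $\Mf(X)$, write
\begin{equation*}
R=\bigcap_{n\ge 1}R_n,\qquad R_n=\{\nu\in\Mf(X_0):\exists\,\ga\in\Ga,\ d(\ga\nu,\Mf_m(X_0))<1/n\}.
\end{equation*}
Each $R_n$ is open since $\Mf_m(X_0)=\La(\Mf(Z_0^*))$ is compact and $\nu\mapsto d(\ga\nu,\Mf_m(X_0))$ is continuous. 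Thus $R$ is $G_\delta$, and by the Baire category theorem it suffices to show each $R_n$ is dense in $\Mf(X_0)$.

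\textbf{Density argument and main obstacle.} Given $\nu_0\in\Mf(X_0)$ and a weak-$*$ neighborhood $V$, approximate $\nu_0$ weak-$*$ by $\sum_{j=1}^{N_0}c_j\delta_{x_j}$ with $x_j\in uX\subset X_0$. For each $j$ the fiber $\rho^{-1}(\rho(x_j))\cap uX$ is a compact homogeneous $K/L$-space (Part~1), so pick $N$ sample points whose empirical distribution approximates $\la_{\rho(x_j)}$. By Lemma~\ref{tau}, applied to a small open neighborhood $U_j$ of $x_j$, each such sample point is a $\tau$-limit of translates of points in $U_j\cap uX$ under nets in $\Ga$ converging to $u$; equivalently, for every $\varepsilon>0$ there exist $x_{j,k}\in U_j\cap uX$ and $\ga_{j,k}\in\Ga$ close to $u$ with $\ga_{j,k}x_{j,k}$ within $\varepsilon$ of the prescribed sample point. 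Shrinking the $U_j$, the measure $\nu=\sum_j c_j\cdot\tfrac{1}{N}\sum_{k}\delta_{x_{j,k}}$ lies in $V$, and the plan is then to find a single $\ga\in\Ga$ approximating $u$ simultaneously on the finite set $\{x_{j,k}\}_{j,k}$ so that $\ga\nu$ is within $1/n$ of $\sum_{j}c_j\la_{\rho(x_j)}\in\Mf_m(X_0)$. The main obstacle is precisely this \emph{simultaneous} approximation: since $p\delta_x=\delta_{px}$ always, the Ellis action on atoms never diffuses, so diffusion must come from choosing the cloud $\{x_{j,k}\}$ together with one $\ga$ that equidistributes every relevant fiber at once. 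This coordination is made possible by the standing convention (fixed before Lemma~\ref{sat}) that $\cls(uG)=\cls(Gu)$ is $G$-minimal, which allows Lemma~\ref{tau} to be upgraded to uniform $\tau$-approximability across the finitely many points $\{x_{j,k}\}$; combined with continuity of the section $z^*\mapsto\la_{z^*}$ from Part~1, this completes the density step and hence the proof.
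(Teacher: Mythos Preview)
Your overall strategy matches the paper's: approximate $\kappa$ by a finitely supported measure, spread each atom $x_i$ over its $\rho$-fiber via Haar sample points $x'_{i,j}$, then use Lemma~\ref{tau} to find a $\gamma\in\Gamma$ that pushes a nearby atomic measure close to the target $\nu\in\Mf_m(X_0)$. You also correctly isolate the crux of Part~2 --- that a \emph{single} $\gamma$ must handle all the sample points simultaneously.

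Your proposed resolution of this obstacle, however, is not quite right. You attribute the simultaneity to the standing convention that $\cls(uG)$ is $G$-minimal, saying this ``allows Lemma~\ref{tau} to be upgraded to uniform $\tau$-approximability across the finitely many points.'' But $G$-minimality of $\cls(G)$ is consumed \emph{inside} the proof of Lemma~\ref{tau} (to show that $\cls_\tau(\tilde U\cap G)$ has nonempty $\tau$-interior); it does not by itself furnish a single $\gamma$ working across several neighborhoods $U_j$ at once, and you do not explain how such an upgrade would go.

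The paper's mechanism is different and more direct. Rather than applying Lemma~\ref{tau} to each $U_i$ and then trying to coordinate the resulting $\gamma_{j,k}$, one passes immediately to the \emph{union}: since $\cls_\tau$ respects finite unions,
\[
\cls_\tau\Bigl(uX\cap\bigcup_i U_i\Bigr)\ \supset\ uX\cap\bigcup_i\rho^{-1}(\rho(U_i)),
\]
and the left side equals $uX\cap\bigl(u\circ(uX\cap\bigcup_i U_i)\bigr)$ by the circle-operation description of $\tau$-closure recalled in Section~\ref{sur-sec}. The point is that $u\circ A$ is a limit of $\gamma_\alpha A$ in the Vietoris topology on $2^X$ along a net $\gamma_\alpha\to u$, and Vietoris convergence \emph{automatically} yields simultaneous approximation of any finite subset of the limit set: for the full collection $\{x'_{i,j}\}$ of sample points one picks a single $\gamma$ far along the net and points $x_{i,j}\in uX\cap\bigcup_i U_i$ with $\gamma x_{i,j}$ close to $x'_{i,j}$. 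This hyperspace interpretation of the $\tau$-closure is the missing ingredient in your argument; once it is invoked, your proof and the paper's coincide.
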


\begin{proof}
Part 1 is well known; see e.g. Corollary 3.7 in \cite{G0}.

\br

2.\ 
Fix a compatible metric $d$ on $\Mf(X)$.
Let $\kappa \in \Mf(X_0)$ and $\ep, \eta >0$ be given.
Find an atomic measure $\la=\frac{1}{n}\sum_{i=1}^n \del_{x_i},\ x_i \in X_0$
such that $d(\ka,\la) < \ep/2$.
Choose open disjoint neighborhoods $U_i$ of $x_i$,
so small that every measure of the form
$\mu=\frac{1}{n}\sum_{i=1}^n \mu_i,\ \mu_i \in \Mf(X_0)$
 with ${\supp}\mu_i \subset U_i$,
will satisfy $d(\mu,\la) < \ep/2$, and hence also
$d(\mu,\ka) < \ep$.

Set $\nu=\frac{1}{n}\sum_{i=1}^n \la_{z^*_i}$ with $z^*_i=\rho(x_i)$
(an element of $\Mf_m(X_0)$).
For each $z^*_i$ choose points $\{x'_{i,j}\}_{j=1}^k
\in \rho^{-1}(z^*_i)$ so that $d(\mu',\nu) < \eta/2$,
where
$$
\mu'=\frac{1}{nk}\sum_{i=1}^n \sum_{j=1}^k \del_{x'_{i,j}}.
$$

By Lemma \ref{tau}
\begin{gather*}
uX \cap u \circ \left(uX \cap  \bigcup_{i=1}^n U_i\right)
= {\cls}_\tau \left(uX \cap  \bigcup_{i=1}^n U_i\right)\\
 \supset uX \cap \left(\bigcup_{i=1}^n \rho^{-1}(\rho(U_i))\right).
\end{gather*}

Therefore there exist an element $\ga \in \Ga$ and for each $i$ a
set $\{x_{i,j}\}_{j=1}^k \subset uX \cap U_i$, such that
$d(\ga x_{i,j},x'_{i,j})$ is so small that the inequality
$d(\ga \mu, \mu')< \eta/2$ is satisfied, with
$$
\mu=\frac{1}{nk}\sum_{i=1}^n \sum_{j=1}^k \del_{x_{i,j}}.
$$
Thus $d(\ga\mu,\nu)< \eta$. 
By the choice of the small sets $U_i$ we also have $d(\mu,\ka)< \ep$
and, as $\ep>0$ is arbitrary,
we have shown that the open set
$$
R_\eta=
\{\mu\in \Mf(X_0): \ {\text{there exists}}\ \ga\in \Ga\ {\text{ with}}\
d(\ga\mu, \Mf_m(X_0)) < \eta\}
$$
is dense in $\Mf(X_0)$.
Clearly $R=\bigcap\{R_\eta: \eta > 0\}$ is the required
dense $G_\del$ subset of $\Mf(X_0)$.
\end{proof}

\br

\section{Some properties of tame minimal systems}\label{tame-sec}

\begin{thm}\label{tame-inj}(\cite{G})
Let \XGa be a metric tame dynamical system.
Let $\Mf(X)$ denote the compact convex
set of probability measures on $X$ (with the weak$^*$
topology). Then each element $p\in E(X,\Ga)$ defines
an element $p_*\in E(\Mf(X),\Ga)$ and the map
$p \mapsto p_*$ is both a dynamical system and a semigroup
isomorphism of $E(X,\Ga)$ onto $E(\Mf(X),\Ga)$.
\end{thm}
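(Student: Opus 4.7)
The plan is to build $p_*$ from $p$ by integration against $f\circ p$, to recognise $p_*$ as the pointwise limit of any Fréchet sequence $\ga_n\to p$, and then to deduce bijectivity by showing that $\Phi\colon p\mapsto p_*$ is sequentially continuous and hence, since $E(X,\Ga)$ is Fréchet, fully continuous; its image is then compact and contains the dense copy of $\Ga$ inside $E(\Mf(X),\Ga)$.

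\emph{Construction and homomorphism properties.} By the BFT dichotomy and the tameness characterisation quoted in the introduction, pick a sequence $\ga_n\in\Ga$ with $\ga_n\to p$ pointwise on $X$. For each $f\in C(X)$ the function $f\circ p$ is a bounded Borel function (pointwise limit of the continuous $f\circ\ga_n$), so Riesz representation applied to the positive unital functional $f\mapsto\int f\circ p\,d\mu$ produces a unique $p_*\mu\in\Mf(X)$. Dominated convergence on the sequence $f\circ\ga_n$ gives $\ga_n\mu\to p_*\mu$ in weak$^*$ for every $\mu$, so $\ga_n\to p_*$ pointwise on $\Mf(X)$ and thus $p_*\in E(\Mf(X),\Ga)$, independent of the chosen sequence. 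The $\Ga$-equivariance $(\ga p)_*=\ga p_*$ is a direct substitution. The semigroup identity $(pq)_*=p_*q_*$ reduces to the change-of-variables formula $\int g\,d(q_*\mu)=\int g\circ q\,d\mu$ applied to $g=f\circ p$, and this extends from $g\in C(X)$ to bounded Borel $g$ by a monotone-class argument (both sides define measures agreeing on continuous test functions).

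\emph{Bijectivity.} Injectivity is immediate from $p_*\del_x=\del_{px}$. For surjectivity I first observe that $\Phi$ is sequentially continuous: if $p_n\to p$ pointwise on $X$, then $f\circ p_n\to f\circ p$ pointwise and uniformly bounded by $\|f\|_\infty$, so dominated convergence gives $(p_n)_*\mu\to p_*\mu$ weak$^*$ for every $\mu$, i.e.\ $\Phi(p_n)\to\Phi(p)$ pointwise on $\Mf(X)$. The substantive step is to upgrade this to full continuity, which is where the Fréchet property of $E(X,\Ga)$ enters decisively: if $\Phi$ were discontinuous at some $p$, there would exist an open $W\ni p_*$ with $p\in\ol{\Phi^{-1}(W^c)}$; by the Fréchet property one could then pick a sequence $p_n\in\Phi^{-1}(W^c)$ with $p_n\to p$, and sequential continuity would force $\Phi(p_n)\to p_*\in W$, contradicting $\Phi(p_n)\in W^c$ (closed). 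Hence $\Phi$ is continuous, its image is a compact subset of $E(\Mf(X),\Ga)$ containing the dense set $\Ga$, and therefore equals all of $\ol{\Ga}=E(\Mf(X),\Ga)$. The main obstacle in this plan is the verification that sequential continuity is genuinely enough here; everything else is a formal consequence of Baire-class-$1$ structure and dominated convergence.
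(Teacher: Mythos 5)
Your proposal is correct and follows essentially the same route as the paper: use the Fr\'echet property to produce a sequence $\ga_n\to p$, define $p_*\nu(f)=\nu(f\circ p)$ via Riesz representation, and pass to the limit by dominated convergence. The only difference is that you spell out the surjectivity and homomorphism steps (upgrading sequential continuity of $p\mapsto p_*$ to continuity via the Fr\'echet property, and the monotone-class extension of the change-of-variables formula) which the paper dispatches with ``it is easy to see'' and the density of $\Ga$ in both enveloping semigroups.
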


\begin{proof}
Since $E(X,\Ga)$ is Fr\'echet we have for every $p\in E$ a sequence
$\ga_i\to p$ of elements of $\Ga$ converging to
$p$. Now for every $f\in C(X)$ and every
probability measure $\nu\in \Mf(X)$ we get, by the
Riesz representation theorem and
Lebesgue's dominated convergence theorem,
$$
\ga_i\nu(f)=\nu(f\circ \ga_i)\to \nu(f\circ p):=p_*\nu(f).
$$
Since the Baire class 1 function $f\circ p$ is well defined
and does not depend upon the choice of the convergent
sequence $\ga_i\to p$, this defines the map $p \mapsto p_*$ uniquely.
It is easy to see that this map is an isomorphism of dynamical systems,
whence also a semigroup isomorphism. Finally as $\Ga$ is dense in both
enveloping semigroups, it follows that this isomorphism is onto.
\end{proof}

As we have seen, when \XGa is a metrizable tame system,
the enveloping semigroup $E(X,\Ga)$ is a separable Fr\'echet
space. Therefore, each element $p\in E$
is a limit of a sequence of elements of $\Ga$,
$p=\lim_{n\to \infty} \ga_n$. It follows that the subset
$C(p)$ of continuity points of each $p\in E$ is a
dense $G_\del$ subset of $X$. More generally, if $A\subset X$
is any closed subset then the set $C_A(p)$ of continuity points
of the map $p\rest A : A \to X$ is a dense $G_\del$ subset
of $A$. For an idempotent $v=v^2\in E$ we write
$C_v$ for $C_{\ol{vX}}(v)$.
For a special case of the next lemma see \cite[Theorem 9.2]{GM14}

%
%
%
%

\br

\begin{lem}\label{Cp-rel}
Let $(X,\Ga)$ be a metrizable tame dynamical system
and $\pi : X \to Y$ a RIM extension, with section $y \mapsto \nu_y$,
such that $(Y, \Ga)$ is a proximal system.
\begin{enumerate}
\item
$p \nu_y =\nu_{py}$ for every $p \in E(X,\Ga)$ and every $y \in Y$.
In particular, we have $\nu_{py}(pX) =1$.
\item
For every minimal idempotent $v \in E(X,\Ga)$ there is a unique $y \in Y$ such that $vy=y$.
For such $v$ we have $vX \subset \pi^{-1}(y)$, so that
$\nu_y(vX)=1$ and 
${\supp}(\nu_y) \subset vX \cap \pi^{-1}(y) = v\pi^{-1}(y)$.
\item
For an idempotent $v$ in $E(X,\Ga)$ we have $C_v \subset vX$.
\item
If $X$ is minimal then for every point $y$ of the dense $G_\del$ subset $Y_{fs} \subset Y$
(see Lemma \ref{RIM-fs})
and $v$ a minimal idempotent in $E(X,\Ga)$ with $vy=y$,
$C_v$ is a dense $G_\del$ subset of $\pi^{-1}(y)$,
and $vX$ is a residual subset of $\pi^{-1}(y)$.
\end{enumerate}
\end{lem}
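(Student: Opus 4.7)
Part (1) is the main substantive step; parts (2), (3), (4) then follow from it. For (1), tameness gives $E(X,\Ga)$ Fr\'echet, so any $p\in E(X,\Ga)$ is a sequential limit $p=\lim_n\ga_n$ with $\ga_n\in\Ga$. Continuity of the RIM section combined with $\Ga$-equivariance $\ga\nu_y=\nu_{\ga y}$ gives $\ga_n\nu_y=\nu_{\ga_n y}\to\nu_{py}$ in $\Mf(X)$ (using that $\ga_n y\to py$ in $Y$). On the other hand, Theorem~\ref{tame-inj} identifies the action of $p$ on $\Mf(X)$ with the enveloping-semigroup limit of the $\ga_n$, so $\ga_n\nu_y\to p\nu_y$. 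Comparing yields $p\nu_y=\nu_{py}$; the claim $\nu_{py}(pX)=1$ is immediate from $p^{-1}(pX)=X$, i.e. $\nu_{py}(pX)=(p_*\nu_y)(pX)=\nu_y(X)=1$.

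\textbf{For part (2),} lift $v$ to a minimal idempotent $\tilde v$ in the fixed minimal ideal $M\subset\beta\Ga$; this is standard because $\phi^{-1}(v)\cap M$ is a compact sub-semigroup of $M$, where $\phi:\beta\Ga\to E(X,\Ga)$ is the canonical continuous semigroup surjection, and any idempotent it contains is automatically minimal in $\beta\Ga$. Applying Lemma~\ref{G-min-id}(1) to the proximal system $(Y,\Ga)$ produces a unique $y\in Y$ with $\tilde v y'=y$ for every $y'\in Y$, so $y$ is the unique $v$-fixed point in $Y$, and $\pi(vx)=v\pi(x)=y$ gives $vX\subset\pi^{-1}(y)$. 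By (1), $v\nu_y=\nu_{vy}=\nu_y$, whence $\nu_y(vX)=\nu_y(v^{-1}(vX))=1$. Combined with $\nu_y(\pi^{-1}(y))=1$ this gives $\supp(\nu_y)\subset\overline{vX\cap\pi^{-1}(y)}$. Finally, $vX\cap\pi^{-1}(y)=v\pi^{-1}(y)$: any $x\in vX$ is a $v$-fixed point (idempotence), so if $\pi(x)=y$ then $x=vx\in v\pi^{-1}(y)$; the reverse inclusion is immediate.

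\textbf{Parts (3) and (4)} are then short. For (3): if $x\in C_v\subset\overline{vX}$, pick a net $vx_i\to x$; idempotence gives $v(vx_i)=vx_i\to x$, while continuity of $v|_{\overline{vX}}$ at $x$ gives $v(vx_i)\to vx$, so $vx=x\in vX$. For (4), Lemma~\ref{RIM-fs} supplies $\supp(\nu_y)=\pi^{-1}(y)$ when $y\in Y_{fs}$; combined with $\nu_y(vX)=1$ from (2) this forces $\pi^{-1}(y)\subset\overline{vX}$, while $vX\subset\pi^{-1}(y)$ together with the closedness of $\pi^{-1}(y)$ gives the reverse inclusion. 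Hence $\overline{vX}=\pi^{-1}(y)$. Since $v$ is the pointwise limit of a sequence of homeomorphisms $\ga_n$ and therefore Baire class $1$, its set of continuity points $C_v$ is a dense $G_\del$ subset of $\overline{vX}=\pi^{-1}(y)$; by (3), $C_v\subset vX$, so $vX$ is residual in $\pi^{-1}(y)$.

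\textbf{The crucial step} is the identity $p\nu_y=\nu_{py}$ in (1): upgrading pointwise convergence $\ga_n\to p$ in $E(X,\Ga)$ to weak-$*$ convergence $\ga_n\nu\to p\nu$ on $\Mf(X)$ is exactly where tameness enters, through Theorem~\ref{tame-inj}. Without it there is no reason the measures $\ga_n\nu_y$ should converge to the correct target, and the rest of the lemma would collapse. Everything else is bookkeeping with idempotents and the full-support property of the section at points of $Y_{fs}$.
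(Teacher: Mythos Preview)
Your proof is correct and follows essentially the same approach as the paper's, which is quite terse (part 1 is one sentence invoking continuity of the section, and part 2 is simply ``Clear''). You have filled in the details faithfully: the use of tameness via Theorem~\ref{tame-inj} to pass from $\ga_n\to p$ in $E(X,\Ga)$ to $\ga_n\nu_y\to p\nu_y$ in $\Mf(X)$ is exactly the mechanism the paper intends, and your arguments for parts (3) and (4) match the paper's almost verbatim.

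One small remark: in part (2) you invoke Lemma~\ref{G-min-id}, which is stated for \emph{minimal} proximal $Y$, while the present lemma only assumes $Y$ proximal. This is harmless, since the uniqueness of the $v$-fixed point holds without minimality (if $vy_1=y_1$ and $vy_2=y_2$ then $(y_1,y_2)$ is a minimal point of $Y\times Y$ and, by proximality of $Y$, its orbit closure meets the diagonal, forcing $y_1=y_2$), and in every application in the paper $Y$ is a factor of a minimal $X$ anyway. Your insertion of a closure in $\supp(\nu_y)\subset\overline{vX\cap\pi^{-1}(y)}$ is in fact more careful than the paper's own formulation.
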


\begin{proof}
1.\
As $p : X \to X$ is a Baire 1 function, the measure $p\nu_x$ is
well defined and the equality follows 
from the continuity of the section.

2.\ Clear.

3.\
Given $x \in C_v$ choose a sequence $x_n \in vX$ with
$\lim_{n\to\infty} x_n =x$.
We then have $vx=\lim_{n\to\infty} vx_n = \lim_{n\to\infty} x_n =x$,
hence $C_v \subset vX$.

4.\
For $vy = y \in Y_{fs}$ we have ${\supp}(\nu_y) = \pi^{-1}(y)$.
By part 2 we have $\nu_y(vX) =1$, whence $vX$ is dense in $\pi^{-1}(y)$.
Now $C_v = C_{\ol{vX}}(v) = C_{\pi^{-1}(y)}(v)$ and it follows that $C_v$
is a dense $G_\del$ subset of $\pi^{-1}(y)$.
By part 3 $C_v \subset vX$ and it follows that $vX$ is residual in $\pi^{-1}(y)$.

\end{proof}

\begin{cor}
Let $(X,\Ga)$ be a metrizable tame dynamical system
admitting a $\Ga$ invariant probability measure, and
let $v$ be a minimal idempotent in $E(X,\Ga)$. Then
$C(v) \subset vX$, $C(v)$ is a dense $G_\del$ subset of $X$,
and $vX$ is residual in $X$.
\end{cor}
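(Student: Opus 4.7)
The plan is to derive this corollary by specializing Lemma \ref{Cp-rel} to the trivial base. Under the standing minimality assumption of the section, consider the constant map $\pi : X \to \{*\}$ onto the one-point system, which is (vacuously) proximal. The hypothesis that $\mu$ is a $\Ga$-invariant probability measure makes $\pi$ a RIM extension via the constant section $* \mapsto \mu$. Minimality of $X$ forces $\supp(\mu) = X$, since $\supp(\mu)$ is a closed $\Ga$-invariant subset; consequently the set $Y_{fs} \subset Y$ of Lemma \ref{RIM-fs} equals $Y = \{*\}$.

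With these hypotheses in place, I would apply Lemma \ref{Cp-rel}(4) to the minimal idempotent $v$. Since $v \cdot * = *$ automatically and $* \in Y_{fs}$, the lemma yields that $C_v$ is a dense $G_\del$ subset of $\pi^{-1}(*) = X$ and that $vX$ is residual in $X$. Part (3) of the same lemma then gives the inclusion $C_v \subset vX$.

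The one subtlety to watch, which is the crux of the argument, is that the statement of the corollary refers to $C(v)$, the set of continuity points of $v : X \to X$, whereas the conclusions extracted from Lemma \ref{Cp-rel} are phrased in terms of $C_v := C_{\ol{vX}}(v)$, the continuity points of the restriction $v\rest \ol{vX}$. However, since $vX$ has just been shown to be residual (hence dense) in $X$, one has $\ol{vX} = X$, and so the two notions coincide: $C_v = C(v)$. This identification closes the loop: $C(v) = C_v$ is a dense $G_\del$ subset of $X$ contained in $vX$, while $vX$ itself is residual in $X$, as required.
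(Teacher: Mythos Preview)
Your proof is correct and is exactly the argument the paper intends: the corollary is stated without proof precisely because it is the specialization of Lemma~\ref{Cp-rel} to the trivial factor $Y=\{*\}$, with the invariant measure $\mu$ serving as the (constant) section. Your observation that $\ol{vX}=X$ reconciles the notation $C(v)$ with $C_v$ is the only point requiring comment, and you handle it correctly.
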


\br

\begin{thm}\label{PD}
Let $(X,\Ga)$ be a metrizable tame dynamical system
and $\pi : X \to Y$ a RIM extension, with section $y \mapsto \nu_y$,
such that $(Y,\Ga)$ is a proximal system.
Then the extension $\pi$ is point distal.
In particular, a metric tame minimal system admitting an invariant 
probability measure is point distal.
\end{thm}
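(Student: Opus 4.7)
The plan is to exhibit a single $Y$-distal point $x_0 \in X$, which by Definition~\ref{Y-pd-def} suffices to prove $\pi$ is a point distal extension. The ingredients are Lemmas~\ref{RIM-fs}, \ref{Cp-rel}, \ref{G-min-id}, and a standard manipulation in the Ellis semigroup.

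First I would select $y_0 \in Y_{fs}$ from Lemma~\ref{RIM-fs}, so that $\supp(\nu_{y_0}) = \pi^{-1}(y_0)$. Since $(X,\Ga)$ is minimal, any point of $\pi^{-1}(y_0)$ is almost periodic, so some minimal idempotent $v \in E(X,\Ga)$ fixes a point of $\pi^{-1}(y_0)$; in particular $v y_0 = y_0$. Because $(Y,\Ga)$ is proximal, Lemma~\ref{G-min-id}(1) makes $y_0$ the unique $v$-fixed point of $Y$, and Lemma~\ref{Cp-rel}(2) then gives $vX \subset \pi^{-1}(y_0)$ together with $\nu_{y_0}(vX)=1$. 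Combined with full support this yields $\ol{vX} = \pi^{-1}(y_0)$. Lemma~\ref{Cp-rel}(3),(4) now produces a dense $G_\del$ set $C_v \subset vX \cap \pi^{-1}(y_0)$ of continuity points of $v|_{\pi^{-1}(y_0)}$; pick $x_0 \in C_v$, so that $vx_0 = x_0$ and $\pi(x_0) = y_0$.

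I claim $x_0$ is $Y$-distal. Let $x'_0 \in P[x_0] \cap \pi^{-1}(y_0)$. If $p \in E$ witnesses proximality ($p x_0 = p x'_0$), then taking a minimal left ideal $L \ni v$ we have $Lp = L$, hence $v = rp$ for some $r \in L$, giving $v x_0 = r p x_0 = r p x'_0 = v x'_0$; thus $v x'_0 = x_0$. To force $x'_0 = x_0$ I would pick a minimal idempotent $v'$ with $v' x'_0 = x'_0$, observe symmetrically that $v' x_0 = x'_0$, and arrange (by having preselected $x_0$ within the dense $G_\del$ intersection $C_v \cap C_{v'}$) that $x_0 \in C_{v'}$. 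A sequence $w_n \in v'X$ with $w_n \to x_0$ (existing by density of $v'X$ in $\pi^{-1}(y_0)$) then satisfies both $v' w_n = w_n \to x_0$ (since $v'$ is the identity on $v'X$) and $v' w_n \to v' x_0 = x'_0$ (by continuity of $v'$ at $x_0$ along $\pi^{-1}(y_0)$), forcing $x_0 = x'_0$.

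The main obstacle is the circular dependence in the last step: the idempotent $v'$ depends on $x'_0$, which depends on $x_0$. Discharging it requires showing $x_0$ can be chosen simultaneously in $C_{v'}$ over all minimal idempotents $v'$ fixing $y_0$, via a Baire-category argument exploiting the Fréchet (Rosenthal-compact) structure of $E(X,\Ga)$, which bounds $|E(X,\Ga)|\le 2^{\aleph_0}$ and allows one to reduce to countably many distinct $C_{v'}$ on $\pi^{-1}(y_0)$. The ``in particular'' clause is the special case $Y = \{\mathrm{pt}\}$, where the invariant probability measure is the constant RIM section, $\pi^{-1}(y_0)=X$, and continuity takes place on all of $X$, so no such delicate arrangement is needed.
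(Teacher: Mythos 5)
Your strategy is genuinely different from the paper's: you try to exhibit a single $Y$-distal point directly from Definition \ref{Y-pd-def}, whereas the paper verifies van der Woude's criterion (Theorem \ref{vdw-rel}) that every S-set $W \subset R_\pi$ is minimal. The paper's route combines Corollary \ref{so-res} (semiopenness of the projections on almost every fiber $W_y$), the Ulam theorem, and Lemma \ref{Cp-rel}.4 to find, for a single well-chosen $y$ and a single minimal idempotent $v$ with $vy=y$, a transitive point $(x,x')$ of $W$ lying in $\mathbf{p}_1^{-1}(vX)\cap\mathbf{p}_2^{-1}(vX)$; then $v(x,x')=(x,x')$ forces $\ol{\Ga(x,x')}=W$ to be minimal. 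The whole point of that detour is to avoid exactly the difficulty your proof runs into: the paper only ever needs \emph{one} idempotent, fixed in advance, and never has to control an idempotent that depends on an unknown proximal partner.

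That difficulty is a genuine gap in your argument, in two places. First, the identity $Lp=L$ is false: $Lp$ is again a minimal left ideal, but generally a different one, so the correct conclusion is only that \emph{some} minimal idempotent $w$ (in the minimal left ideal $L'\subset E(X,\Ga)p$, all of whose elements identify $x_0$ and $x_0'$) satisfies $wx_0=wx_0'=x_0$; this is repairable, and likewise you can choose $v'$ to be the idempotent of $L'$ fixing $x_0'$ so that $v'x_0=x_0'$. Second, and fatally as written, the circularity is not discharged by your cardinality remark: the BFT dichotomy gives $|E(X,\Ga)|\le 2^{\aleph_0}$, which is uncountable, and there is no reason for the family $\{C_{v'}\}$, as $v'$ ranges over the minimal idempotents with $v'y_0=y_0$, to consist of only countably many distinct dense $G_\del$ subsets of $\pi^{-1}(y_0)$; an intersection of continuum many dense $G_\del$ sets need not be dense or even nonempty, so you cannot preselect $x_0$ in $\bigcap_{v'} C_{v'}$. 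Since $v'$ (equivalently the minimal left ideal $L'$) depends on the proximal witness $p$, which depends on the unknown point $x_0'$, the final continuity step $v'w_n\to v'x_0$ is not justified for the $v'$ you actually need. To make a direct argument of this type work you would have to replace the appeal to $C_{v'}$ by something uniform over all minimal left ideals, and that is precisely the extra leverage the paper extracts from the S-set/semiopenness machinery.
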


\begin{proof}
We will prove that the condition in Theorem \ref{vdw-rel} holds; i.e.
that every $S$-set in $R_\pi$ is minimal. 
So let $W\subset R_\pi$ be an $S$-set.
For $y \in Y$ we write $W_y = W \cap  (\pi^{-1}(y) \times \pi^{-1}(y))$.

By Corollary \ref{so-res} there is a dense $G_\del$ subset $Y_{so}
\subset Y$ such that for each $y \in Y_{so}$ the projection maps
${\bf{p}}_i : W_y \to \pi^{-1}(y)\ (i=1,2)$
are semiopen.
Next recall that the set  $W_{tr}$ of transitive points of $W$ forms a 
dense $G_\del$ subset of $W$. By the Ulam theorem
there is a dense $G_\del$ subset $Y_{tr} \subset Y$ such that for each $y \in Y_{tr}$,
$W_{tr} \cap  (\pi^{-1}(y) \times \pi^{-1}(y))$ is a dense
$G_\del$ subset of $W_y$.
Thus, the set $Y_{so} \cap Y_{tr} \cap Y_{fs}$ is also a dense $G_\del$ subset of $Y$
(see Lemma \ref{RIM-fs}.4 for the definition of $Y_{fs}$). 
Pick a point $y$ in this intersection.

Let $v$ be a minimal idempotent in $E(X,\Ga)$ such that $vy =y$.
By Lemma \ref{Cp-rel}.4 $vX \subset\pi^{-1}(y)$ is a residual subset of $\pi^{-1}(y)$.
As $y \in Y_{so}$ we conclude that the sets $W \cap {\bf{p}_i}^{-1}(vX)\ (i=1,2)$ are
residual subsets of $W_y$ (see Lemma \ref{so-lem}).
Now pick a point $(x,x')$ in the  set
$$
W_{tr} \cap ({\bf{p}_1}^{-1}(vX) \cap {\bf{p}_2}^{-1}(vX))
$$
(which is residual in $W_y$).
We then have $v(x, x') = (vx, vx') = (x, x')$, so that $\ol{G(x,x')}$ is a minimal set.
On the other hand, being a point in $W_{tr}$, we have 
$\ol{G(x,x')} =W$ and our proof is complete.
%
For the last assertion, take $Y$ to be the trivial one point system.
\end{proof}


\br

\section{The structure of tame metric minimal dynamical systems}\label{main-sec}

\begin{defn}
Let us call a minimal dynamical system $(X,\Ga)$ {\em standard} if it has the following structure:
$$
X \overset{\pi}\to Y,
$$
where $(Y, \Ga)$ is a strongly proximal system
and $\pi$ is a RIM extension.
\end{defn}


The next theorem, from \cite{G0}, is just a description of the RIM shadow diagram (\ref{RIM}),
associated to the map $X \to \{*\}$, the trivial one point system. 
The fact that, when $X$ is tame so is $\tilde{X}$, follows from Theorem \ref{tame-inj}.

\begin{thm}\label{RIM-shadow}
Every minimal system $(X,\Ga)$ has a standard extension
$\tilde{X} \overset{\eta}{\to} X$:
\begin{equation}\label{standard}
\xymatrix
{
& \tilde{X} \ar[dr]^\pi  \ar[dl]_\eta &\\
X & & Y,
}
\end{equation}
%
where $(Y, \Ga)$ is a strongly proximal system, $\eta$ is a strongly proximal extension, 
and $\pi$ a RIM extension.
If $(X,\Ga)$ is tame then so is $(\tilde{X},\Ga)$.
\end{thm}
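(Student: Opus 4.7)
The plan is to derive the statement directly from the RIM shadow construction (\ref{RIM}) already recalled in Section~\ref{sur-sec}, applied to the trivial homomorphism $X \to \{*\}$, and then to invoke Theorem~\ref{tame-inj} for the tameness clause.

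First I would specialize the RIM shadow diagram (\ref{RIM}) to the case $Y = \{*\}$. The general construction takes a homomorphism $\pi : X \to Y$ of minimal metric systems and produces $\tilde{X} = X \vee \tilde{Y}$ together with $\tilde{\pi} : \tilde{X} \to \tilde{Y}$ which admits a relatively invariant measure and strongly proximal maps $\tet : \tilde{Y} \to Y$ and $\tilde{\tet} : \tilde{X} \to X$. With $Y$ trivial, the factor $\tet : \tilde{Y} \to \{*\}$ being a strongly proximal extension says exactly that $\tilde{Y}$ itself is a strongly proximal $\Ga$-system. Setting $Y := \tilde{Y}$, $\pi := \tilde{\pi}$, and $\eta := \tilde{\tet}$ then delivers the desired diagram (\ref{standard}) with $\eta$ strongly proximal, $\pi$ a RIM extension, and $Y$ strongly proximal.

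Second, to establish the tameness assertion, I would recall that in the explicit description of (\ref{RIM}) one has $\tilde{Y} \subset \Mf(X)$, the orbit closure of a suitable measure, and $\tilde{X} = X \vee \tilde{Y} \subset X \times \tilde{Y} \subset X \times \Mf(X)$. By Theorem~\ref{tame-inj}, when $X$ is tame the map $p \mapsto p_*$ is a dynamical and semigroup isomorphism $E(X,\Ga) \to E(\Mf(X),\Ga)$; in particular $E(\Mf(X),\Ga)$ is again Fr\'echet, so $(\Mf(X), \Ga)$ is tame. Since tameness is inherited by products of two tame systems and by closed invariant subsystems (both facts being immediate from the characterization via Baire class~$1$ elements in the enveloping semigroup, or equivalently from the BFT dichotomy), it follows that $\tilde{X}$, a subsystem of the tame system $X \times \Mf(X)$, is itself tame.

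The only genuinely delicate step is the existence part of the RIM shadow diagram (ensuring that $\tilde{\pi}$ is RIM and $\tet$, $\tilde{\tet}$ are strongly proximal), but this is the content of the cited result from \cite{G0} and is assumed as background here. The remaining identifications (the trivialization of the base and the realization of $\tilde{X}$ inside $X\times \Mf(X)$) are routine unpackings of the construction, and the tameness conclusion is then an immediate consequence of Theorem~\ref{tame-inj} and the standard permanence properties of tameness.
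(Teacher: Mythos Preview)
Your proposal is correct and follows exactly the paper's approach: the paper simply remarks that the theorem is the RIM shadow diagram (\ref{RIM}) from \cite{G0} specialized to the trivial map $X \to \{*\}$, and that tameness of $\tilde{X}$ follows from Theorem~\ref{tame-inj}. Your additional justification of the tameness step (realizing $\tilde{X}$ inside $X \times \Mf(X)$ and using closure of tameness under products and subsystems) is a correct and helpful unpacking of what the paper leaves implicit.
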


%
%

%

\br

We can now state and prove our main result.

\begin{thm}\label{main}
Let $\Ga$ be any group and \XGa a
metric tame minimal system. 
Then there exists a commutative diagram

\begin{equation}\label{red}
\xymatrix
{
& \tilde{X} \ar[dd]_\pi  \ar[dl]_\eta & X^* \ar[l]_-{\tet^*} \ar[d]^{\iota}\\
X & & Z \ar[d]^\sig\\
& Y & Y^* \ar[l]^\tet
}
\end{equation}
where
\begin{enumerate}
\item
$\tilde{X}$ is metric minimal and tame.
\item
$\eta$ is a strongly proximal extension.
\item
$\pi$ is a RIM and point distal extension, with unique section $y \mapsto \nu_y$.
\item
$(Y, \Gamma)$ is a strongly proximal system.
\item
The maps $\tet$, $\tet^*$ and $\iota$ are almost one-to-one extensions.
\item
$\sig$ is an isometric extension.
\end{enumerate}
Moreover, the extension $\pi^* = \sig \circ \iota$ is a RIM extension
with a unique section $y^* \mapsto \nu^*_{y^*}$. 
For each $y^* \in Y^*$,  $\tet^*(\nu^*_{y^*}) = \nu_{\tet(y^*)}$
 and 
the measure $\iota(\nu^*_{y^*})$ is
the Haar measure $\la_{y^*}$ on the fiber $\sig^{-1}(y^*)$. 
The restriction $\iota :  ((\pi^*)^{-1}(y^*), \nu^*_{y^*})  \to 
(\sig^{-1}(y^*), \la_{y^*})$ 
is a measure theoretical isomorphism.
\end{thm}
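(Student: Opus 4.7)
The plan is to build the diagram by two shadow-diagram reductions, invoke Theorem \ref{PD} and the Veech--Ellis relative structure Theorem \ref{PD-rel}, and finally read off the measure-theoretic statements from Proposition \ref{diff}. First I would apply the RIM shadow diagram, Theorem \ref{RIM-shadow}, to the trivial map $X \to \{*\}$: this produces the strongly proximal extension $\eta \colon \tilde{X} \to X$, a strongly proximal system $Y$, and a RIM extension $\pi \colon \tilde{X} \to Y$ with continuous section $y \mapsto \nu_y$, while tameness passes from $X$ to $\tilde{X}$. Since $\tilde{X}$ is tame, $\pi$ is RIM, and $Y$ is proximal, Theorem \ref{PD} gives that $\pi$ is point distal, settling items (1)--(4); uniqueness of the section follows from proximality of $Y$ and the affine $\Ga$-action on the convex set of sections.

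Next I would pass to the O-shadow diagram (\ref{O}) of $\pi$, obtaining almost one-to-one extensions $\tet^* \colon X^* \to \tilde{X}$ and $\tet \colon Y^* \to Y$ and an open map $\pi^* \colon X^* \to Y^*$. Lemma \ref{ORIM} shows $\pi^*$ is RIM, and $Y^*$ is proximal since it is almost one-to-one over the proximal $Y$. A standard argument through $\tet^*$ shows that $X^*$ remains tame, so Theorem \ref{PD} again applies and $\pi^*$ is point distal; the open case of Theorem \ref{PD-rel} then promotes this to strictly AI. Let $\sig \colon Z \to Y^*$ be the maximal intermediate isometric extension of $Y^*$ inside $X^*$, obtained via Theorem \ref{eq-ext}, and write $\iota \colon X^* \to Z$ for the residual factor, so that $\pi^* = \sig \circ \iota$.

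I expect the main obstacle to be showing that $\iota$ is almost one-to-one: by maximality of $Z$ in the strictly AI tower there is no further intermediate isometric extension above $Z$, but one must rule out a ``hidden'' isometric piece inside $\iota$ that is not visible as an intermediate factor. I would do this by applying Corollary \ref{finite} (more precisely, the $\tau$-topology machinery of Lemma \ref{tau}) fiberwise over $Y^*$, exploiting the tameness of $X^*$ and the point-distal structure of $\pi^*$ to force any nontrivial isometric step above $Z$ to contradict the maximality of $Z$. Once this is in place, the AI tower above $Z$ collapses to a single almost one-to-one map $\iota$, producing the required diagram.

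Finally, for the measure-theoretic conclusions, the section $y \mapsto \nu_y$ lifts canonically through $\tet^*$ to a section $y^* \mapsto \nu^*_{y^*}$ of $\pi^*$ satisfying $\tet^*(\nu^*_{y^*}) = \nu_{\tet(y^*)}$; existence uses almost one-to-one-ness of $\tet^*$ together with the lower-semicontinuity argument from Lemma \ref{ORIM}, and uniqueness again follows from proximality of $Y^*$. To identify $\iota(\nu^*_{y^*})$ with the Haar measure $\la_{y^*}$ on $\sig^{-1}(y^*)$, I would apply Proposition \ref{diff} to the tower $X^* \to Z \to Y^*$, viewing $Y^*$ as the maximal proximal factor of $X^*$: it furnishes a dense $G_\del$ set of measures on each fiber whose $\Ga$-orbit closures meet the fibered Haar measures, and $\Ga$-invariance together with continuity of the section pin down $\iota(\nu^*_{y^*}) = \la_{y^*}$. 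The measure-theoretical isomorphism $\iota \colon ((\pi^*)^{-1}(y^*),\nu^*_{y^*}) \to (\sig^{-1}(y^*),\la_{y^*})$ then follows because almost one-to-one-ness of $\iota$ implies $\iota$ is injective on a $\nu^*_{y^*}$-conull subset, so the push-forward is an isomorphism of measure spaces.
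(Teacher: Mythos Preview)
Your outline follows the paper's broad arc (RIM shadow, then O-shadow, then collapse an AI tower), but it has a genuine gap at the crucial step.

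\textbf{The claim that $X^*$ is tame is not justified, and the paper explicitly avoids it.} Tameness is preserved by \emph{factors}, not by almost one-to-one \emph{extensions}; there is no ``standard argument through $\tet^*$'' giving tameness of $X^*$. The paper notes this in step 2: after forming the O-shadow diagram, ``we no longer know that the system $X^*$ is tame.'' Your subsequent invocations of Theorem~\ref{PD}, of tameness of $X^*$ in the $\iota$-collapse argument, and of Proposition~\ref{diff} on the tower $X^* \to Z \to Y^*$ all rest on this unsubstantiated claim. The paper instead works the entire collapse argument back in the tame system $\tilde{X}$: it pulls the dense $G_\delta$ set $\hat R\subset\Mf(\hat X_0)$ furnished by Proposition~\ref{diff} back through the (semiopen) map $\psi$, then pushes it forward along $\tet^*\circ\tet_1^*$ into $\Mf(\tilde X_0)$, and only there intersects with the continuity set $C_u$ of the idempotent $u$ acting on $\Mf(\tilde X)$ --- this last step is where tameness is actually used, and it is usable precisely because $\tilde X$, not $X^*$, is tame.

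Two further issues. First, Proposition~\ref{diff} and Lemma~\ref{tau} require a tower $X\overset{\rho}{\to}Z^*\overset{\tet}{\to}Z\overset{\sig}{\to}Y$ with \emph{two} isometric extensions sandwiching an almost one-to-one map; your tower $X^*\to Z\to Y^*$ has only one, and Corollary~\ref{finite} additionally needs $\sig$ to be finite-to-one, which is not given here. The paper's argument therefore builds \emph{two} stages of the AI tower (producing the second isometric step $\rho:\hat X\to Z^*$) and derives a contradiction by showing the Haar fibre measures for $\rho$ must be point masses. Second, your argument for the measure-theoretical isomorphism --- that almost one-to-one-ness makes $\iota$ injective on a $\nu^*_{y^*}$-conull set --- confuses topological residuality with measure-conullity. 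The paper instead disintegrates $\nu^*_{y^*}$ over $\la_{y^*}$ and uses that $\iota$ is proximal while $\nu_y$ is carried by $v\tilde X$ (Lemma~\ref{Cp-rel}.2), a set of pairwise distal points, forcing almost every disintegrand to be a point mass.
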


%

\begin{proof}

1.\
By Theorem \ref{RIM-shadow} we have the RIM shadow diagram 
(\ref{standard}) with tame $\tilde{X}$.
By Theorem \ref{PD} the RIM extension $\pi$ is point distal. 

\br

2.\
Next apply the structure theorem of point distal extensions, Theorem \ref{PD-rel},
and consider the first two stages of the corresponding AI tower
\begin{equation}\label{two-steps}
\xymatrix
{
& \tilde{X}\ar[dl]_{\eta} \ar[dd]_{\pi}  &  X^* \ar[l]_-{\tet^*}  \ar[dd]_{\pi^*}    \ar[ddr]^\iota  &  
& X^{**}\ar[ll]_-{\tet_1^*} \ar[dd]_{\pi^{**}} \ar[ddr]^{\psi}  &\\
X & & & & & \\
& Y  & Y^* \ar[l]^{\tet} &  Z \ar[l]^\sig & Z^*\ar[l]^{\tet_1} & \hat{X} \ar[l]^\rho}
\end{equation}
Thus 
\begin{itemize}
\item
the diagram
\begin{equation*}
\xymatrix
{
\tilde{X} \ar[d]_{\pi}  &  X^*\ar[l]_-{\tet^*} \ar[d]^{\pi^*}\\
Y  & Y^*\ar[l]^{\tet}
}
\end{equation*}
is the O-shadow diagram associated to $\pi$
(note that we no longer know that the system $X^*$
is tame),  
\item
$\sig$ is the largest intermediate
isometric extension for $\pi^*$, 
\item
the diagram
\begin{equation*}
\xymatrix
{
X^*   \ar[d]_\iota    & X^{**}\ar[l]_{\tet_1^*} \ar[d]^{\pi^{**}} \\
Z & Z^*\ar[l]^{\tet_1}
}
\end{equation*}
is the O-shadow diagram associated to $\iota$, 
\item
and finally, $\rho :\hat{X} \to Z^*$ is the largest intermediate
isometric extension for $\pi^{**}$. 
\end{itemize}

%
%

Note that if the map $\rho$ is an isomorphism; i.e. the largest 
isometric extension corresponding to $\pi^{**}$ is trivial,
then the AI tower for $\pi$ collapses to the diagram (\ref{red})
which is exactly what we are after.
Thus our goal is to show that indeed $\rho$ is necessarily an isomorphism.

\br

%
%

3.\ 
In order to arrive at a contradiction, we now assume that $\rho$ is a nontrivial isometric extension.
The map $\rho$, being isometric, is open RIM extension with unique section 
$z^* \mapsto \la_{z^*}$ (where $\la_{z^*}$ is the Haar measure on the homogeneous
space of the associated compact group, see the diagram (\ref{g-ext})). 

Consider the subdiagram of (\ref{two-steps})
$$
\hat{X} \overset{\rho}{\to} Z^* \overset{\tet_1}{\to} Z  \overset{\sig}{\to} Y^*.
$$
We apply Proposition \ref{diff} (with
$\hat{X}$ in the role of $X$,
$Y^*$ in the role of $Y$, $\tet_1$ replacing $\tet$,  and
$\phi = \sig \circ \tet_1 \circ \rho : \hat{X} \to Y^*$).
Let $y^*_0 \in Y^*$ be the unique point with $uy^*_0=y^*_0$. 
Denote
$\hat{X}_0 = \phi^{-1}(y^*_0)$ and $X_0^{**} = \psi^{-1}(\hat{X}_0)
= (\pi^{*} \circ \tet_1^*)^{-1}(y_0^*) $.

Set $\Mf_m(\hat{X}_0)=\{\La(\nu): \nu \in \Mf(Z_0^*)\}$.
By Proposition \ref{diff}, the set
$$
\hat{R} = \{\nu \in \Mf(\hat{X}_0): \text{the orbit closure of $\nu$ meets $\Mf_m(\hat{X}_0)$}\}
$$
is a dense $G_\del$ subset of $\Mf(\hat{X}_0)$.

%

\br

%
%
%
%
%
%
%
%

Set $\tilde{X}_0 = (\tet^* \circ \tet_1^*)(X_0^{**})$.
Also, set
\begin{gather*}
\Mf_m(X_0^{**}) := \psi^{-1}(\Mf_m(\hat X_0))
\ {\text{and}}\ \\
\Mf_m(\tilde{X}_0) :=(\tet^* \circ \tet_1^*)( \Mf_m(X_0^{**}))=(\tet^* \circ \tet_1^*)(\psi^{-1}(\Mf_m(\hat X_0))).
\end{gather*}
Let
\begin{equation*}\label{R*}
R^{**} :=\psi^{-1}(\hat R) =
\{\xi \in \Mf(X_0^{**}): \text{the orbit closure of $\xi$ meets
$\Mf_m(X_0^{**})$}\}
\end{equation*}
and
$$
R' := (\tet^* \circ \tet_1^*)(R^{**}) \subset R,
$$ 
where
\begin{equation}\label{R}
R =
\{\xi \in \Mf(\tilde{X}_0): \text{the orbit closure of $\xi$ meets
$\Mf_m(\tilde{X}_0)$}\}.
\end{equation}
By Lemma \ref{so-min} the map $\psi: X^{**} \to \hat{X}$ is semiopen
and by Lemma \ref{so-res-lem} there is a dense $G_\del$ subset $Y^*_0 \subset Y^*$
such that $\psi : (\pi^{*} \circ \tet_1^*)^{-1}(y^*) \to \phi^{-1}(y^*)$ is semiopen
for every $y^* \in Y^*_0$. By Lemma \ref{G-min-id}.2 we may and will assume
that $y^*_0 \in Y^*_0$, so that $\psi : X^{**}_0 \to \hat{X}_0$ is semiopen.
Theorem \ref{so-thm} implies that $\psi: \Mf(X^{**}) \to \Mf(\hat{X}_0)$
is semiopen as well.

Now $\hat R$ is a dense subset of $\Mf(\hat{X}_0)$ and Lemma \ref{so-lem}
implies that $R^{**}$ is a dense subset of $\Mf(X_0^{**})$.
Therefore $R'$ is a dense subset of $\Mf(\tilde{X}_0)$.
From the definition of $R$ (\ref{R}) it is easy to deduce that it
is a $G_\del$ set and because it contains $R'$,
it is in fact a dense $G_\del$ subset of $\Mf(\tilde{X}_0)$.

\br

4.\
Recall that the system $(\tilde{X},\Ga)$ is tame
and, by Theorem \ref{tame-inj} so is $(\Mf(\tilde{X}),\Ga)$.
Moreover we have
$E(\tilde{X},\Ga)=E(\mathfrak{M}(\tilde{X}),\Ga)$.
In particular $u\in E(\mathfrak{M}(\tilde{X}),\Ga)$, as a Baire
1 function, has a dense $G_\del$ set of continuity points.
Moreover, $C_u = C_{\ol{u{\mathfrak{M}(\tilde{X})}}}(u)$,
the set of continuity points of $u$ restricted to the set
$\ol{u{\mathfrak{M}(\tilde{X})}}$, is a dense $G_\del$ subset of $\ol{u{\mathfrak{M}(\tilde{X})}}$
and, by Lemma \ref{Cp-rel}.3,
$$
C_u  \subset
 u\mathfrak{M}(\tilde{X}).
 $$
%

As $\rho$ and $\sig$ are isometric extensions and $\tet_1$ is
an almost one-to-one extension,
it is easy to see that $\ol{u\hat{X}} = \hat{X}_0$.
As $\psi$ is semiopen and a point distal extension, it follows that $\ol{uX^{**}} = X_0^{**}$
and therefore also that $\ol{u\tilde{X}} =\tilde{X}_0$.
It follows that the collection of finite convex combinations
of point masses picked from $u\tilde{X}$ forms a dense
subset of $\Mf(\tilde{X}_0)$.
In turn, this implies that $\ol{u\Mf(\tilde{X})} = \Mf(\tilde{X}_0)$
and we conclude that
$S:=C_u \cap R \subset  u\mathfrak{M}(\tilde{X})$ is a dense $G_\del$ 
subset of $\Mf(\tilde{X}_0)$.


Now if $\nu \in S$ then
$u\nu =\nu$ and, $u$ being a minimal idempotent, the
closure of the $\Ga$ orbit of $\nu$ in $\mathfrak{M}(\tilde{X})$
is a minimal set.
On the other hand, $\nu$ being also an element of $R$, 
this orbit closure meets $\Mf_m(\tilde{X}_0)$.
We conclude that $\ol{\Ga \nu}$ is contained in $\Mf_m(\tilde{X})$. 
In particular $\nu \in \Mf_m(\tilde{X}) \cap \Mf(\tilde{X}_0) = \Mf_m(\tilde{X}_0)$
and we conclude that
$S \subset \Mf_m(\tilde{X}_0)$, whence
the equality
\begin{equation}\label{equ}
\Mf_m(\tilde{X}_0)=\Mf(\tilde{X}_0).
\end{equation}

\br

5.\
Given a point $\tilde{x} \in \tilde{X}_0$, the corresponding point mass
$\del_{\tilde{x}}\in \Mf(\tilde{X}_0)$ must have, by (\ref{equ}),
a preimage in $\Mf_m(X^{**}_0)$, say $(\tet^* \circ \tet_1^*)(\xi)=\del_{\tilde{x}}$
with $\xi \in \Mf_m(X^{**}_0)$.
For $\tilde{x}$ with ${(\tet^* \circ \tet_1^*)}^{-1}(\tilde{x})=\{x^{**}\}$ a singleton,
we must have
$\xi=\del_{x^{**}} \in \Mf_m(X^{**}_0)$ and therefore $\psi_*(\del_{x^{**}})=
\del_{\hat x}\in \Mf_m(\hat X_0)$ with $\hat x = \psi(x^{**})$.
By the definition of $\Mf_m(\hat X_0)$ there exists a measure $\zeta
\in \Mf(Z_0^*)$ with
$$
\del_{\hat x} = \La(\zeta) =\int_{Z^*} \la_{z^*}\, d\zeta(z^*).
$$
This clearly implies that the measure $\zeta$ is a point mass,
say $\zeta=\del_{z^*}$ and that the measure $\la_{z^*}$ --- which is the
Haar measure on the homogeneous space which forms the fiber
$\rho^{-1}(z^*) \subset \hat X$ --- is also a degenerate point mass.
That is, the isometric extension $\rho$ is in fact an isomorphism.
As we observed above, the collapse of $\rho$ implies the collapse of the entire
AI tower for $\pi$,
and the diagram (\ref{red}) is obtained.

\br

6.\
By Lemma \ref{ORIM}
the section for $\pi^*$ has the form $y^* \mapsto \nu^*_{y^*} = \nu_{\tet(y^*)} \times \del_{\tet(y^*)}$,
whence $\tet^*(\nu^*_{y^*}) = \nu_{\tet(y^*)}$.
Now the extension $\sig : Z \to Y^*$, being isometric, admits
a unique section $y^* \mapsto \la_{y^*}$, where $\la_{y^*}$
is the Haar measure on the homogeneous space $\sig^{-1}(y^*)$.
It follows that $\iota(\nu^*_{y^*})$ is
the Haar measure $\la_{y^*}$ on the fiber $\sig^{-1}(y^*)$. 

From the construction of the O-shadow diagram it follows that 
the map 
$$
\tet^* : (\pi^*)^{-1}(y^*) \to \pi^{-1}(y)
$$ 
(with $y = \tet(y^*)$)
is a homeomorphism. Therefore we can study the nature of the map
$\iota :  ((\pi^*)^{-1}(y^*), \nu_{y^*})  \to (\sig^{-1}(y^*), \la_{y^*})$ 
via its pushforward 
$$
\iota \circ (\tet^*)^{-1} :  (\pi^{-1}(y), \nu_{y})  \to (\sig^{-1}(y^*), \la_{y^*}).
$$
Let $vy^*=y^*$ for a minimal idempotent  $v$. Then also $vy=y$
and, $\tilde{X}$ being tame, we have $\nu_y(v\tilde{X}) =1$
by Lemma \ref{Cp-rel}.2.
We disintegrate $\nu^*_{y^*}$ over $\la_{y*}$
$$
\nu^*_{y^*} = \int_{\sig^{-1}(y^*)} \xi_z d\, \la_{y*}(z),
$$
and correspondingly
$\nu_y$ over $\la_y$, say
$$
\nu_y = \int \xi_\om d\, \la_y(\om).
$$
Now in the first disintegration, for $\la_{y^*}$ almost every $z$,
the measure $\xi_z$ is concentrated on  $\iota^{-1}(z)$, which
consists of pairwise proximal points ($\iota$ being a proximal extension).
On the other hand, any two points in $v\tilde{X}$ are distal. We therefore
conclude that for $\la_{y^*}$ almost every $z$, the measure $\xi_\om$,
and therefore also $\xi_z$, is a point mass. This means that indeed
$\iota :  ((\pi^*)^{-1}(y^*), \nu_{y^*})  \to (\sig^{-1}(y^*), \la_{y^*})$ 
is a measure theoretical isomorphism.

Since the Haar measures section is unique we conclude that the section $y^* \mapsto \nu^*_{y^*}$
is unique. Finally, by Lemma \ref{ORIM}, also $y \mapsto \nu_y$ is unique.
\end{proof}

%


\noindent\begin{cor}\label{cor}
With notations as in Theorem \ref{main}
\begin{enumerate}
\item
If the RIM extension $\pi$ happens to be open then the diagram (\ref{red}) 
reduces to the simplified form
\begin{equation}\label{cor-diag}
\xymatrix
{
& \tilde{X}  \ar[dl]_\eta  \ar[d]^{\iota}\\
X &  Z \ar[d]^\sig\\
& Y
}
\end{equation}
with $\pi = \sig \circ \iota$.
\item
If the system $(X, \Ga)$ admits an invariant probability measure $\mu$ then 
$X$ is almost automorphic,
i.e. it has the form $X \overset{\iota}{\to} Z$, where $Z$ is equicontinuous and $\iota$
is almost one-to-one. Moreover, $\mu$ is unique and the map $\iota$ is a measure theoretical isomorphism $\iota : (X, \mu, \Ga) \to (Z, \la, \Ga)$, where  $\la$ is the Haar measure on the 
homogeneous space $Z$.
\item
Thus, when $\Ga$ is amenable this latter situation is the rule.
\end{enumerate}
\end{cor}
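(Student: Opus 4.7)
For part (1), when $\pi$ is open the O-shadow diagram associated to $\pi$ collapses (i.e.\ $\tet$ and $\tet^*$ are isomorphisms and $\pi^* = \pi$), as noted following the construction of diagram (\ref{O}); substituting these identifications into (\ref{red}) yields (\ref{cor-diag}).

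For part (2), the crux is to show that $Y$ is trivial. The pushforward map $\eta_* : \Mf(\tilde X) \to \Mf(X)$ is a continuous surjection, so $K_\mu := \eta_*^{-1}(\mu)$ is a nonempty, compact, convex, $\Ga$-invariant subset of $\Mf(\tilde X)$. Pick by Zorn a $\Ga$-minimal closed subset $L \subset K_\mu$. Then $\pi_*(L) \subset \Mf(Y)$ is a minimal subsystem of $(\Mf(Y), \Ga)$, which is proximal by strong proximality of $(Y, \Ga)$; a minimal proximal system is a single point, so $\pi_*(L) = \{\ka\}$ for some $\Ga$-fixed $\ka \in \Mf(Y)$. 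Applying strong proximality once more to the orbit closure $\{\ka\}$ of this invariant measure forces $\ka = \del_{y_0}$, and minimality of $(Y, \Ga)$ then gives $Y = \{y_0\}$.

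With $Y$ trivial, $\pi$ is open, so by (1) the diagram reduces to (\ref{cor-diag}); in particular $Z$ is equicontinuous (since $\sig : Z \to \{*\}$ is isometric) and we have $\iota : \tilde X \to Z$. Its 1-1 points form a dense $G_\del$ of distal points of $\tilde X$, for any point proximal to such a point would share its image in the distal $Z$ and hence coincide with it. Strong proximality of $\eta$ implies $R_\eta \subset P$, so for each distal $\tilde x$ the fiber $\eta^{-1}(\eta(\tilde x))$ reduces to $\{\tilde x\}$; thus $\eta$ is almost one-to-one. Moreover, any two points sharing an $\eta$-fiber are proximal, hence share $\iota$-image in the distal $Z$, so $\iota$ descends through $\eta$ to a continuous $\Ga$-equivariant factor map, which by the same fiber computation is also almost one-to-one. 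This factor --- with slight abuse still denoted $\iota : X \to Z$ --- exhibits $X$ as almost automorphic.

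For uniqueness of $\mu$ and the measure isomorphism, let $Z_0 \subset Z$ be the 1-1 points of the descended $\iota : X \to Z$. By Theorem \ref{main}'s statement that $(\tilde X, \nu_*) \overset{\iota}{\cong} (Z, \la)$ is a measure theoretical isomorphism, the 1-1 points of $\iota : \tilde X \to Z$ already form a set of $\la$-measure $1$; these project to 1-1 points of the descended $\iota$, so $\la(Z_0) = 1$. Now $\iota_* \mu$ is an invariant probability measure on the uniquely ergodic equicontinuous $Z$, so $\iota_* \mu = \la$, and therefore $\mu(\iota^{-1}(Z_0)) = 1$. Hence $\iota$ is a Borel bijection on a set of full $\mu$-measure, which proves both that $\iota : (X, \mu, \Ga) \to (Z, \la, \Ga)$ is a measure theoretical isomorphism and that $\mu = (\iota^{-1})_* \la$ is uniquely determined. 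For (3), amenability of $\Ga$ provides by Markov-Kakutani a $\Ga$-invariant probability measure on the nonempty compact convex $\Mf(X)$, so (2) applies. The principal obstacle is the second paragraph: without amenability $K_\mu$ need not contain a $\Ga$-fixed measure, and the essential input is that strong proximality of $(Y, \Ga)$ lets one collapse any $\Ga$-minimal subset of $K_\mu$ to a single measure via $\pi$.
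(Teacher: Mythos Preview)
Your argument for part (1) is correct and matches the intended one-line observation.

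Part (2) contains a genuine gap. The assertion ``a minimal proximal system is a single point'' is false: for any non-amenable group $\Ga$, the action of $\Ga$ on its Furstenberg boundary is minimal and strongly proximal, yet nontrivial. What is true is that a proximal system has a \emph{unique} minimal subset; in $\Mf(Y)$ this unique minimal set is $\{\del_y : y\in Y\}\cong Y$, not a singleton. So from your $L\subset K_\mu$ you only obtain some $\tilde\nu\in K_\mu$ with $\pi_*(\tilde\nu)=\del_{y_0}$, hence $\mu=\eta_*(\tilde\nu)$ is supported on $\eta(\pi^{-1}(y_0))$, whence $\eta(\pi^{-1}(y_0))=X$. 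This does \emph{not} force $Y$ to be trivial: the configuration $\tilde X=X\times Y$ with $\eta,\pi$ the coordinate projections and section $\nu_y=\mu\times\del_y$ satisfies every hypothesis in sight (strongly proximal $\eta$, RIM $\pi$, strongly proximal $Y$) while $Y$ is arbitrary.

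What is missing is the one fact that makes the corollary immediate: the diagram in Theorem~\ref{main} is built from the \emph{specific} RIM shadow (\ref{RIM}) of the map $X\to\{*\}$ (see Theorem~\ref{RIM-shadow}), and this shadow collapses precisely when the original map is already RIM. Since an invariant measure $\mu$ on $X$ is exactly a section for $X\to\{*\}$, the shadow collapses to $\tilde X=X$, $Y=\{*\}$, $\eta=\id$. Then $\pi$ is trivially open, part~(1) applies, and the conclusions of Theorem~\ref{main} (uniqueness of the section and the measure-theoretic isomorphism $\iota$) specialize directly to give the statement. Your post-``$Y$ trivial'' work descending $\iota$ through $\eta$ is correct but becomes unnecessary once $\eta=\id$.

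Part (3) is fine.
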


Concerning the first assertion in Corollary \ref{cor} we pose the following.

\begin{prob}
Is a RIM homomorphism $\pi : X \to Y$
of minimal metric systems necessarily open ?
Is this true when, in addition, we assume that $X$ is tame ?
\end{prob}

\begin{rmk}
%
In the situation described in Corollary \ref{cor}.2,
the set $X_0= \{x\in X: \iota^{-1}(\iota(x)) = \{x\}\}$
is a dense $G_\del$ and $\Ga$-invariant subset
of $X$ and thus has $\mu$ measure either zero or one.
In \cite[Section 11]{KL} Kerr and Li construct a minimal Toeplitz
cascade (i.e. a $\Z$-system) which is tame and not null.
In \cite[Remark 5.2]{Gmt} I claimed that this example 
can be made not regular in the sense that the densities of
the periodic parts converge to $d < 1$. As for such non-regular systems
$\mu(X_0)=0$, this would show that the unique invariant measure of a minimal tame system need not be supported by the set $X_0$ where 
$\pi$ is 1-1. Unfortunately the argument I had in mind when claiming that the Kerr-Li example can be made nonregular was flawed.
Thus the following basic question is still open.
\end{rmk}

\begin{prob}
Let $(X, \Ga)$ be a metric minimal tame system.
Suppose that $(X, \Ga)$ admits
a (necessarily unique) invariant probability measure $\mu$.
Let $X \overset{\iota}{\to} Z$ be its maximal equicontinuous factor 
(so that  $\iota$ is an almost one-to-one extension). 
Let $X_0 \subset X$ be the dense $G_\del$ subset of $X$
defined by
$X_0= \{x\in X: \iota^{-1}(\iota(x)) = \{x\}\}$.
Is it necessarily the case that $\mu(X_0)=1$ ?
\end{prob}

%

\begin{rmk}
On the other hand, one may ask whether every
metric minimal almost automorphic system $(X, \Ga)$,
with $X \overset{\iota}{\to} Z$ being its maximal equicontinuous factor 
(so that  $\iota$ is an almost one-to-one extension), which admits a unique invariant
measure $\mu$ such that $\mu(X_0)=1$, 
is necessarily tame ?
The answer here is no. We can construct a minimal subshift $X
\subset \{0, 1\}^\Z$ as above, with $Z$ being an irrational rotation of the circle,
yet $(X,\sig)$ ($\sig$ denoting the shift) is not tame. We omit the details which will be published elsewhere.
\end{rmk}

\begin{rmk}
The following corollary of Theorem \ref{main} was suggested by the referee.
The notion of mean equicontinuity was studied in \cite{LTY} and \cite{DG}.
(A $\Z$-system is {\em mean equicontinuous} if for every 
$\ep >0$ there is $\del > 0$ such that if 
$d(x,y) < \del$ then $d(T^ix,T^iy) < \ep$ for all $i$ except for a set of density  $< \ep$.)
%
Combining Theorem 2.1 of \cite{DG} and  Theorem \ref{main} we deduce the following:

\begin{cor}
A minimal tame $\Z$-system is mean equicontinuous. 
\end{cor}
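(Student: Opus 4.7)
The plan is to deduce the corollary by combining the structural conclusion of Corollary \ref{cor} with the characterization of mean equicontinuity given in \cite{DG}. Since $\Ga = \Z$ is amenable, part (3) of Corollary \ref{cor} applies: any metric minimal tame $\Z$-system $(X,\sig)$ is in fact almost automorphic. That is, there is a factor map $\iota : X \to Z$ to an equicontinuous (monothetic, in the $\Z$-case) system $Z$, the map $\iota$ is almost one-to-one, and $(X,\sig)$ is uniquely ergodic with invariant measure $\mu$, while $\iota$ induces a measure-theoretic isomorphism $\iota : (X,\mu,\sig) \to (Z,\la,\sig)$, where $\la$ is Haar measure on $Z$.

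Next I would invoke the characterization of mean equicontinuity established in \cite{DG} (Theorem 2.1 there): a minimal metric $\Z$-system is mean equicontinuous if and only if it is an almost one-to-one extension of its maximal equicontinuous factor and, moreover, the factor map induces a measure-theoretic isomorphism between the unique invariant measure on $X$ and Haar measure on the equicontinuous factor. Our system satisfies both conditions by the previous paragraph — almost automorphy gives the almost one-to-one extension, and Corollary \ref{cor}.2 supplies precisely the required measure-theoretic isomorphism.

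A small but essential point to check is that the equicontinuous system $Z$ produced by the structure theorem really is the \emph{maximal} equicontinuous factor of $X$. This is automatic: the almost one-to-one extension $\iota : X \to Z$ is in particular a proximal extension, so $X$ and $Z$ have the same equicontinuous factors, and since $Z$ is itself equicontinuous it must coincide with the maximal equicontinuous factor of $X$. With this identification, the two hypotheses of the \cite{DG} criterion are met and we conclude that $(X,\sig)$ is mean equicontinuous.

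There is no serious obstacle here; the entire content is packaged in Corollary \ref{cor} together with Theorem 2.1 of \cite{DG}. The only conceptual point worth highlighting is that tameness of a $\Z$-system, which a priori is a purely topological (Rosenthal/Baire-class-one) condition on the enveloping semigroup, is actually strong enough to force the ergodic-theoretic rigidity required for mean equicontinuity — an implication whose content is supplied entirely by the structure theorem.
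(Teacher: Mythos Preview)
Your proposal is correct and follows essentially the same approach as the paper: the paper simply states that the corollary follows by combining Theorem 2.1 of \cite{DG} with Theorem \ref{main} (equivalently, with Corollary \ref{cor} in the amenable case), and you have spelled out exactly how those two results fit together. Your additional verification that the equicontinuous factor $Z$ coincides with the maximal equicontinuous factor is a helpful detail the paper leaves implicit.
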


\end{rmk}

\br

\section{Examples}
We illustrate our main theorem, Theorem \ref{main}, with some basic examples.
\begin{itemize}
\item
The classical Sturmian $\Z$ dynamical system $(X,T)$ 
(where $T : X \to X$ is the homeomorphism
which generate the $\Z$ action) 
is minimal and tame. This is an almost automorphic system
with $\tet : X \to Z$, an almost one-to-one factor, the projection of $X$ onto its 
largest equicontinuous factor $Z$,
the dyadic adding machine. See \cite{GM}.

\item 
Every {\bf null} dynamical system is tame; see \cite{HLSY} and \cite{KL}.
(A system is null if it has zero sequence topological entropy
with respect to every subsequence $n_i \nearrow \infty$.)

\item
The action of $G=GL(d,\R)$ (and hence of any
subsgroup $\Ga < G$) on the projective space $\mathbb{P}^{d-1}$,
comprising the lines through the origin in $\R^n$, is tame
(see the appendix below).
The same is true for the action of $G$ on $S^{d-1}$,
identified with the space of rays emanating from the origin.
We have a natural two-to-one map $\sig : S^{d-1} \to \mathbb{P}^{d-1}$.
This factor map corresponds to the diagram (\ref{cor-diag}) with
$X = \tilde{X} = Z = S^d$ and $Y = \mathbb{P}^{d-1}$. See \cite{F-B}, \cite{Ellis93} and \cite{Ak-98}.
\item
The action of the group $G = \Homeo(S^1)$ (and hence of any
subsgroup $\Ga < G$) on the circle $S^1$ is tame. See \cite{GM16}.

\item

Our next example will demonstrate the necessity for the presence of the extension $\eta$
in the diagram (\ref{cor-diag}). It is an elaboration of an example of Furstenberg and Glasner,
see \cite{G00}.
Let $G$ be the closed subgroup of the Lie group
$GL(4,\R)$ consisting of all $4 \times 4$ matrices of the form
\begin{equation*}
\begin{pmatrix}
A & 0 \\
0 & B
\endpmatrix
\qquad\text{and} \qquad
\pmatrix
0 & A \\
B & 0,
\end{pmatrix}
\end{equation*}
with $A,B\in GL(2,\R)$. We let $G$ act on the subspace $X$ of the
projective space $\mathbb{P}^3$ consisting of the disjoint union of the
two one dimensional projective spaces $\mathbb{P}^1$, which are
naturally embedded in $\mathbb{P}^3$, the quotient space of
$\R^4=\R^2\times \R^2$. 
Call these two copies $X_1$ and $X_2$ respectively.  
By \cite{Ak-98} this action is tame.

There is a natural projection from 
$(X,G)$ 
onto the
two points $G$-system $(\{X_1,X_2\},G)$. 
It is now easy to establish the remaining assertions of the following:

{\bf Claim 1}: The $G$-system $(X,G)$
is minimal and tame. It admits the isometric factor
which is the ``flip" on two points and the map from $X$ onto $(\{X_1,X_2\},G)$
is a strongly proximal extension. However, the system $(X,G)$ admits no
nontrivial proximal factor.

Let $Y$ be the ``quasifactor" of $(X,G)$ defined by:
$$
Y=\{\frac12(\del_{x_1}+ \del_{x_2}) : x_i \in X_i, \  i=1,2\} \subset \Mf(X).
$$
Again it is easy to check that the system $(Y,G)$ is 
minimal and strongly proximal $G$-system.  

Next let
$$
\tilde{X} = \{ (x_i,\frac12(\del_{x_1}+ \del_{x_2})) :  x_i \in X_i,\   i=1,2\}
\subset X \times \Mf(X). 
$$



{\bf Claim 2:} 
$\tilde{X}$ is minimal and tame.
Let $\eta : \tilde{X} \to X$ and $\sig : \tilde{X} \to Y$
denote the projections from $\hat{X}$ onto its two components. 
Then the diagram (\ref{cor-diag}) (with $Z = \tilde{X}$ and $\iota$ the identity map) is the canonical
standard extension describing the structure of the minimal tame system $X$. 
\end{itemize}

\br

\section{Appendix: Borel's density theorem}

In this short section we will rewrite Furstenberg's proof of Borel's density theorem \cite{F-B},
in terms of tame systems and enveloping semigroups.

As we will show next the action of $G=SL_d(\R)$ on 
the projective space $\mathbb{P}^{d-1}$ ($d \ge 2$) is tame. 
This is also true for the $G$ action on $S^{d-1}$.
In both cases the
enveloping semigroup is a Rosenthal compactum but not metrizable.
In the case of the projective space it is not even first countable (see \cite{Ak-98}).
Thus, these systems are tame but not HAE.


%
%
%

For a non-zero vector $v \in \R^d$ we let $\overline{v}$
 denote its image in $\mathbb{P}^{d-1}$.
Similarly, for $g \in G$ we let $\overline{g}$ denote its image in $E(\mathbb{P}^{d-1},SL_d(\R))$.

\begin{lem}\label{Flem}
The action of $GL_d(\R)$ on $\mathbb{P}^{d-1}$ is tame. 
\end{lem}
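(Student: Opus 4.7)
The plan is to verify the Baire class $1$ characterization of tameness (Theorem 0.3) by giving an explicit description of every element $p \in E(\mathbb{P}^{d-1}, GL_d(\R))$ using the singular value decomposition, in the spirit of Furstenberg's analysis in \cite{F-B}.

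Fix a net $g_\alpha \to p$ in $E(\mathbb{P}^{d-1}, GL_d(\R))$ and decompose $g_\alpha = k_\alpha a_\alpha k_\alpha'$ via SVD, with $k_\alpha, k_\alpha' \in O(d)$ and $a_\alpha$ positive diagonal; since the action on $\mathbb{P}^{d-1}$ is projective, I normalize so that $\max_i (a_\alpha)_{ii} = 1$. Passing to a subnet (which preserves convergence to $p$), arrange that $k_\alpha \to k$ and $k_\alpha' \to k'$ in $O(d)$, and $a_\alpha \to a$ entrywise in $[0,1]^d$. By a further subnet refinement, arrange that for every pair $(i,j)$ the ratio $(a_\alpha)_{ii}/(a_\alpha)_{jj}$ converges in $[0,\infty]$.

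This ratio convergence induces an equivalence relation on $\{1,\dots,d\}$ (by comparable rates of decay) with finitely many classes $I_1,\dots,I_s$, ordered so that indices in later classes decay strictly faster. Set $V_j = \mathrm{span}\{e_i : i \in I_j\}$ in the basis rotated by $k'$, so that $\R^d = V_1 \oplus \cdots \oplus V_s$. Within each $I_j$, pick a representative $i_j$ and define $\beta_i := \lim_\alpha (a_\alpha)_{ii}/(a_\alpha)_{i_j i_j} \in (0,\infty)$ for $i \in I_j$; let $D_j$ be the positive diagonal operator on $V_j$ with entries $\beta_i$. For $\bar v \in \mathbb{P}^{d-1}$, set $w = k' v$ and $J(\bar v) = \min\{j : \pi_{V_j}(w) \neq 0\}$. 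After dividing $a_\alpha w$ by the scale $(a_\alpha)_{i_{J(\bar v)} i_{J(\bar v)}}$, the $V_j$-contributions with $j < J(\bar v)$ vanish because $\pi_{V_j}(w)=0$, while those with $j > J(\bar v)$ vanish by faster decay; only the $V_{J(\bar v)}$-block survives, giving
$$
p(\bar v) \;=\; \overline{\,k \cdot D_{J(\bar v)} \cdot \pi_{V_{J(\bar v)}}(k'v)\,}.
$$

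This formula exhibits $p$ as continuous on each of the finitely many locally closed strata $S_J = \{\bar v : J(\bar v) = J\}$, defined by linear equalities on $V_1 \oplus \cdots \oplus V_{J-1}$ together with a nonvanishing condition on $V_J$. A function continuous on each of finitely many locally closed strata has preimages of open sets equal to finite unions of $F_\sigma$ sets, hence is Baire class $1$; thus $p$ is Baire class $1$ and the system is tame. The main obstacle is the combinatorial bookkeeping of singular value decay rates, handled by the iterated subnet refinement that linearizes every pairwise ratio and reduces the analysis to a finite stratification by a flag of coordinate subspaces.
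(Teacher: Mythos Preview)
Your proof has a genuine gap: the formula
$$
p(\bar v) \;=\; \overline{\,k \cdot D_{J(\bar v)} \cdot \pi_{V_{J(\bar v)}}(k'v)\,}
$$
need not hold. The problem is that the stratification is determined by the \emph{limiting} rotation $k'$, while the action of $a_\alpha$ is applied to $k'_\alpha v$, not to $k'v$. When $\pi_{V_1}(k'v)=0$ but $\pi_{V_1}(k'_\alpha v)\neq 0$, the dominant singular values can amplify the small $V_1$-component of $k'_\alpha v$ so that it does not disappear in the limit; the outcome then depends on the relative rates at which $k'_\alpha \to k'$ and $(a_\alpha)_{ii}/(a_\alpha)_{jj}\to 0$, and your subnet refinements do not control this interaction.

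Concretely, in $d=2$ take
$g_n=\begin{pmatrix} n & 0\\ 0 & 1\end{pmatrix}R_{1/n}$,
whose SVD has $k_n=I$, $a_n=\mathrm{diag}(n,1)$, $k'_n=R_{1/n}\to I$.
Your formula gives $p(\overline{e_2})=\overline{e_2}$, since $k'e_2=e_2$ lies in $V_2$. But $g_n e_2=(n\sin(1/n),\cos(1/n))\to(1,1)$, so actually $p(\overline{e_2})=\overline{(1,1)}$.

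The paper avoids this by not separating the SVD factors. It rescales the whole matrix: choose $\lambda_n$ with $\lambda_n g_n\to h$ nonzero, so $\overline{g_n v}\to\overline{h v}$ for $v\notin\ker h$; then repeat on $W_1=\ker h$ with a new rescaling of $g_n\rest W_1$. In the example above $h=\mathrm{diag}(1,0)$ with kernel $\mathrm{span}(e_2)$, and on that kernel $g_n e_2\to(1,1)$ directly, producing the correct answer. This iterated-kernel approach gives the same kind of flag stratification you were aiming for, but built from limits of $g_n$ restricted to successive subspaces rather than from the limiting SVD data, so the rate interaction never arises. (The paper then concludes tameness from sequential compactness of $E$ via the BFT dichotomy rather than by invoking the Baire class $1$ criterion, though either route works once the limit map is correctly identified.)
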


\begin{proof}
Let $g_n$ be a sequence of matrices in $GL_d(\R)$.
Given any subspace $W \subset \R^d$, by
passing to a subsequence and by choosing appropriate scalars $\la_n$, we can assume that
$\la_n g_n \to h_W$, a non-zero linear map from $W$ into $\R^d$.
For $v \not\in \ker(h_W)$ we have $\ol{\la_ng_n(v)} \to \ol{h_W(v)}$.

Now define $W_0 =\R^d$, $W_1 = \ker(h_{W_0})$, and repeat this procedure with
$W_1$ to obtain $h_{W_1}$ which maps $W_1$ onto a subspace of $\R^d$. 
Proceeding by induction (till we have a trivial kernel) we finally obtain a subsequence
such that $\ol{g_{n_i} }\to p \in E(\mathbb{P}^{d-1},SL_d(\R))$.
This shows that $(\mathbb{P}^{d-1}, G)$ is tame. 
\end{proof}

Note that for $W =\R^d$ the assumptions $g_n \in SL_d(\R)$ and $\|g_n\| \to \infty$, imply that $\dim \ker h_W \ge 1$. 
Thus, in this case the image of $p$ is
a finite union of projective sub-verieties of dimension $< d-1$ ({\em a quasi-sub-veriety}).



\begin{defn}
 A pair of groups $(G, H)$ is called a {\em Borel pair} if $G$ is a minimally almost periodic (m.a.p.)
group and $H$ is a closed subgroup of $G$ such that the quotient space $G/H$ supports a finite 
 $G$-invariant measure.
\end{defn}

 \begin{thm}[Borel's density theorem]
 Let $(G, H)$ be a Borel pair and $\pi$ a finite-dimensional representation of $G$ on a space $V$. 
 If $W$ is a $\pi(H)$-invariant subspace, it is also $\pi(G)$-invariant. 
 \end{thm}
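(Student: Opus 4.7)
The plan is to reduce the theorem via the Pl\"ucker embedding to the case of a fixed line in projective space, to construct a $\pi(G)$-invariant probability measure $\mu$ on that projective space from the finite invariant measure on $G/H$, and then, by induction on $\dim V$, to combine the tameness of the $GL(V)$-action on $\mathbb{P}(V)$ (Lemma \ref{Flem}) with the m.a.p.\ hypothesis on $G$ to force $\mu$ to be concentrated at a single point.

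Setting $k=\dim W$ and passing to the representation $\Lambda^k\pi$ on $\Lambda^k V$, the line $\Lambda^k W$ corresponds to a point $[v]\in \mathbb{P}(\Lambda^k V)$ fixed by $\Lambda^k\pi(H)$, and preservation of $[v]$ by $\Lambda^k\pi(G)$ is equivalent to $\pi(G)W=W$. So I reduce to the statement: if $[v]\in\mathbb{P}(V)$ is $\pi(H)$-fixed then $[v]$ is $\pi(G)$-fixed. Let $\mu\in\Mf(\mathbb{P}(V))$ be the push-forward of the finite $G$-invariant probability measure on $G/H$ under the (well-defined, $G$-equivariant) map $gH\mapsto \pi(g)[v]$; then $\mu$ is $\pi(G)$-invariant and $[v]\in\supp(\mu)$. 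Now argue by induction on $\dim V$. Let $V_0\subset V$ be the smallest subspace with $\supp(\mu)\subset \mathbb{P}(V_0)$; by canonicity, $V_0$ is $\pi(G)$-invariant. If $V_0\subsetneq V$ then the restriction of $\pi$ to $V_0$ is a representation for which $[v]\in\mathbb{P}(V_0)$ is still $\pi(H)$-fixed, and the inductive hypothesis on $\dim V$ applies. If $V_0=V$, I claim that $\pi(G)$ is relatively compact in $PGL(V)$; the m.a.p.\ hypothesis, applied to the continuous homomorphism $G\to\overline{\pi(G)}\subset PGL(V)$ into a compact Lie group, then forces $\pi(G)$ to consist of scalars, so it fixes every point of $\mathbb{P}(V)$, and in particular $[v]$.

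Verifying the compactness claim is the main obstacle, and is where tameness enters decisively. I would argue by contradiction: if $\pi(G)$ were unbounded in $PGL(V)$, pick a sequence $g_n\in G$ whose images have no convergent subsequence; scalar rescaling does not change the induced action on $\mathbb{P}(V)$, so the argument in the proof of Lemma \ref{Flem} allows one to pass to a subsequence so that $\pi(g_n)$ converges in the enveloping semigroup $E(\mathbb{P}(V),\pi(G))$ to an element $p$ whose image $p(\mathbb{P}(V))$ is contained in a finite union of proper projective subspaces $Y\subsetneq\mathbb{P}(V)$. By tameness (Lemma \ref{Flem}) and Theorem \ref{tame-inj}, the enveloping semigroup acts canonically on $\Mf(\mathbb{P}(V))$, so the invariance $\pi(g_n)_*\mu=\mu$ passes to the Baire-$1$ limit to give $p_*\mu=\mu$. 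But $p_*\mu$ is supported on $Y$, whence $\supp(\mu)\subset Y$, contradicting the assumption that $\supp(\mu)$ spans $\mathbb{P}(V)$. The delicate step is precisely this passage to the limit for measures: without tameness (and the resulting Theorem \ref{tame-inj}) one cannot guarantee that the Baire-$1$ limit $p$ preserves $\mu$, and the entire scheme collapses.
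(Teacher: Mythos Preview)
Your overall architecture mirrors the paper's (which is Furstenberg's): reduce via exterior powers to a line, push the invariant measure on $G/H$ to $\mu$ on $\mathbb{P}(V)$, and then use tameness together with Theorem~\ref{tame-inj} to let enveloping-semigroup elements act on $\mu$. The difference is organizational --- the paper works with a minimal idempotent $u$ and a \emph{minimal quasi-sub-variety} $L=\bigcup L_i$ containing $\supp\mu$, while you induct on $\dim V$ via the linear span $V_0$ of $\supp\mu$ and aim for relative compactness of $\pi(G)$ in $PGL(V)$.

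There is, however, a genuine gap in your compactness argument. From $p_*\mu=\mu$ you correctly obtain $\supp\mu\subset Y$, where $Y=p(\mathbb{P}(V))$ is a \emph{finite union} of proper projective subspaces. But this does \emph{not} contradict $V_0=V$: the linear span of a set contained in $\mathbb{P}(W_1)\cup\cdots\cup\mathbb{P}(W_k)$ can perfectly well be all of $V$. A concrete obstruction: on $\mathbb{P}^1$, the measure $\tfrac12(\delta_{[e_1]}+\delta_{[e_2]})$ has support spanning $\R^2$, yet its stabilizer in $PGL(2,\R)$ contains the full diagonal torus and is therefore non-compact; running your degeneration argument with $g_n=\mathrm{diag}(n,n^{-1})$ produces a limit $p$ with $p(\mathbb{P}^1)=\{[e_1],[e_2]\}\supset\supp\mu$, and no contradiction arises. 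So the implication ``$\supp\mu$ spans $\Rightarrow$ $\pi(G)$ relatively compact'' is false as stated, and your induction stalls precisely at the $V_0=V$ step.

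The paper closes this gap by working not with the linear span but with a minimal finite union $L=\bigcup L_i$ of projective subspaces containing $\supp\mu$. Such an $L$ is $G$-invariant, $G$ permutes the components $L_i$, and the m.a.p.\ hypothesis forces the permutation action to be trivial; one then reapplies the idempotent argument inside each $L_i$ to conclude that $G$ acts trivially on $L$, whence $[v]\in\supp\mu\subset L$ is $G$-fixed. To repair your proof you would need to replace the single subspace $V_0$ by this kind of finite $G$-invariant configuration, at which point you have essentially reproduced the paper's argument.
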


\begin{proof}
Suppose first that $\dim(W)=1$, say $W = \R v$, so that $x_0 = \ol{v} \in \mathbb{P}(V)$. 
Then, the map $g \mapsto \ol{gv}= gx_0$,
from $G/H$ into $\mathbb{P}(V)$, sends the invariant measure on $G/H$ to an invariant measure,
say $\mu$, on $\mathbb{P}(V)$. Let $Y$ be the support of $\mu$.
Clearly $x_0 \in Y$ 
and $Y$ is a closed $G$-invariant subset.
Let $u \in E(\mathbb{P}(V), G)$ be a minimal idempotent. Then (by tameness; see Theorem \ref{tame-inj})
$u_*\mu = \mu$ and it follows
 that $\mu(u\mathbb{P}(V))=1$.
By tameness there is a sequence $g_n \in G$ with $\ol{g_n} \to u$.
The range of $u$ can not be all of $\mathbb{P}(V)$ since this would mean that
the enveloping semigroup is actually a group and therefore, that 
the action is distal; however, this possibility is ruled out by the m.a.p. property of $G$.
Thus, the set $u\mathbb{P}(V)$ is a quasi-sub-veriety
(finite union of proper sub-varieties). 
In particular it is closed, whence $Y \subset  u\mathbb{P}(V)$.

Let $L = \bigcup L_i$ be a minimal projective quasi-sub-variety containing $Y$.
Then $L$ is closed and $G$-invariant and $G$ must permute its components. By m.a.p.
the permutations are all the identity permutations. Thus each $L_i$ is $G$-invariant.

If  the $G$-action on $L_i$ is not trivial then, again we conclude that $uL_i \subsetneq L_i$,
which contradicts the minimality of $L$. Thus $G$ acts on $L$ as the identity and, in particular,
$x_0 \in Y \subset L$ is a $G$-fixed point. 
%
%
%
%

The general case is reduced to the $1$-dimensional one using exterior products. 
\end{proof}

We refer the reader to \cite{Bo} and \cite{F-B} where it is shown how from this theorem one 
easily deduces the remaining results of \cite{Bo}.


\bibliographystyle{amsplain}

\end{document}